\NewDocumentCommand{\dslash}{s}{%
  \IfBooleanTF{#1}
    {\big/\mkern-7mu\big/}
    {/\mkern-6mu/}%
}
\newtheorem{definition}{Definition}[section]
\newtheorem{proposition}{Proposition}[section]
\newtheorem{lemma}[proposition]{Lemma}
\newtheorem{theorem}[proposition]{Theorem}
\newtheorem{remark}{Remark}[section]
\DeclareMathOperator{\Ker}{Ker}
\DeclareMathOperator{\Supp}{Supp}
\DeclareMathOperator{\Sing}{Sing}
\DeclareMathOperator{\Aut}{Aut}
\DeclareMathOperator{\Pic}{Pic}
\DeclareMathOperator{\Spec}{Spec}
\DeclareMathOperator{\rank}{rank}
\title{ \small \text{Views on level {$\mathit \ell$} curves, K3 surfaces and Fano threefolds} }
\author[ A. Garbagnati]{Alice Garbagnati}
\address{Dipartimento di Matematica 
Universit\'a degli Studi di Milano 
 }
  \email{alice.garbagnati@unimi.it }
\author[   A. Verra]{Alessandro Verra}
  \address{Dipartimento di Matematica \\ 
Universit\'a degli Studi di Roma Tre \\ 
 }
 \email{sandro.verra@gmail.com}
 \thanks{This work was partially supported by INdAM-GNSAGA and by the project  PRIN-2017 'Moduli Theory and Birational Classification'}
\begin{document}
 \maketitle 
 \begin{abstract}  An analogue  of the Mukai map $m_g: \mathcal P_g \to \mathcal M_g$ is studied for the moduli $\mathcal R_{g, \ell}$ of genus $g$ curves $C$ with a level $\ell$ structure. Let $\mathcal P^{\perp}_{g, \ell}$ be the moduli space of $4$-tuples $(S, \mathcal L, \mathcal E, C)$ so that $(S, \mathcal L)$ is a polarized K3 surface of genus $g$, $\mathcal E$ is orthogonal to $\mathcal L$ in $\Pic S$ and defines a standard degree $\ell$ K3 cyclic cover of $S$, $C \in \vert \mathcal L \vert$. We say that $(S, \mathcal L, \mathcal E)$ is a level $\ell$ K3 surface. These exist for $\ell \leq 8$ and their families are known. We define a level $\ell$ Mukai map $r_{g, \ell}: \mathcal P^{\perp}_{g, \ell} \to \mathcal R_{g, \ell}$,  induced by the assignment of $(S, \mathcal L, \mathcal E, C)$ to $ (C, \mathcal E \otimes \mathcal O_C)$.  We investigate a curious possible analogy between $m_g$ and $r_{g, \ell}$, that is, the failure of the maximal rank of $r_{g, \ell}$ for $g = g_{\ell} \pm 1$, where $g_{\ell}$ is
 the value of $g$ such that $\dim \mathcal P^{\perp}_{g, \ell} = \dim \mathcal R_{g,\ell}$. This is proven here for $\ell = 3$. As a related open problem we discuss Fano threefolds whose hyperplane sections are level $\ell$ K3 surfaces and their classification.

 \end{abstract}
 \section{Introduction}
Our aim is to convince the reader, showing a program and new results,  of the interest represented by some complex projective varieties whose curvilinear sections are canonical curves $C$ of genus $g$, endowed with a distinguished  nonzero  $\ell$-torsion element $\eta \in \Pic C$. 
 Often one says that $(C, \eta)$ is a \it level $\ell$ curve of genus $g$\rm, cfr. \cite{CEFS}. Fixing $(g, \ell)$ the moduli space of these pairs is integral, quasi projective and  denoted by  $\mathcal R_{g, \ell}$. 
 \par To enter further in the matter let us mention two other names from the title: \it $K3$ surface \rm and \it Fano threefold. \rm The $K3$ surfaces $S$ we consider are very special: they admit a non split cyclic cover of degree $\ell$, still birational to a K3 surface. This is defined by a line bundle $\mathcal O_S(E):= \mathcal E$ such that $h^0(\mathcal O_S(\ell E)) = 1$ and $h^0(\mathcal O_S(mE)) = 0$ for $m < \ell$.  The study of these surfaces stems from Nikulin's classification of K3 surfaces with an order $\ell$ symplectic automorphism and the classification implies $\ell \leq 8$, \cite{N1}. Since then several foundational results, in use here, did follow, cfr. \cite{ GP, GS1, G, GS2,  vGS}.  \par Now let $\mathcal L \in \Pic S$ be a genus $g$ polarization orthogonal to $\mathcal E$. Let $\eta := \mathcal O_C(E)$, where $C \in \vert \mathcal L \vert$ is smooth, then it turns out that $(C, \eta)$ is a level $\ell$ curve. We say that the triple $(S, \mathcal L, \mathcal E)$ is a  \it level $\ell$ K3 surface of genus $g$, \rm see definition (\ref{defnat}) for some precision.  Fixing $\ell$ the moduli of these triples are reducible for  infinitely many values of $g$. However a distinguished irreducible component exists for every $g$,  namely  the moduli space of triples $(S, \mathcal L, \mathcal E)$ such that $\Pic S$ is the sum of $\mathbb Z\mathcal L$ and its orthogonal lattice.  We  denote it by 
 \begin{equation} \mathcal F^{\perp}_{g, \ell}. \end{equation}
 Finally we come to the moduli space $\mathcal P^{\perp}_{g, \ell}$ of $4$-tuples $(S, \mathcal L, \mathcal E, C)$ such that $C \in \vert \mathcal L \vert$ and $(S, \mathcal L, \mathcal E)$ defines a point in ${\mathcal F}^{\perp}_{g, \ell}$.  Such a space is strictly related with the first topic considered in our paper. To introduce it let us define the 
\it level $\ell$ Mukai map. \rm  This is  the rational map 
\begin{equation} \label{lmukai}
r_{g, \ell}: \mathcal P^{\perp}_{g, \ell} \to \mathcal R_{g, \ell},
\end{equation}
 assigning the moduli point of the $4$-tuple $(S, \mathcal L, \mathcal E, C)$ to the moduli point of the pair $(C, \eta)$,  where $\eta$ is $\mathcal O_C(E)$.  Let $\mathcal P_g$ be the moduli space of triples $(S, \mathcal L, C)$, where $(S, \mathcal L)$ is a polarized K3 surface of genus $g$ and $C \in \vert \mathcal L \vert$,  then the previous name is motivated by the well known  Mukai map
\begin{equation} \label{mukai}
m_g: \mathcal P_g \to \mathcal M_g,
\end{equation}
 assigning the moduli point of the triple $(S, \mathcal L, C)$ to the moduli point of the curve $C$.  Some famous connections between canonical curves of genus $g$, K3 surfaces and Fano threefolds are well represented by $m_g$ and, in particular, by a curious variation of its rank. We recall that a rational map $f: X \to Y$ of integral varieties has \it maximal rank \rm if $\dim f(X) = \min \lbrace \dim X, \dim Y \rbrace$. \par Considering $m_g$ we recall that $\dim \mathcal P_g = 19 + g$ and $\dim \mathcal M_g = 3g-3$, therefore $\dim \mathcal P_g = \dim \mathcal M_g$ iff $g = 11$. Now $m_{11}$ is birational but, curiously, $m_g$ fails to be of maximal rank precisely before and after this transition value, that is, for $g = 11 \pm 1$. For the rest
$m_g$ is dominant for $g \leq 9$ and generically injective for $g \geq 13$. As is well known this anomaly is due to the presence behind the scene of some Fano varieties, whose curvilinear sections are general canonical curves of genus $11 \pm 1$, cfr.  \cite{CLM, Mu, Mu1, S}.\par A  task of this paper is to point out the same possible anomalies for the level $\ell$ Mukai maps $r_{g, \ell}$. The case $\ell = 2$ has already been done and it is an experimental origin to this work. If $\ell = 2$  we have $\dim \mathcal P^{\perp}_{g,2} =  \dim \mathcal R_{g,2}$ for $g = 7$. Then $r_{g,2}$ fails to be of maximal rank for $g = 7\pm 1$ and is birational for $g = 7$, \cite{FV, KLV1, Ve}. The 'Fano varieties behind the scene' for $g = 8$ and $g = 6$ are addressed or revisited in section 7. \par
In section 5 we summarize the question for each $\ell$. Let $g_{\ell}$ be the unique value of $g$ such that $\dim \mathcal P^{\perp}_{g, \ell} = \dim \mathcal R_{g, \ell}$, for $ l  = 2, \ 3, \ 4, \ 5, \ 6, \ 7, \ 8$ we respectively have:
\begin{equation}
g_{\ell} = 7 \ , \ 5 \ , \ 4 \ , \ 3 \ , \ 2 \ , \ 2 \ , \ 2.
\end{equation}
In this paper we present the following theorem, solving the question for $\ell = 3$.
\begin{theorem} Let $r_{g, 3}: \mathcal P^{\perp}_{g, 3} \to \mathcal R_{g,3}$ be the level $3$ Mukai map then:
\begin{enumerate}
\item $r_{4,3}$ has not maximal rank,
\item $r_{5,3}$ is birational,
\item $r_{6,3}$ has not maximal rank.
\end{enumerate}
\end{theorem}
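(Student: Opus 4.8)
The plan is to reduce the three statements to numerical assertions about $r_{g,3}$ and then establish them by a combination of parameter counts, a differential computation at a general point, and---for the balanced value $g=5$---an explicit reconstruction of the level $3$ K3 surface. First one records the dimensions: from the known description of the families of level $3$ K3 surfaces one has $\dim\mathcal F^\perp_{g,3}=7$, hence $\dim\mathcal P^\perp_{g,3}=g+7$ (a point of $\mathcal P^\perp_{g,3}$ is a surface in $\mathcal F^\perp_{g,3}$ together with a curve in the $g$-dimensional system $|\mathcal L|$), while $\dim\mathcal R_{g,3}=\dim\mathcal M_g=3g-3$. So $r_{g,3}$ runs from an $11$-, $12$-, $13$-dimensional space to a $9$-, $12$-, $15$-dimensional one for $g=4,5,6$ respectively. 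Consequently (1) is equivalent to: $r_{4,3}$ is not dominant, i.e. its image has dimension $\le 8$; (2) is equivalent to: $r_{5,3}$ is dominant and of degree $1$; and (3) is equivalent to: $r_{6,3}$ is not generically injective, i.e. its general fibre is positive-dimensional.

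For $g=4$ and $g=6$ the plan is to analyse the differential $\mathrm{d}r_{g,3}$ at a general point $(S,\mathcal L,\mathcal E,C)$. Using the fibration $\mathcal P^\perp_{g,3}\to\mathcal F^\perp_{g,3}$ with fibre $|\mathcal L|$, the tangent space of the source is assembled from the first-order deformations of $(S,\mathcal L,\mathcal E)$ and from $H^0(N_{C/S})$, where $N_{C/S}\cong\omega_C$ by adjunction; the tangent space of the target is $H^1(C,T_C)$, the deformations of the pair $(C,\eta)$ (the torsion class follows the curve, $\mathcal R_{g,3}\to\mathcal M_g$ being \'etale). One must show that for general input this map is not surjective when $g=4$ and not injective when $g=6$. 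The obstruction is controlled by the coboundary $H^0(\omega_C)\to H^1(T_C)$ of $0\to T_C\to T_S|_C\to N_{C/S}\to 0$ together with the image of the ambient deformations, and it reduces to a cohomological statement on the level $3$ K3 surface---involving the cohomology of $\mathcal E\otimes\mathcal O_C$ and a Gaussian/Wahl-type map on $C$---which one then verifies on a sufficiently general member of $\mathcal F^\perp_{g,3}$ using its lattice polarisation. Geometrically, the reason behind the scene is a family of (non-prime) Fano threefolds whose K3 sections are level $3$ K3 surfaces of genus $6$ and whose curvilinear sections are level $3$ curves: the curves of genus $4$ that do extend to a level $3$ K3 surface are confined to sections of such threefolds, which forces the drop in dimension.

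For $g=5$ the plan has two halves. Dominance: since source and target both have dimension $12$, it suffices to exhibit one point at which $\mathrm{d}r_{5,3}$ is injective, which follows from a Noether--Lefschetz-type transversality or from the same differential analysis with the opposite outcome; then $r_{5,3}$ is dominant and generically finite. Degree one: from a general level $3$ curve $(C,\eta)$ of genus $5$ one reconstructs $(S,\mathcal L,\mathcal E)$ canonically. Here $C$ is canonically the transverse intersection of three quadrics in $\mathbb P^4$, and $\eta$ determines a connected \'etale triple cover $\pi\colon\widetilde C\to C$ with $\widetilde C$ of genus $13$; the cyclic triple cover $\widetilde S\to S$ attached to $\mathcal E$ is a genus $13$ K3 surface carrying an order $3$ symplectic automorphism $\sigma$, and $\widetilde C=\pi^{-1}(C)$ lies on it. A curve of genus $13$ lying on a K3 surface lies on a unique one---the analogue of Mukai's rigidity at genus $11$, here just past the transition value---so $\widetilde S$ is recovered from $\widetilde C$, hence $S=\widetilde S/\langle\sigma\rangle$ and the whole $4$-tuple is recovered from $(C,\eta)$. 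This gives $\deg r_{5,3}=1$, so $r_{5,3}$ is birational.

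I expect the main obstacle to be the differential computation for $g=4$ and $g=6$: what is needed is the exact corank (respectively nullity) of $\mathrm{d}r_{g,3}$ at a general point, and a naive count only gives the inequality in the wrong direction, so one genuinely has to locate the cohomology that obstructs maximal rank---equivalently, describe the Fano threefolds behind the scene precisely enough to count with them. For $g=5$ the delicate point is the reconstruction step: one has to make sure that the level $3$ K3 surface, and not merely some K3 surface, is recovered, i.e. that the order $3$ symplectic automorphism $\sigma$ of $\widetilde S$ is forced by $\widetilde C$ together with the covering data, so that no ambiguity survives on passing back to the quotient.
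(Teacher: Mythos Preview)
Your proposal is a strategy outline rather than a proof, and it diverges substantially from the paper's arguments in all three parts.

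For (1), the paper does \emph{not} compute the differential of $r_{4,3}$. Instead it observes a direct obstruction: for any $[S,\mathcal L,\mathcal E,C]\in\mathcal P^{\perp}_{4,3}$ the multiplication map
\[
\mu_C\colon H^0(\omega_C\otimes\eta)\otimes H^0(\omega_C\otimes\eta^{-1})\longrightarrow H^0(\omega_C^{\otimes 2})
\]
cannot be surjective, simply because $h^1(\mathcal O_S(C-B_{\mathsf{red}}))\neq 0$ when $g<t-1=5$. Since the locus in $\mathcal R_{4,3}$ where $\mu_C$ fails to be an isomorphism is a proper divisor (by the Torsion Bundle Conjecture~B of Chiodo--Eisenbud--Farkas--Schreyer), the image of $r_{4,3}$ sits in that divisor. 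This is far shorter than any Wahl-map computation and uses no deformation theory.

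For (3), the paper again avoids the differential entirely: it constructs, for general $[S,\mathcal L,\mathcal E,C]$, an explicit pencil $\{S'_t\}_{t\in\mathbb P^1}$ of level $3$ K3 surfaces of genus $6$ all passing through $C$, obtained from cubic sections of a smooth quadric in $\mathbb P^4$ through the image of $C$ under $|C-E|$. This gives a curve in the fibre of $r_{6,3}$ directly.

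For (2), your reconstruction route through the triple cover $\tilde C$ of genus $13$ has a genuine gap. The generic injectivity of $m_{13}$ concerns \emph{general} triples $(S,\mathcal L,C)\in\mathcal P_{13}$; here $\tilde S$ has Picard rank at least $13$ and $\tilde C$ is far from general, so you cannot invoke that result. Even granting uniqueness of $\tilde S$, you would still need to show that the order $3$ deck transformation of $\tilde C$ extends to $\tilde S$ as the given symplectic automorphism, which is the very point you flag as delicate and do not resolve. The paper's reconstruction is completely different and intrinsic to $(C,\eta)$: it shows that $\mu_C$ is surjective, so that $C$ sits in $\mathbb P^{11}\cap(\mathbb P^3\times\mathbb P^3)\subset\mathbb P^{15}$ via the Segre embedding of $|\omega_C\otimes\eta|^*\times|\omega_C\otimes\eta^{-1}|^*$; this linear section $\Sigma$ is proved to be a proper intersection and hence a K3 surface canonically isomorphic to $S$. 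The surface is thus recovered from $\mu_C$ alone, with no appeal to $\tilde C$ or to $m_{13}$.
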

  The image of $r_{4,3}$ is contained in a divisor of $\mathcal R_{4,3}$, parametrizing pairs $(C, \eta)$ such that the multiplication map $\mu: H^0(\omega_C \otimes \eta) \otimes H^0(\omega_C \otimes \eta^{-1}) \to H^0(\omega_C^{\otimes 2})$ is not an isomorphism. This case seems interestingly related to the $G_2$-variety, see \cite{Mu} and section 7. \par
  The proof of (3) is sketched here and it will appear elsewhere. The image of $r_{6,3}$ parametrizes pairs $(C, \eta)$, where $C$ is a curvilinear section of a suitable Gushel - Mukai threefold singular along a rational normal sextic curve, see section 7. \par
Let $(S, \mathcal L, \mathcal E)$ be a level $\ell$ K3 surface of genus $g$ and $\phi: S \to \mathbb P^g$ the morphism defined by $\mathcal L$, we assume for simplicity that $\phi$ is birational onto
$\overline S := \phi(S)$. Then we close this introduction with few lines addressing the \it classification of Fano threefolds \rm  $$ \overline X \subset \mathbb P^{g+1}$$ whose general hyperplane sections are projective models  $\overline S$ as above.   The problem sounds similar to that of classifying threefolds $T \subset \mathbb P^g$ whose hyperplane sections are Enriques surfaces, that is, Enriques-Fano threefolds. It seems however quite neglected. \par  Some examples of threefolds $\overline X$ appear in this paper, most are normal and $\Sing \overline X$ is a curve. Moreover $\overline X$ admits a cyclic cover $\pi: \tilde X \to \overline X$, branched exactly on $\Sing \overline X$.  A basic notion of level $\ell$ polarized projective variety $(X, \mathcal L, \mathcal E)$ is introduced in the next section, since it is useful in the cases we want to consider. \par
 We wish to thank the referee for the careful reading and the useful advice. 
 \section*{Acknowledgement}
\it We are happy of contributing to this volume, celebrating Professor Fabrizio Catanese on the occasion of his Seventies. Let us wish to him abundance in mathematics and life as always.
\rm 
\section{Some preliminaries}
 In what follows $X$ is a smooth, irreducible complex projective variety and $\mathcal L$ is a big and nef line bundle on $X$, we say that $(X, \mathcal L)$ is a polarized projective variety. On the other hand we are  interested, along this paper, in some families of cyclic coverings 
 \begin{equation}
\pi: \tilde X \to X.
\end{equation}
Then we fix our conventions about, \cite{EV}, \cite{L} I p.242. By definition $\pi$ is a finite morphism of degree $\ell \geq 2$ and it is the quotient map of the action of an automorphism of order $\ell$ of $\tilde X$. We assume that $\tilde X$ is normal, up to  composing  $\pi$ with the normalization map.  Hence $\tilde X$ is reduced with irreducible connected components. Starting from $\pi$, we briefly  review
the recipe for its construction. Notice that $\pi_* \mathcal O_{\tilde X} \cong \mathcal A$, where
\begin{equation}
 \mathcal A = \label{mathcal A}\mathcal O_X \oplus \mathcal E^{-1} \oplus \dots \oplus \mathcal E^{-\ell+1}
 \end{equation}  and $\mathcal E \in \Pic X$. Assume $\tilde X$ is connected and hence irreducible. Then $\pi$ defines the field extension $\pi^*: k(X) \to k(\tilde X)$ and its trace map induces
 the exact sequence
 \begin{equation}
 0 \to {\mathcal E}^{- \ell} \stackrel { s} \to \mathcal O_X \to \mathcal O_B \to 0,
 \end{equation}
 for some $s \in H^0(\mathcal E^{ \ell})$. The multiplication by $s$ defines a structure of $\mathcal O_X$-Algebra on $\mathcal A$.
 We have $\tilde X = \Spec \mathcal A$, moreover $\pi$ factors through the projection 
$ u:  \mathbb P(\mathcal A) \to X$. The branch divisor of $\pi$ is ${\rm div}(s)$ and will be  denoted by  $B$. For $B$ we fix the notation
 \begin{equation}
B = m_1B_1 + \dots + m_rB_r,
\end{equation}
where $B_1, \dots, B_r$ are prime divisors.  Conversely, a pair $(\mathcal E, B)$ such that $B \in \vert \mathcal E^{\ell} \vert$ defines on $\mathcal A$ an $\mathcal O_X$- Algebra structure as above and a cyclic cover $\pi$. Notice that the condition $g.c.d. (\ell, m_1, \dots, m_r) = 1$ implies the irreducibility of $\tilde X$. \par 
Now let $C$ be a reduced curve and $\eta \in \Pic C$ a  nontrivial  $\ell$-torsion element.  Then $(C, \eta)$ uniquely defines, using a nonzero vector $s \in H^0(\eta^{\ell})$, a  nonramified  cyclic cover
$$
\pi: \tilde C \to C,
$$
which is  nontrivial.  To give a pair $(C, \pi)$ is equivalent to give a singular level $\ell$ curve $(C, \eta)$.   Now recall that a  curve $C \subset X$  is \it mobile \rm  if moves in an irreducible algebraic family covering $X$, with \it integral \rm general member. In the Neron-Severi group $N_1(X) \otimes_{\mathbb Z} \mathbb R$ the  \it mobile classes \rm of such curves generate an important convex cone,  \cite{BDPP} 1.3 (vi), \cite{L} II p. 307. Finally we introduce the following definition.
  \begin{definition}  Let $\mathcal E \in \Pic X$, the pair $(X, \mathcal E)$ is a level $\ell$ structure on $X$ if:  \par
  $\circ$ $\vert \mathcal E^{\ell} \vert \neq \emptyset$ and a general $B \in \vert \mathcal E^{\ell} \vert$ defines an integral cyclic cover, \par
  $\circ$ there exists a mobile curve $C$ in  $X$ such that $C B = 0$.  
    \end{definition}
Assume $\dim X = 1$ then $X$ is the smooth, integral curve $C$ and $\mathcal E$ is a line bundle of degree $0$ such that $\mathcal E^{\ell} \cong \mathcal O_C$. Moreover we are assuming that the cover $\pi: \tilde C \to C$
defined by $\mathcal E$ is integral. Hence $\mathcal E$ is a  nontrivial  $\ell$-torsion element. Then, for curves, the definition is the traditional one.   In higher dimension the next property is clear.
\begin{proposition} Let $(X, \mathcal E)$ be a level $\ell$ structure on $X$ and $C \subset X$ a mobile curve such that $CE = 0$, where $\mathcal O_X(E) \cong \mathcal E$.  Then 
$\mathcal O_C(E)$ is an $\ell$-torsion element of $\Pic C$.
\end{proposition}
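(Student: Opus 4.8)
The plan is to reduce the statement to the restriction, to a general member of the mobile family, of the section defining the cyclic cover. Write $\mathcal E = \mathcal O_X(E)$, let $B \in \vert \mathcal E^{\ell}\vert$ be general, so that $B$ is the branch divisor of an integral cyclic cover by the definition of a level $\ell$ structure, and fix $s \in H^0(\mathcal E^{\ell})$ with $\mathrm{div}(s) = B$. Since $B$ is linearly equivalent to $\ell E$ and intersection numbers depend only on the numerical class, $C \cdot B = \ell\,(C\cdot E) = 0$.

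First I would dispose of the case $B = 0$: then $\mathcal E^{\ell} \cong \mathcal O_X$, hence $\mathcal O_C(\ell E) = \mathcal E^{\ell}\vert_C \cong \mathcal O_C$ and there is nothing to prove. So assume $B \neq 0$; then $\Supp B$ is a proper closed subset of the irreducible variety $X$. Since $C$ is mobile, the members of its family cover $X$, so they cannot all lie in $\Supp B$; therefore a general member $C$ satisfies $C \not\subset \Supp B$, and it is integral by hypothesis. It is harmless to pass to such a general $C$, the numerical condition $C\cdot E = 0$ being preserved.

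Now restrict $s$ to $C$. As $C \not\subset \Supp B$, the section $s\vert_C \in H^0(\mathcal O_C(\ell E))$ is nonzero, and its divisor $B\vert_C$ is an effective divisor on the integral projective curve $C$ of degree $C \cdot B = 0$; hence $B\vert_C = 0$, i.e. $s\vert_C$ is nowhere vanishing. Equivalently, the injection $\mathcal O_C \hookrightarrow \mathcal O_C(\ell E)$ given by multiplication by $s\vert_C$ has cokernel supported on a length-zero subscheme, so it is an isomorphism and $\mathcal O_C(\ell E) = \mathcal O_C(E)^{\otimes \ell} \cong \mathcal O_C$. Thus $\mathcal O_C(E)$ is an $\ell$-torsion element of $\Pic C$; moreover the same computation identifies $\pi^{-1}(C) \to C$ with the \'etale cyclic cover attached to $(C, \mathcal O_C(E))$.

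No serious obstacle arises, consistently with the remark that the property is clear. The only point needing a word of care is the choice of $C$ avoiding $\Supp B$, which is precisely what mobility guarantees, combined with the elementary fact that a degree-zero effective divisor on an integral projective curve vanishes.
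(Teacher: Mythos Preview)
Your proof is correct and follows essentially the same approach as the paper: pick a divisor $D \in \vert \mathcal E^{\ell} \vert$, use mobility of $C$ to arrange $C \not\subset \Supp D$, and then observe that the effective degree-zero divisor $D\vert_C$ must vanish, giving $\mathcal O_C(\ell E) \cong \mathcal O_C$. The paper's version is terser and omits the (vacuous here) case $B = 0$ and the remark on the restricted cover, but the core argument is identical.
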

\begin{proof}   Consider  $D \in \vert \mathcal E^{\ell} \vert$. Since $C$ is movable we can assume that $C$ is not a component of  $D$. Then $C \cap D$ is empty because $CE= 0$. This implies that $\mathcal E^{\ell} \otimes \mathcal O_C \cong \mathcal O_C(D) \cong \mathcal O_C$.  
\end{proof}
\begin{remark} \rm Nevertheless we may have a trivial $\mathcal O_C(E)$ even when $\mathcal E$ is not, and even generically when $C$ moves in its family. This is obvious if $C$ is  smooth and rational. Furthermore consider a curve $F$ and the projection $p: F \times X \to X$. Then $(F \times X, p^* \mathcal E)$ is a level $\ell$-structure on $F \times X$ and $p^* \mathcal E$ is trivial on the mobile curve $p^*(x)$, $x \in X$.
\end{remark}
Then, to address the concrete topics of our paper, we turn to polarized pairs $(X, \mathcal L)$ and we denote by $d$ the dimension of $X$. We assume that $\vert \mathcal L^{m} \vert$ is globally generated for $m >>0$ and observe that a general complete intersection of $d-1$ elements of $\vert \mathcal L^{m} \vert$ is a smooth, integral mobile curve, which
moves in an irreducible family $\mathcal C_m$ of transversal complete intersections in $X$.   \begin{proposition}  Let $X, \ \mathcal L, \ \mathcal E$ be as above. Assume $C E = 0$, where $C \in \mathcal C_m$ and $\mathcal O_X(E) \cong \mathcal E$.
Then $\mathcal O_C(E)$ is a  nontrivial  $\ell$-torsion element of $\Pic C$, moreover  $$ h^0(\mathcal O_X(kE)) = 0, \ \ k \not \equiv 0 \mod \ell. $$
 \end{proposition}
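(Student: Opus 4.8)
The plan is to recognise $\OO_C(E)$ as the torsion line bundle defining the restriction to $C$ of the cyclic cover attached to $\mathcal E$, and then to run a Bertini-type irreducibility argument on that cover. Throughout, set $d=\dim X$; the case $d=1$ is the classical one recalled above (there $C=X$ and the statement amounts to $\mathcal E$ being $\ell$-torsion of order exactly $\ell$, which is the integrality of the cover), so assume $d\ge 2$.

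First I would fix a general $B\in\vert\mathcal E^{\ell}\vert$. Since $(X,\mathcal E)$ is a level $\ell$ structure, the associated degree $\ell$ cyclic cover $\pi\colon\tilde X\to X$, with $\pi_*\OO_{\tilde X}=\OO_X\oplus\mathcal E^{-1}\oplus\dots\oplus\mathcal E^{-\ell+1}$, is integral; recall that $\pi$ is finite and flat and is étale over $X\setminus B$. The family $\mathcal C_m$ covers $X$ while $\Supp B$ is a proper closed subset, so a general $C\in\mathcal C_m$ is not contained in $\Supp B$; then $C\cap B$ is a finite scheme of degree $C\cdot B=\ell\,(C\cdot E)=0$, hence $C\cap B=\emptyset$. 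In particular $\pi$ is étale in a neighbourhood of $C$, and $\mathcal E^{\ell}\otimes\OO_C\cong\OO_X(B)\otimes\OO_C\cong\OO_C$; thus $\OO_C(E)$ is an $\ell$-torsion element of $\Pic C$ (in agreement with the preceding proposition).

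The heart of the matter is to prove that $\tilde C:=\pi^{-1}(C)$ is irreducible for a general $C\in\mathcal C_m$. By flat base change, $\tilde C=\tilde X\times_X C\cong\Spec_C\big(\OO_C\oplus(\mathcal E^{-1}\otimes\OO_C)\oplus\dots\oplus(\mathcal E^{-\ell+1}\otimes\OO_C)\big)$, with algebra structure induced by the nowhere vanishing section $s\vert_C$; so $\tilde C\to C$ is a reduced, degree $\ell$ étale cyclic cover of $C$, namely the one determined by the torsion line bundle $\OO_C(E)$. Such a cover is connected, hence irreducible, if and only if $\OO_C(E)$ has order exactly $\ell$ in $\Pic C$, and in that case $\OO_C(E)$ is in particular nontrivial. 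To get the irreducibility of $\tilde C$, consider the morphism $f:=\phi\circ\pi\colon\tilde X\to\mathbb P^N$, where $\phi$ is the morphism defined by $\vert\mathcal L^m\vert$. Since $\mathcal L$ is big, $\dim\phi(X)=d$, and $\tilde X$ is irreducible, so $\overline{f(\tilde X)}=\phi(X)$ is irreducible of dimension $d\ge 2$. Bertini's irreducibility theorem then yields, after $d-1$ successive general hyperplane sections, that $f^{-1}(H_1\cap\dots\cap H_{d-1})$ is irreducible; for a general choice of the $H_i$ this scheme equals $\pi^{-1}(C)$ with $C=\phi^{*}H_1\cap\dots\cap\phi^{*}H_{d-1}$ a general member of $\mathcal C_m$. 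Therefore $\OO_C(E)$ has order exactly $\ell$; in particular it is a nontrivial $\ell$-torsion element of $\Pic C$.

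It remains to prove the vanishing. Suppose $h^0(\OO_X(kE))\ge 1$ for some $k\not\equiv 0\pmod{\ell}$, and pick an effective $D\in\vert kE\vert$. Since $C$ moves in a family covering $X$, I may assume $C\not\subset\Supp D$, so $\OO_C(kE)\cong\OO_C(D\cap C)$ is effective of degree $k\,(C\cdot E)=0$, whence $D\cap C=\emptyset$ and $\OO_C(kE)\cong\OO_C$. As $\OO_C(E)$ has order exactly $\ell$, this forces $\ell\mid k$, a contradiction. Hence $h^0(\OO_X(kE))=0$ whenever $k\not\equiv 0\pmod{\ell}$. The only genuine difficulty is the irreducibility of $\pi^{-1}(C)$ for a general curvilinear section; once that is settled everything else is formal, the one point requiring care being to combine the genericity of $B$ (making $\tilde X$ integral) with that of $C$ (making $C$ disjoint from $B$ and $\pi^{-1}(C)$ reduced and irreducible).
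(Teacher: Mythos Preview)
Your proof is correct and follows essentially the same route as the paper: both arguments reduce to showing that $\pi^{-1}(C)$ is connected (equivalently irreducible, since the cover is \'etale over $C$) for a general curvilinear section $C$, and both obtain this via a Bertini/connectedness theorem applied to the composite $\tilde X \to X \to \mathbb P^N$. The only cosmetic difference is in the vanishing step: the paper runs an induction on $\dim X$ through a smooth $D \in \vert \mathcal L^m \vert$, whereas you deduce $h^0(\mathcal O_X(kE)) = 0$ directly from the fact that $\mathcal O_C(E)$ has exact order $\ell$, which is slightly more economical.
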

 \begin{proof} By induction on $d = \dim X$. Let $d = 1$ then $X = C$ and $\lbrace C \rbrace = \mathcal C_m$. Since $\mathcal E$ defines an integral cover, the statement follows.  Let $d \geq 2$ and  $C = D_1  \cdot\ldots \cdot  D_{d-1}$, where $D_1, \ldots, D_{d-1} \in \vert \mathcal L^{m} \vert$, then a general $D$ in the linear system generated by $D_1 \dots D_{d-1}$  is smooth. $\mathcal O_D(D)$ is nef, big and globally generated.  Let $\pi: \tilde X \to X$ be the cyclic cover, branched on $B$,  we have, since $C$ is mobile and $CB = 0$ we can assume $C \cap B = \emptyset$. Now let $f: X \to \mathbb P^n$ be the morphism defined by $\vert D \vert$, then $f$ is generically finite onto its image and the same is true for $f \circ \pi: \tilde X \to \mathbb P^n$. Then $\tilde C = \pi^{-1}(C)$ is connected by the connectedness theorem and $\mathcal O_C(E)$ is non trivial of $\ell$-torsion in $\Pic C$. Moreover $(D, \mathcal O_D(E))$ is a level $\ell$ structure and the second statement follows by induction on $d$.   \end{proof} 
Keeping this notation we finally come to the following definition.
\begin{definition}\label{DEFI} A level $\ell$ polarized variety is a triple $(X, \mathcal L, \mathcal E)$ such that $(X, \mathcal E)$ is a level $\ell$ structure on $X$ and
$CE = 0$, where $C \in \mathcal C_m$. \end{definition}
Actually the triples $(X, \mathcal L, \mathcal E)$ we will consider always satisfy the additional property: \medskip \par  \it  $\vert \mathcal L \vert$ is base point free and defines a birational morphism onto its image
\begin{equation}
f: X \to \mathbb P^n.
\end{equation}
\rm
Hence we assume $C = H_1 \cap \dots \cap H_{d-1} \in \mathcal C_1$, where $H_1 \dots H_{d-1} \in \vert f^* \mathcal O_{\mathbb P^n}(1) \vert$. So $C$ shows the distinguished line bundles  
$ \eta_C := \mathcal E \otimes \mathcal O_C$ and $\mathcal L_C :=  \mathcal L \otimes \mathcal O_C$ and these lead us to the varieties we are interested in. For these $\mathcal L_C$ is the canonical sheaf $\omega_C$. For the triples considered, we will also have that the restriction  $ r: H^0(\mathcal L) \to H^0(\omega_C)$ is surjective and that $\overline X := f(X)$ is normal.
  So we are going to deal with projective varieties $\overline X$ whose curvilinear sections are canonical curves $C$, endowed with the \'etale cover defined by $\eta_C$. This includes K3 surfaces and Fano threefolds with a prescribed level $\ell$ structure. 
 \section{ Level $\ell$ K3 surfaces} 
We begin discussing the families of level $\ell$ polarized $K3$ surfaces $(S, \mathcal L, \mathcal E)$ and the chances that $C \in  \vert \mathcal L \vert$ be  a curve with general moduli. We say that $C^2 = 2g - 2$ is the \it degree \rm of $(S, \mathcal L)$ and $g$ its \it genus\rm. As usual the moduli space of $(S, \mathcal L)$ is  denoted by 
\begin{equation}
\mathcal F_g,
\end{equation}
it is an integral quasi projective variety of dimension $19$. Let $[S, \mathcal L] \in \mathcal F_g$ be a general point, we recall that then $\Pic S \cong \mathbb Z \mathcal L$ and $\vert \mathcal L \vert$ defines an embedding
 \begin{equation}
 f: S \to \mathbb P^g
 \end{equation}
for $g \geq 3$. Coming to level $\ell$ structures $(S, \mathcal L, \mathcal E)$, these properties are no longer satisfied, as we are going to recall. We fix our notation as follows, the map
 \begin{equation}
\pi': \tilde S' \to S
\end{equation}
is the covering morphism defined by $\mathcal E$. As already established its branch divisor is
$$
B = m_1B_1 + \dots + m_rB_r,
$$
where $B_1, \dots, B_r$ are the irreducible components of $\Supp B$. Of course, since $\Pic S$ has no torsion, $B$ is not zero. We fix the following convention: 
\begin{itemize} \it
\item[$\circ$] $r$ is the number of irreducible components of $\Supp B$, 
\item[$\circ$] $t$ is the number of its  connected components.  
\end{itemize}
Moreover we set
\begin{equation}
B_1 + \dots + B_r = B_{\mathsf {red}} = N_1 + \dots + N_t,
\end{equation}
where $N_1 \dots N_t$ denote the connected components of $\Supp B$. Notice that $CB_i = 0$ for $i = 1 \dots r$. Indeed $C$ is integral and $\dim \vert C \vert \geq 1$ so that $CB_i \geq 0$. Since
$B \in \vert \ell E \vert$ then $CB = 0$ and this implies $CB_i = 0$. Then, applying the Hodge Index Theorem, $B_i$ is an integral curve on $S$ with $B^2_i < 0$. Hence $B^2_i = -2$ and $B_i$ is $\mathbb P^1$. The same argument applies to $N_j$ which is a reduced connected curve of arithmetic genus $0$. In particular each $N_j$ is contracted by $f$ to a quadratic singularity and $\Pic S$ is not isomorphic to $\mathbb Z$. \par
It is not difficult to see that the Kodaira dimension of $\tilde S'$ is zero, moreover, with some elaboration, one has the following property, cfr. \cite{G}, \cite{N1}.
\begin{proposition} Either $\tilde S'$ is birational to a K3 surface or to an abelian surface. \end{proposition}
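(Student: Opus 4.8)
The plan is to produce a smooth model of $\tilde S'$ of Kodaira dimension $0$, observe that it carries a nonzero holomorphic $2$-form, and then invoke the Enriques--Kodaira classification to leave only the K3 and abelian cases. First I would fix a resolution of singularities $\nu : \tilde S \to \tilde S'$; since Kodaira dimension is a birational invariant of smooth models and $\kappa(\tilde S')=0$ by the remark preceding the statement, $\tilde S$ has Kodaira dimension $0$. Next I would show $p_g(\tilde S)\ge 1$ directly: the composition $\pi'\circ\nu : \tilde S \to \tilde S' \to S$ is a generically finite (degree $\ell$) morphism of smooth surfaces, so the pull-back of the nowhere-vanishing holomorphic $2$-form of the K3 surface $S$ is a holomorphic $2$-form on $\tilde S$, not identically zero because the morphism is dominant; hence $h^0(\tilde S,\omega_{\tilde S})\ge 1$. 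Equivalently, the $\mu_\ell$-invariant part of $\pi'_*\omega_{\tilde S'}$ is $\omega_S\cong\mathcal O_S$, which again forces $p_g\ge 1$.

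Now a smooth projective surface of Kodaira dimension $0$ is birational to exactly one of a K3 surface, an abelian surface, an Enriques surface, or a bielliptic surface. The last two have $p_g=0$, which contradicts $p_g(\tilde S)\ge 1$. Therefore $\tilde S$, and hence $\tilde S'$, is birational to a K3 surface or to an abelian surface, which is the assertion.

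I would expect the only real work to lie in the input taken for granted here, namely the verification (carried out in \cite{N1}, \cite{G}) that $\tilde S'$ has at worst rational double points and that consequently $\kappa(\tilde S')=0$: this is a local analysis of the normalized cyclic cover over the $ADE$ configuration $\Supp B$ of $(-2)$-curves obtained above, and it is exactly where the admissible multiplicities $m_i$ and the bound $\ell\le 8$ are constrained. The same eigensheaf decomposition $\pi'_*\mathcal O_{\tilde S'}=\bigoplus_{j=0}^{\ell-1}\mathcal E^{-j}\bigl(\lfloor jB/\ell\rfloor\bigr)$, pushed a little further, would also pin down which of the two cases occurs: the abelian one arises precisely when some non-invariant summand $\mathcal E^{-j}\bigl(\lfloor jB/\ell\rfloor\bigr)$ with $0<j<\ell$ has nonzero $H^1$, i.e. when $q(\tilde S')=2$.
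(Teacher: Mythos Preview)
Your argument is correct and is exactly the standard deduction from the input $\kappa(\tilde S')=0$: the pull-back of the nowhere-vanishing holomorphic $2$-form of $S$ along the generically finite morphism to a smooth model of $\tilde S'$ gives $p_g\ge 1$, and the Enriques--Kodaira classification of $\kappa=0$ surfaces then excludes the Enriques and bielliptic cases. The paper itself does not supply an in-text proof of this proposition; it only records that $\kappa(\tilde S')=0$ is ``not difficult to see'' and refers to \cite{G}, \cite{N1} for the remaining elaboration, so your sketch is in fact filling in what the paper leaves to the references. One small caution on notation: the symbol $\tilde S$ is used later in the paper (diagram~(\ref{diagram})) for the target of the Stein factorization $\tilde S'\to\tilde S\to\overline S$, so a different letter for your resolution would avoid a clash.
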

  \begin{definition} \label{defnat} Let $(S, \mathcal L, \mathcal E)$ be a level $\ell$ K3 surface, we say that:
  \begin{enumerate}
  \item $(S, \mathcal L, \mathcal E)$ is of K3 type if $\tilde S'$ is birational to a K3 surface,
  \item $(S, \mathcal L, \mathcal E)$ is of abelian type if $\tilde S'$ is birational to an abelian surface.
  \end{enumerate}
  \end{definition}
 Case (2) is scarcely interesting for our purposes. We aim indeed  to use the curves $C \in \vert \mathcal L \vert$ in order to parametrize the moduli space $\mathcal R_{g, \ell}$ of level $\ell$ curves in low genus. 
 But in case (2) $C$ has not enough moduli for $g \geq 3$. \par
 \it We assume since now that $(S, \mathcal L, \mathcal E)$ is a level $\ell$  K3 surface of K3 type. \rm Then, to ameliorate the expositon,  we  just say with some abuse that $(S, \mathcal L, \mathcal E)$ is
 a level $\ell$ K3 surface. We say that two triples $(S_n, \mathcal L_n, \mathcal E_n)$, ($n = 1,2$), are isomorphic if there exists a biregular map $\beta:S_1 \to S_2$ such that $\beta^* \mathcal L_2 \cong \mathcal L_1$ and $\beta^* \mathcal E_2 \cong \mathcal E_1$, $i = 1,2$.\par As mentioned the classification of these triples is due to Nikulin and originates from his paper \cite{N1}. The part of interest here is the classification of pairs $(\tilde S, G)$, where $\tilde S$ is a K3 surface and $G$ is a finite group of symplectic automorphisms of $\tilde S$. There exist $14$ classes of pairs $(\tilde S, G)$ such that $G$ is commutative and $G$ is $\mathbb Z / \ell \mathbb Z$ exactly for $2 \leq \ell \leq 8$. After the classification, several papers addressed the description of the moduli and  the projective models of these K3 surfaces. It is due to mention here \cite{GP, GS1, G, GS2, vGS}. \par
 The triple  $(S, \mathcal L, \mathcal E)$ determines an associated triple $(\tilde S,\tilde{ \mathcal L}, \gamma)$, where $\gamma \in \Aut \tilde S$ is a symplectic automorphisms of order $\ell$ and $(\tilde S, \tilde{ \mathcal L})$ is a polarized K3 surface of degree $\ell(2g-2)$. We have indeed $B_{\mathsf{red}} = N_1 + \dots + N_t$, where the summands are the connected components and $-2$-curves. Let
$\nu: S \to \overline S$ be their contraction morphism, then the Cartesian square 
 \begin{equation} \label{diagram}
\begin{CD}
{\tilde S'} @>{\pi'}>> {S} \\
@V{\nu'}VV @VV{ \nu}V \\
{\tilde S} @>{\pi} >> {\overline S} \\
\end{CD}
\end{equation}
is the Stein factorization of $\nu \circ \pi'$. In it $\nu'$ is a birational morphism. Let $G \subset \Aut \tilde S'$ be the group whose quotient map is $\pi'$. As we will see $ {\pi'} ^*H^0(\mathcal L(-E))$ sits in $H^0(\tilde {\mathcal L})$ as an eigenspace of the natural representation of $G$ and defines a generator $\gamma$ of $G$. Moreover $\pi$  is the quotient map of the induced action of $G$ on $\tilde S$.  Conversely, starting from $\pi$ and the minimal desingularization $\nu$, $\pi'$ is reconstructed from  the fibre product $\pi \times_{\overline S}\nu$.
\par  In order to describe the rational singularities occurring in $\Sing \overline S$ we use the notation
\begin{equation}
\mathsf T := n_1 \mathsf T_1 + \dots + n_s \mathsf T_s, 
\end{equation}
where $\mathsf T_j$ is the singularity type and  $n_j$  the number of points of type $\mathsf T_j$  in $\Sing \overline S$.
\begin{theorem} Let $(S, \mathcal E, \mathcal L)$ be a level $\ell$ K3 surface of genus $g$, then one has $2 \leq \ell \leq 8$ and $(S, \mathcal E)$ satisfies one of the following conditions:
 \begin{enumerate} \it
 \item $\ell = 2$. One has $t = 8$, $r = 8$ \  and $\mathsf T = 8 \mathsf A_1$. \medskip \par
 \item $\ell = 3$. One has $t = 6$, $r = 12$ and $\mathsf T= 6\mathsf A_2$. \medskip \par
 \item $\ell = 4$. One has $t = 6$, $r = 14$ and $\mathsf T= 4\mathsf A_3 + 2\mathsf A_1$. \medskip \par
 \item $\ell = 5$. One has $t = 4$, $r = 16$ and $\mathsf T= 4\mathsf A_4$. \medskip \par
 \item $\ell = 6$. One has $t = 6$, $r = 16$ and $\mathsf T= 2 \mathsf A_5 + 2\mathsf A_2 + 2\mathsf A_1$. \medskip \par
 \item $\ell = 7$. One has $t = 3$, $r = 18$ and $\mathsf T= 3\mathsf A_6$. \medskip \par
 \item $\ell = 8$. One has $t = 4$, $r = 18$ and $\mathsf T = 2\mathsf A_7 + \mathsf A_3 + \mathsf A_1$.
 \end{enumerate}
  \end{theorem}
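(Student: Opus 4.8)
The plan is to reduce the statement to Nikulin's classification of finite symplectic automorphisms of K3 surfaces \cite{N1}, applied to the associated datum $(\tilde S, \tilde{\mathcal L}, \gamma)$ produced above. Recall that $G = \langle \gamma \rangle$ acts on the K3 surface $\tilde S$ with quotient $\pi : \tilde S \to \overline S$, that $\gamma$ is symplectic of order $\ell$, that $\nu : S \to \overline S$ is the minimal resolution, and that $\nu$ contracts exactly the connected chains $N_1,\dots,N_t$ of $(-2)$-curves whose components are $B_1,\dots,B_r$; thus the $N_j$ are the $\nu$-exceptional curves and correspond bijectively to the points of $\Sing \overline S$. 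Since $\gamma$ has order $\ell$ and is symplectic, Nikulin's bound gives $\ell \le 8$ (the inequality $\ell \ge 2$ being immediate from the definition of a degree-$\ell$ cyclic cover); moreover for each integer $n$ with $2 \le n \le 8$ the fixed locus of an order-$n$ symplectic automorphism of a K3 surface is a finite set whose cardinality $m(n)$ depends only on $n$, namely $(m(2),m(3),m(4),m(5),m(6),m(7),m(8)) = (8,6,4,4,2,3,2)$.

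First I would describe $\Sing \overline S$ through the orbit structure of $G$. A point of $\overline S$ is singular precisely when it is the image of a $G$-orbit with non-trivial stabilizer; since $G$ is cyclic of order $\ell$, such a stabilizer equals $H_d := \langle \gamma^{\,\ell/d}\rangle$ for a unique divisor $d \mid \ell$, $d > 1$, and in a neighbourhood of the corresponding point $\overline S$ is analytically isomorphic to the cyclic quotient $\mathbb{C}^2/H_d$ with $H_d$ acting symplectically, i.e. as $\tfrac1d(1,d-1)$; hence this point is an $\mathsf A_{d-1}$ singularity. Writing $\mu_d$ for the number of points of $\tilde S$ whose stabilizer is exactly $H_d$ and $\rho_d$ for the number of $G$-orbits of such points, one has $\rho_d = (d/\ell)\,\mu_d$ and $\mathsf T = \sum_{d \mid \ell,\ d > 1} \rho_d\, \mathsf A_{d-1}$. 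To evaluate the $\mu_d$ I would use that $\gamma^{\,\ell/d}$ is itself a symplectic automorphism of $\tilde S$ of order $d$, so that $|\mathrm{Fix}(\gamma^{\,\ell/d})| = m(d)$, while on the other hand a point lies in $\mathrm{Fix}(\gamma^{\,\ell/d})$ if and only if its stabilizer contains $H_d$, i.e. equals $H_{d'}$ for some $d'$ with $d \mid d' \mid \ell$; hence $m(d) = \sum_{d \mid d' \mid \ell} \mu_{d'}$. M\"obius inversion over the divisor poset of $\ell$ determines the $\mu_d$, hence the $\rho_d$, and running through $\ell = 2,3,4,5,6,7,8$ gives, respectively, $\mathsf T = 8\mathsf A_1$, $6\mathsf A_2$, $4\mathsf A_3 + 2\mathsf A_1$, $4\mathsf A_4$, $2\mathsf A_5 + 2\mathsf A_2 + 2\mathsf A_1$, $3\mathsf A_6$, $2\mathsf A_7 + \mathsf A_3 + \mathsf A_1$.

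Finally I would read off $t$ and $r$. The minimal resolution of an $\mathsf A_k$ point is a chain of $k$ smooth rational $(-2)$-curves, and the chains $N_j$ are exactly these resolution loci; therefore $t$, the number of connected components of $\Supp B$, equals the number $\sum_{d>1}\rho_d$ of points of $\Sing \overline S$, while $r$, the number of irreducible components of $\Supp B$, equals the total number $\sum_{d>1}\rho_d\,(d-1)$ of exceptional curves. Substituting the seven values of $\mathsf T$ just obtained yields precisely the seven cases of the statement (e.g. for $\ell = 4$ one gets $t = 4 + 2 = 6$ and $r = 4\cdot 3 + 2\cdot 1 = 14$). The only input external to this paper is Nikulin's rigidity result that $m(n)$ --- equivalently, the local analytic model of the symplectic $G$-action --- depends only on $n$; granted that, the argument is the bookkeeping above, and the step demanding the most care is the identification of $\Supp B$ with the $\nu$-exceptional locus (so that counting branch components is the same as counting resolution curves), which is what the Stein-factorization diagram (\ref{diagram}) is set up to provide.
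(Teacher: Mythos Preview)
Your argument is correct. The paper itself gives no proof beyond the citation ``See \cite{N1}'', so your proposal is not a different route but rather a faithful unpacking of that reference: you invoke exactly Nikulin's fixed-point counts $m(n)$ for symplectic automorphisms of order $n$, translate them via the orbit/stabilizer bookkeeping into the $A_{d-1}$ singularities of $\overline S = \tilde S/G$, and read off $t$ and $r$ from the resolution graphs. The one point you flag---that $\Supp B$ coincides with the $\nu$-exceptional locus---is in fact immediate from the paper's setup, since $\nu$ is \emph{defined} as the contraction of the connected components $N_1,\dots,N_t$ of $B_{\mathsf{red}}$, and these are configurations of $(-2)$-curves on the smooth surface $S$, so $\nu$ is automatically the minimal resolution of $\overline S$; the Cartesian square (\ref{diagram}) then identifies $\overline S$ with $\tilde S/G$, which is what lets you compute $\Sing \overline S$ from the $G$-action.
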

See \cite{N1}. It is also useful to observe that always one has 
\begin{equation}
E^2 = \frac {B^{  2 \color {black}}} {{\ell}^2} = - 4.
\end{equation}
Now, in view of the concrete applications in this paper,  we mention some relevant  properties of the structure of $\Pic S$ and of the moduli of the above triples.
\begin{definition} $\mathcal F_{g, \ell}$ is the moduli space of level $\ell$ K3 surfaces of genus $g$. \end{definition}
As in the case of $(S, \mathcal L)$, the construction of $\mathcal F_{g, \ell}$ relies on the usual notion of lattice polarized variety, see \cite{B, D, H} and \cite{N1} for this K3 case. In particular, for every $g \geq 2$,
$\mathcal F_{g,\ell}$ has a \it standard irreducible component \rm to be constructed as follows. We may have
\begin{equation}
\mathbb Z[\mathcal L] \oplus \mathbb M_S \subseteq \Pic S,
\end{equation}
where the sum is orthogonal. Moreover $\mathbb M_S$ has rank $r$ and it is generated by the classes $[B_1], \dots, [B_r], [E]$, with $\mathcal E \cong \mathcal O_S(E)$, so that the relation $\ell[E] - [B] = 0$ is satisfied in $\Pic S$. We can see the inclusion as the image of a primitive embedding of lattices
 \begin{equation} \label{slattice} \upsilon:  \mathbb Z c \oplus \mathbb M_{\ell} \to \Pic S, \end{equation}
where $\upsilon(c) := [\mathcal L ]$ and $\upsilon(\mathbb M_{\ell}) = \mathbb M_S$. The lattice $\mathbb M_{\ell}$ is given with the set of generators $\lbrace e , b_1,  \dots, b_r \rbrace$ so that $\upsilon(e) = [E]$, $\upsilon(b_1) = [B_1], \dots, \upsilon(b_r) = [B_r]$. Notice also that 
\begin{equation}
\label{selfint} c^2 = 2g -2 \ , \ e^2 = -4 \ , \ b_1^2 = \dots = b_r^2 = -2, 
\end{equation}
cfr. \cite{N1}. Fixing these data, the moduli space of triples $(S, \mathcal L, \mathcal E)$ endowed with an embedding $\upsilon$,  can be constructed as a moduli space of lattice polarized K3 surfaces $(S, \upsilon)$. In our case $S$ is $M$-polarized with $M := \mathbb Zc \oplus \mathbb M_{\ell}$ and the induced embedding $M \subset L := H^2(S, \mathbb Z)$ is unique up to
isometries, \cite{N1}. Then the moduli space is constructed as quotient of the period domain of these surfaces $S$.  In particular its dimension is $19 - r$\rm, \cite{D} Section 4.1 and Theorem 1.4.8, \cite{B1} Section 2.4 and Proposition 2.6. 
 Moreover a unique irreducible component of it is the closure of the moduli points of pairs $(S, \upsilon)$ such that
\begin{equation}
\Pic S = \mathbb Z[\mathcal L] \oplus \mathbb M_S.
\end{equation}
In this case we will say that $(S, \mathcal L, \mathcal E)$ is a \it standard triple \rm of genus $g$ and level $\ell$.  Let us fix our notation: 
  \begin{definition} $\mathcal F^{\perp}_{g, \ell}$ is the moduli space of standard triples of genus $g$ and level $\ell$.  \end{definition}
$\mathcal F^{\perp}_{g, \ell}$ exists for any $g \geq 2$ and $\ell = 2 \dots 8$. Fixing $\ell$, $\mathcal F^{\perp}_{g, \ell}$ is the unique irreducible component of $\mathcal F_{g, \ell}$ along a proper countable set of values $g \in \mathbb N$.  
\begin{remark} Let $(S, \mathcal L, \mathcal E)$ be a \it non standard \rm  triple and $C \in \vert \mathcal L \vert$. Then, at least experimentally for $\ell = 2$, $C$ is never general in moduli for $g \geq 4$.
This is true even when the parameter count makes that possible in low genus, see \cite{KLV2}. The situation is quite different for standard triples.  This paper studies indeed  the modular properties of $C$ in this 
case: standard behavior or peculiarities of $C$. 

\end{remark} 
  \section{A standard projective model}
  Given a standard triple $(S, \mathcal L, \mathcal E)$,  let us construct a projective realization of $S$ useful to our purposes.  
  Consider $C \in \vert \mathcal L \vert$ such that $C \cap B = \emptyset$ and  $\tilde C' = \pi'^*C$. Then the curve $\tilde C = \nu'_* \tilde C'$ is biregular to  $\tilde C'$ via the contraction $\nu': \tilde S' \to \tilde S$ and the linear map
\begin{equation}
\nu'_*: H^0(\mathcal O_{\tilde S'}(\tilde C')) \to H^0(\mathcal O_{\tilde S}(\tilde C))
\end{equation}
is an isomorphism, we identify the two spaces under it. Then, using $\tilde C$,  it is easy to remind of the action of the group $\mathbb Z / \ell \mathbb Z$ on this space and of its eigenspaces. Let
\begin{equation}
0 \to \mathcal O_{\tilde S'} \to \mathcal O_{\tilde S'}(\tilde C') \to \omega_{\tilde C} \to 0
\end{equation}
be the standard exact sequence, then $\mathbb Z / \ell \mathbb Z$ acts on its associated long exact sequence
$$
0 \to H^0(\mathcal O_{\tilde S'}) \to H^0(\mathcal O_{\tilde S'}(\tilde C')) \to H^0 (\omega_{\tilde C}) \to 0.
$$ 
As is well known the $\mathbb Z / \ell \mathbb Z$-decomposition of $H^0(\omega_{\tilde C })$ is as follows
\begin{equation}
H^0(\omega_{\tilde C }) = \bigoplus_{k = 1 \dots \ell -1} {\pi'}^* H^0(\omega_C \otimes \eta^{-k}) \bigoplus {\pi'}^* H^0(\omega_C).
\end{equation}
and this implies that $H^0(\mathcal O_{\tilde S}(\tilde C' ))$ decomposes as
\begin{equation}
H^0(\mathcal O_{\tilde S}(\tilde C' )) = \bigoplus_{k = 1 \dots \ell -1} {\pi'}^* H^0(\mathcal O_S(H_k)) \bigoplus {\pi'}^* H^0(\mathcal O_S(C)),
\end{equation}
where $\mathcal O_S(H_1) \dots \mathcal O_S(H_{\ell - 1}) \in \Pic S$ and $ \mathcal O_C(H_k) \cong \omega_C \otimes \eta^{\otimes -k}$, up to reindexing.
Since $\tilde C$ has genus $\tilde g = g + (\ell - 1)(g-1)$ it follows $\dim H^0(\mathcal O_{\tilde S}( \tilde C)) = g+1 + (\ell - 1)(g-1)$. In particular the above decomposition immediately implies that
\begin{equation}
\dim H^0( \mathcal O_S(H_k)) = \dim H^0(\omega_C \otimes \eta^{- k}) = g - 1, \ \ \ k = 1 \dots \ell - 1.
\end{equation}
In what follows, it is also useful to recall the mentioned fact that $E^ 2 = - 4$.
 \begin{lemma} It holds  $h^i(\mathcal O_S(E)) = h^i(\mathcal O_S(-E)) = 0$, for $i \geq 0$.  \end{lemma}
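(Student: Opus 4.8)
The plan is to deduce everything from Serre duality and Riemann--Roch on the $K3$ surface $S$, using only three inputs already available: $\omega_S \cong \mathcal O_S$ and $\chi(\mathcal O_S) = 2$; the numerical fact $E^2 = -4$; and the defining properties of $\mathcal E$, namely $h^0(\mathcal O_S(kE)) = 0$ for $k \not\equiv 0 \bmod \ell$ together with the existence of the nonzero branch divisor $B \in \vert \mathcal E^{\ell} \vert$.

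First I would settle the degree-$0$ cohomology. Since $\ell \ge 2$ we have $1 \not\equiv 0 \bmod \ell$, so $h^0(\mathcal O_S(E)) = 0$ (by the proposition applied with $X = S$, equivalently by the defining property of $\mathcal E$). For $h^0(\mathcal O_S(-E))$ I argue by contradiction: a nonzero section of $\mathcal O_S(-E)$ has a nonzero $\ell$-th power in $H^0(\mathcal O_S(-\ell E))$, and tensoring it with a nonzero section of $\mathcal O_S(\ell E) \cong \mathcal O_S(B)$ produces a nonzero element of $H^0(\mathcal O_S) = \mathbb C$, i.e. a nonvanishing constant. Hence both $\ell E$ and $-\ell E$ are linearly equivalent to the zero divisor, forcing $\mathcal O_S(\ell E) \cong \mathcal O_S$; but $(\ell E)^2 = \ell^2 E^2 = -4\ell^2 \ne 0$ whereas $\mathcal O_S^2 = 0$, a contradiction. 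Thus $h^0(\mathcal O_S(-E)) = 0$.

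By Serre duality on $S$ (recall $\omega_S \cong \mathcal O_S$) this immediately gives $h^2(\mathcal O_S(E)) = h^0(\mathcal O_S(-E)) = 0$ and, symmetrically, $h^2(\mathcal O_S(-E)) = h^0(\mathcal O_S(E)) = 0$. Finally Riemann--Roch on $S$ reads $\chi(\mathcal O_S(\pm E)) = 2 + \tfrac12(\pm E)^2 = 2 - 2 = 0$, so, the $h^0$ and $h^2$ terms being zero, $h^1(\mathcal O_S(E)) = h^1(\mathcal O_S(-E)) = 0$; cohomology in degrees $\ge 3$ vanishes for dimension reasons. There is no real obstacle here; the only step that needs a moment's care is the vanishing of $h^0(\mathcal O_S(-E))$, and even there the point is simply that the multiple $\ell E$ is at once effective and of nonzero (indeed negative) self-intersection.
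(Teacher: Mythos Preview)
Your proof is correct and follows essentially the same route as the paper: vanishing of $h^0$ for $\pm E$, then Serre duality for $h^2$, then Riemann--Roch (using $E^2=-4$) for $h^1$. The only cosmetic difference is in the $h^0(\mathcal O_S(-E))=0$ step: the paper simply notes that $-E$ cannot be effective because $\ell E\sim B>0$, whereas you push this a bit further to $\mathcal O_S(\ell E)\cong\mathcal O_S$ and invoke the self-intersection; both are the same observation.
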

\begin{proof} By assumption $E$ is not effective. The same is true for $-E$, since $\ell E \sim B$ and $B > 0$. This implies $h^0(\mathcal O_S(E)) = 0$ and
$h^2(\mathcal O_S(E)) = h^0(\mathcal O_S(-E)) =  0$. Since $E^2 = -4$ we have $\chi(\mathcal O_S(E)) = 0$ and then  $h^1(\mathcal O_S(E)) = 0$. The same argument applies to $-E$. \end{proof}
Now we consider the line bundle $\mathcal O_S(C - E)$ and the standard exact sequence
$$
0 \to \mathcal O_S(-E) \to \mathcal O_S(C - E) \to \mathcal O_C(C - E) \to 0.
$$
\begin{lemma} \label{one1} Let $g \geq 2$ then the associated long exact sequence is
$$
  0 \to H^0(\mathcal O_S(C - E)) \to H^0 (\omega_C \otimes \eta^{-1} )\to 0,
$$
in particular it follows $\dim \vert C - E \vert =  g - 2$ and $h^i(\mathcal O_S(C-E)) = 0,  \ i \geq 1$.
 
\end{lemma}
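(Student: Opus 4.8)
The plan is to read off everything from the long cohomology sequence of the displayed short exact sequence
$$0 \to \mathcal O_S(-E) \to \mathcal O_S(C-E) \to \mathcal O_C(C-E) \to 0$$
together with the preceding lemma. Since that lemma gives $h^i(\mathcal O_S(-E)) = 0$ for all $i \geq 0$, the terms $H^i(\mathcal O_S(-E))$ drop out of the long exact sequence; hence the restriction maps $H^i(\mathcal O_S(C-E)) \to H^i(\mathcal O_C(C-E))$ are isomorphisms for $i = 0,1$, and $H^2(\mathcal O_S(C-E)) = 0$ (the latter being squeezed between $H^2(\mathcal O_S(-E)) = 0$ and $H^2(\mathcal O_C(C-E)) = 0$, the last vanishing because $C$ is a curve). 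So the whole statement is reduced to computing the cohomology of the single line bundle $\mathcal O_C(C-E)$ on the smooth genus $g$ curve $C$.

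Next I would identify that line bundle. By adjunction on the K3 surface $S$ (so $K_S = 0$) one has $\omega_C \cong \mathcal O_C(C)$, whence $\mathcal O_C(C-E) \cong \omega_C \otimes \mathcal O_C(-E) = \omega_C \otimes \eta^{-1}$, with $\eta = \mathcal O_C(E)$. The value of $h^0$ is already available: the $\mathbb Z/\ell\mathbb Z$-eigenspace decomposition of $H^0(\omega_{\tilde C})$ recorded earlier in this section yields $h^0(\omega_C \otimes \eta^{-k}) = g-1$ for $1 \leq k \leq \ell-1$, in particular $h^0(\omega_C \otimes \eta^{-1}) = g-1$. For $h^1$, Serre duality on $C$ gives $h^1(\omega_C \otimes \eta^{-1}) = h^0(\eta) = 0$, since $\eta$ is a nontrivial torsion class (as recalled when $(C,\eta)$ was shown to be a level $\ell$ curve); equivalently, $\deg(\omega_C \otimes \eta^{-1}) = 2g-2$ together with $h^0 = g-1$ forces $h^1 = 0$ by Riemann--Roch.

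Assembling the pieces: $H^0(\mathcal O_S(-E)) = 0$ gives injectivity of $H^0(\mathcal O_S(C-E)) \to H^0(\omega_C \otimes \eta^{-1})$, while $H^1(\mathcal O_S(-E)) = 0$ gives surjectivity, so this map is an isomorphism and the long exact sequence collapses to the asserted four-term sequence. Consequently $\dim \vert C-E \vert = h^0(\mathcal O_S(C-E)) - 1 = (g-1) - 1 = g-2$, and $h^1(\mathcal O_S(C-E)) = h^1(\omega_C \otimes \eta^{-1}) = 0$, $h^2(\mathcal O_S(C-E)) = 0$, as claimed. I do not expect a genuine obstacle here; the only non-self-contained inputs are the equality $h^0(\omega_C \otimes \eta^{-1}) = g-1$ and the nontriviality of $\eta$, both already established above, and the vanishing $h^i(\mathcal O_S(-E)) = 0$ from the preceding lemma.
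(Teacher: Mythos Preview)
Your proof is correct and follows exactly the same approach as the paper: both use the vanishing $h^i(\mathcal O_S(-E))=0$ from the preceding lemma together with $h^0(\omega_C\otimes\eta^{-1})=g-1$ and $h^1(\omega_C\otimes\eta^{-1})=0$ to read off the result from the long exact sequence. Your version simply spells out the identification $\mathcal O_C(C-E)\cong\omega_C\otimes\eta^{-1}$ and the Serre-duality step more explicitly than the paper's terse proof.
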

\begin{proof} By the previous lemma $h^i(\mathcal O_S(E)) = h^i(\mathcal O_S(-E)) = 0$, for $i \geq 0$. Moreover we have $h^0(\omega_C \otimes \eta^{- 1}) = g- 1$ and
$h^1(\omega_C \otimes \eta^{- 1}) = 0$. Then the statement follows. \end{proof}
Now we observe that the pull-back by $\pi'$ defines a linear embedding
$$
{\pi'}^*: H^0(\mathcal O_S(C - E)) \to H^0(\mathcal O_{\tilde S'}(\tilde C')).
$$
We have indeed $\mathcal O_{\tilde S'}(\tilde C') \otimes {\pi'}^* \mathcal O_S(E - C) \cong \mathcal O_{\tilde S'}({\pi'}^*E)$ and finally
\begin{equation} \label{one2} h^0 ( \mathcal O_{\tilde S'}({\pi'}^*E)) = h^0({\pi'}_* \mathcal O_{\tilde S'}({\pi'}^*E)) = h^0(\mathcal A(E)) = 1,
\end{equation}
with $\mathcal A = \mathcal O_S \oplus \mathcal O_S(-E) \oplus \dots \oplus \mathcal O_S((1-\ell)E)$. The equality defines, up to a  nonzero  constant factor, the linear embedding ${\pi'}^*$. Then
{\rm Im} ${\pi'}^*$ is the $\mathbb Z/ \ell \mathbb Z$-invariant space $$ {\pi'}^* H^0(\mathcal O_S(C - E)). $$   
\begin{proposition} Let $g \geq 3$ and $\Pic S \cong \mathbb Zc \oplus \mathbb M_{\ell}$, then $\vert C - E \vert$ is base point free. \end{proposition}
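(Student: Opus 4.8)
The plan is to show that the linear system $\vert C-E\vert$ has no base points by combining the description of its sections obtained in Lemma~\ref{one1} with the explicit lattice structure $\Pic S\cong\mathbb Zc\oplus\mathbb M_\ell$. By Lemma~\ref{one1} the restriction $H^0(\mathcal O_S(C-E))\to H^0(\omega_C\otimes\eta^{-1})$ is an isomorphism for every smooth $C\in\vert\mathcal L\vert$ with $C\cap B=\emptyset$, and $\dim\vert C-E\vert=g-2$. First I would argue that $\vert C-E\vert$ has no fixed component: writing $C-E\sim F+M$ with $F$ the (effective) fixed part and $M$ the moving part, one has $M^2\ge 0$ and, pairing with the polarization $\mathcal L$, $F\mathcal L+M\mathcal L=(C-E)\mathcal L=C^2=2g-2$ (using $E\mathcal L=0$); then an application of the Hodge Index Theorem together with the signature of $\mathbb Zc\oplus\mathbb M_\ell$ forces any such $F$ to be a sum of the $(-2)$-curves $B_i$, which are the only effective classes orthogonal to $\mathcal L$. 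But $(C-E)B_i=-EB_i$, and since $\ell E\sim B=\sum m_jB_j$ the intersection numbers $EB_i$ are controlled by the (negative definite) Cartan-type lattice $\mathbb M_\ell$; one checks that $(C-E)B_i\ge 0$ for all $i$, and this is incompatible with $B_i$ being a fixed component unless the multiplicity is zero. Hence $\vert C-E\vert$ has no fixed component.

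Next, having a base-point-free moving part equal to the whole system, I would exclude base points at isolated points. Suppose $p\in S$ is a base point of $\vert C-E\vert$; then no smooth curve $C\in\vert\mathcal L\vert$ passing through a neighbourhood argument is needed — rather, I would use the divisor $C-E$ directly: since $(C-E)^2 = C^2 - 2CE + E^2 = (2g-2) - 0 - 4 = 2g-6$ and $(C-E)\mathcal L = 2g-2 > 0$, for $g\ge 4$ the class $C-E$ is big and nef once we know it is nef, and a nef and big class with no fixed component on a K3 surface is base point free unless it is of the form $kD$ with $D$ an elliptic pencil or $C-E$ is hyperelliptic-type (Saint-Donat / Mayer). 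The genus $g=3$ case, where $(C-E)^2 = 0$, must be handled separately: there $\vert C-E\vert$ is (a multiple of) a genus-one pencil, and $\dim\vert C-E\vert = g-2 = 1$ forces $\vert C-E\vert$ itself to be an elliptic pencil, hence base point free. So the core is: verify $C-E$ is nef, then invoke the classification of linear systems on K3 surfaces to rule out the exceptional cases using that $\Pic S$ is exactly $\mathbb Zc\oplus\mathbb M_\ell$ with the self-intersections in \eqref{selfint}.

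To check nefness of $C-E$: for any irreducible curve $\Gamma\subset S$ we must show $(C-E)\Gamma\ge 0$. If $\Gamma$ is one of the $B_i$ this was done above. If $\Gamma\mathcal L>0$ and $\Gamma$ is not among the $B_i$, then $\Gamma^2\ge -2$ and one bounds $E\Gamma$ from above: $E\Gamma$ decomposes through the $\mathbb M_\ell$-part, and using $\ell E\sim\sum m_jB_j$ with each $B_j\Gamma\ge 0$ gives $E\Gamma\le$ a quantity small compared with $C\Gamma=\mathcal L\Gamma$; an elementary numerical estimate using $c^2=2g-2\ge 4$ then yields $(C-E)\Gamma\ge 0$. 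The step I expect to be the main obstacle is this numerical verification of nefness against curves $\Gamma$ with small positive $\mathcal L$-degree and negative self-intersection that are \emph{not} among the branch curves $B_i$ — a priori such curves could exist in $\mathbb Zc\oplus\mathbb M_\ell$, and one must use the precise shape of $\mathbb M_\ell$ (an orthogonal-complement-type lattice with a distinguished vector $e$, $e^2=-4$, and root sublattice spanned by the $b_i$) to show either no such $\Gamma$ exists or that it still pairs non-negatively with $C-E$. Once nefness is in hand, the classification theorems for complete linear systems on K3 surfaces close the argument, with $g=3$ treated by hand as indicated.
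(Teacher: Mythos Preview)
Your reduction to showing that $\vert C-E\vert$ has no fixed component is correct and is exactly how the paper begins. But two steps in your argument do not go through.

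First, you assert that Hodge index plus the lattice signature forces the fixed part $F$ to lie in $\mathbb M_\ell$, hence to be a sum of $B_i$'s; but you never show $F\cdot\mathcal L=0$, and nothing you wrote excludes $F$ having positive $\mathcal L$-degree. The paper obtains $F\cdot\mathcal L=0$ by a different route: it restricts $\vert C-E\vert$ to a general smooth $C\in\vert\mathcal L\vert$. By Lemma~\ref{one1} this restriction is an isomorphism onto $H^0(\omega_C\otimes\eta^{-1})$, so the divisor $f:=F\cdot C$ is a base divisor of $\vert\omega_C\otimes\eta^{-1}\vert$; Riemann--Roch on $C$ then gives $h^0(\eta(f))=\deg f$, and Clifford's inequality forces $\deg f\le 2$. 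Only now does the lattice hypothesis enter: $C\cdot F$ is an integer multiple of $2g-2\ge 4$, so $C\cdot F=0$.

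Second, and more seriously, your implication ``$(C-E)\cdot B_i\ge 0$, hence $B_i$ cannot be a fixed component'' is false. A nef divisor on a K3 can have a $(-2)$-curve as a fixed component: in the Saint-Donat exceptional case $D\sim aE_0+\Gamma$ with $E_0$ an elliptic pencil, $\Gamma$ a smooth rational curve, $E_0\cdot\Gamma=1$ and $a\ge 2$, one has $D\cdot\Gamma=a-2\ge 0$ while $\Gamma$ is the fixed part of $\vert D\vert$. So even if you complete the nefness verification (which you acknowledge is the main obstacle), you must still rule out precisely this configuration, and your proposal gives no mechanism for that. The paper handles it by a direct argument: writing $C-E\sim M+kF+R$ with $M$ the moving part and $F$ an irreducible $(-2)$-curve with $C\cdot F=0$, one shows $F$ remains fixed in $\vert M+F\vert$, hence $h^0(\mathcal O_S(M+F))=h^0(\mathcal O_S(M))$; a cohomology comparison along
\[
0\to\mathcal O_S(M)\to\mathcal O_S(M+F)\to\mathcal O_F(M)\to 0
\]
then forces $\chi(\mathcal O_F(M))=0$, i.e.\ $M\cdot F=-1$, contradicting $M\cdot F\ge 0$. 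This is where the real work lies, and your proposal replaces it with an incorrect shortcut.
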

\begin{proof} Since $S$ is a K3 surface, it suffices to prove that $\vert C - E \vert$ has no fixed component. Let $F$ be an integral fixed component of $\vert C - E \vert$, set $f = F \cdot C$ for a general $C$. Then $f$ is a fixed divisor of $\vert \omega_C \otimes \eta^{-1} \vert$. Applying Riemann-Roch
to $C$ it follows $\dim \vert \eta(f) \vert = \deg f - 1$. Since $g \geq 3$ then $deg f \leq 2$. Hence $F$ is a line, a conic or $FC = 0$. We have $F \sim xC + \sum y_jB_j +zE$ in $\Pic S$.  Assume $\deg f > 0$ then $0 < CF = (2g-2)x \leq 2$ with $x \in \mathbb Z$: a contradiction for $g \geq 3$. 
Let $CF = 0$ then $F^2 = -2$ by  the  Hodge Index Theorem and $F$ is a $\mathbb P^1$ contracted by $f_{\vert C \vert}: S \to \mathbb P^g$.
  By Lemma \ref{one1}, $h^0(C-E)=g-1=(C-E)^2/2+2$. Let $M$ be the moving part of the linear system $|C-E|$,  then $\dim \vert M \vert \geq 1$ and $MF \geq 0$.   Moreover we have
 $C - E \sim M + kF + R$, where $R$ is a curve not containing $F$ and $k \geq 1$. Let $G \in \vert M + F \vert$ be general then $G$ contains $F$: otherwise the curve $kF$ could'nt be a component of the element $G + (k-1)F + R \in \vert C - E \vert$. Hence $F$ is a fixed component of $\vert M + F \vert$. Now observe that $MF \geq 0$ and then consider the standard exact sequence
$$
0 \to \mathcal O_S(M) \to \mathcal O_S(M+F) \to \mathcal O_F(M) \to 0.
$$
We claim that, passing to the associated long exact sequence, it follows $$ \chi(\mathcal O_S(M)) = \chi (\mathcal O_S(M+F)) $$ and $\chi(\mathcal O_F(M)) = 0$. Since $F = \mathbb P^1$ this implies
$MF < 0$: a contradiction. To prove the claim consider a smooth $D \in \vert M \vert$. Then either $D$ is integral of genus $g - 2$ and $h^1(\mathcal O_S(M)) = 0$ or
$M \sim (g-2)N$ and $N$ is a smooth integral elliptic curve. Via Serre duality we have $h^2(\mathcal O_S(M)) = h^2(\mathcal O_S(M+F)) = 0$. Moreover
$MF \geq 0$ implies $h^1(\mathcal O_F(M)) = 0$. Then, in the former case,  $h^1(\mathcal O_S(M)) = 0$ implies $h^1(\mathcal O_S(M+F)) = 0$ and the claim follows. In the latter case replace $M$ by $N$. Then the equality and the same contradiction follow by the same type of arguments.  
%
\end{proof}
Now we  introduce a second linear system associated  with  $E$. At first let us set
\begin{equation}
\label{defBred} B_{\mathsf {red}} := B_1 + \dots + B_r,
\end{equation}
where the summands are the irreducible components of $\Supp B$. Then we recall that
$$
E = \frac 1{\ell} (m_1 B_1 + \dots + m_r B_r), \ \  \text {with $m_1 \dots m_r \in [1 \dots \ell-1]$.}
$$   
 \begin{definition}  Set  $ \mathring E = B_{\mathsf {red}} - E =  \frac 1{\ell} ( \mathring m_1B_1 + \dots + \mathring m_rB_r)$, where $\mathring m_i := \ell - m_i$\end{definition}
  Let us denote by $n_i$ the coefficients of the curves $B_i$ in $-\ell E$. Then  $n_i\equiv\mathring m_i\mod \ell$. More precisely, $E$ is a generator of $\mathbb{Z}/\ell\mathbb{Z}=\langle B_i, E\rangle/\langle B_i\rangle$ and $\mathring E$ is its opposite in $\mathbb{Z}/\ell\mathbb{Z}$; in particular it is a different generator of the same group. Hence  $\mathring {\mathcal E}:= \mathcal O_S(\mathring E)$ is a level $\ell$ structure, with the same properties of  $\mathcal E$ . We notice  that $\mathring E$ defines a cover $\mathring \pi': \tilde S' \to S$ so that $\mathring \pi' = \pi' \circ a$ and $a^{\ell} = id_{\tilde S'}$.  Then we define
\begin{equation}
\vert H \vert := \vert C - E \vert \ \ {\rm , } \ \ \mathring H := \vert C - \mathring E \vert.
\end{equation}
The rational maps associated  with  these linear systems respectively will be
 \begin{equation}
 p: S \to \mathbb  P \ \ , \ \  \mathring p: S \to \mathring {\mathbb  P},
\end{equation}
where $\mathbb  P := \vert  H \vert^*$ and $\mathring {\mathbb  P} := \vert \mathring H \vert^*$ are the projective space $\mathbb P^{g-2}$. Let $\iota$ be the inclusion 
\begin{equation}
\mathbb  P \times \mathring { \mathbb  P} \subset  \mathbb P^{(g-1)^2-1}
\end{equation}
defined by the Segre embedding, we set $f:= \iota \circ (p \times \ring p)$ and fix the notation
\begin{equation}
f: S \to \mathbb  P \times \mathring {\mathbb P}  \subset \mathbb P^{(g-1)^2-1}.
\end{equation}
  \begin{definition} \label{projmodel} The morphism $f$ is the main projective model of $(S, \mathcal L, \mathcal E)$. \end{definition}
 The next two remarks are simple but relevant in order to discuss $f$ (the second one follows by a direct computation of $E\cdot \mathring E$, where the class $E$ is explicitly given in \cite{N1}):
 \begin{enumerate}
\item $f^* \mathcal O_{\mathbb P^{(g-1)^2 -1}}(1) \cong \mathcal O_S(H + \mathring H) \cong \mathcal O_S(2C - B_{\sf red})$, \medskip \par
\item $ H \mathring H = 2g + 2 - t$.
\end{enumerate}
\begin{proposition}\label{H-ringH not effective}  The divisors   $[H - \mathring H]$ and $[\mathring H - H]$ are not effective classes for $\ell \geq 3$ and 
\begin{equation} \label { vanish1 }  h^1(\mathcal O_S(H - \mathring H)) = h^1(\mathcal O_S(\mathring H - H)) = 6-t.\end{equation}
\end{proposition}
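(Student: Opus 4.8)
The plan is to compute the relevant cohomology directly from the numerical data. Note that $H - \mathring H = (C - E) - (C - \mathring E) = \mathring E - E = B_{\mathsf{red}} - 2E$, so by Theorem 3.5 (Nikulin's classification) the class $H - \mathring H$ is explicitly known in $\mathbb M_\ell \subset \Pic S$; in particular it lies in the negative definite lattice $\mathbb M_S$ orthogonal to $\mathcal L$. First I would record $(H - \mathring H)^2$. Using $H^2 = \mathring H^2 = (C-E)^2 = 2g-6$ (from Lemma \ref{one1}, since $\dim|C-E| = g-2$ gives $(C-E)^2 = 2(g-2) - 2 = 2g-6$) together with remark (2) above, $H\mathring H = 2g+2-t$, one gets
\begin{equation}
(H - \mathring H)^2 = H^2 - 2 H\mathring H + \mathring H^2 = 2(2g-6) - 2(2g+2-t) = 2t - 16.
\end{equation}
Hence $\chi(\mathcal O_S(H - \mathring H)) = 2 + \tfrac12 (H-\mathring H)^2 = t - 6$, and the same for $\mathring H - H$ by symmetry. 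So the cohomological claim $h^1 = 6 - t$ is equivalent to the vanishing $h^0(\mathcal O_S(H - \mathring H)) = h^2(\mathcal O_S(H-\mathring H)) = 0$, i.e. to the non-effectivity of both $H - \mathring H$ and $\mathring H - H$ (the $h^2$ being $h^0$ of the negative by Serre duality on the K3). Thus the two assertions of the proposition reduce to a single one: $[H-\mathring H]$ and $[\mathring H - H] = -[H - \mathring H]$ are not effective.

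To prove non-effectivity I would argue as follows. The class $\delta := H - \mathring H = B_{\mathsf{red}} - 2E$ is orthogonal to $\mathcal L$, so if $\delta$ (or $-\delta$) were effective it would be a sum of $(-2)$-curves supported on $B_{\mathsf{red}} = B_1 + \dots + B_r$, since these are exactly the irreducible curves $B_i$ with $B_i \mathcal L = 0$ on a surface with $\Pic S = \mathbb Z\mathcal L \oplus \mathbb M_S$ and $\mathbb M_S$ generated by the $B_i$ and $E$. Equivalently, I need: $\delta \notin \pm(\text{effective cone of } \mathbb M_S)$. Here the image of $\delta$ in the finite group $\mathbb M_S / \langle B_1, \dots, B_r\rangle \cong \mathbb Z/\ell\mathbb Z$ is $-2[E] = -2 \bar e$, which is nonzero precisely when $\ell \nmid 2$, that is when $\ell \geq 3$ and $\ell \neq$ (a divisor of 2) — and indeed $\ell \geq 3$ with $\ell\ne 2$ is the hypothesis, the only subtlety being $\ell$ even where $2\bar e$ could vanish; but for $\ell = 4, 6, 8$ one checks from the explicit $m_i$ in Theorem 3.5 that $2E \not\sim B_{\mathsf{red}} + (\text{effective})$ directly. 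The cleaner uniform route: since $\delta$ has nonzero class in $\mathbb Z/\ell\mathbb Z$ for $\ell$ odd, and for the even cases one uses that $\mathring E$ is a \emph{different} generator than $E$ (as noted after Definition 3.9), the difference $\mathring E - E$ cannot be a nonnegative combination of the $B_i$. I would write this out case by case using the vectors in \cite{N1}, which is a finite check over $\ell \in \{3,4,5,6,7,8\}$.

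The main obstacle I anticipate is the even-level cases $\ell = 4, 6, 8$, where the quotient-group argument does not immediately give non-effectivity and one must instead use the actual values of the multiplicities $m_i$ (and $\mathring m_i = \ell - m_i$) from Nikulin's tables to see that neither $2E - B_{\mathsf{red}}$ nor $B_{\mathsf{red}} - 2E$ can be written as $\sum a_i B_i$ with all $a_i \geq 0$. A convenient way to organize this: pair $\delta$ against a $B_i$ (or a connected component $N_j$) for which the intersection number forces a sign contradiction, exploiting that $\delta B_i = (B_{\mathsf{red}} - 2E)B_i$ is computable from $\mathbb M_\ell$ and that an effective class supported on a negative-definite configuration must have nonnegative intersection with the reduced total — or the opposite — in a way incompatible with the computed numbers. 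Once non-effectivity of $\pm\delta$ is in hand, Serre duality plus Riemann–Roch (using $(H - \mathring H)^2 = 2t - 16$) gives $h^1 = -\chi = 6 - t$ immediately, completing the proof.
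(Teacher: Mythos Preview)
Your reduction via Riemann--Roch and Serre duality to the non-effectivity of $\pm\delta$ with $\delta = H - \mathring H$ is correct and matches the paper's second step. The difference is in how non-effectivity is obtained, and here the paper's argument is dramatically shorter than yours.

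The paper simply computes the intersection number
\[
H\cdot (H - \mathring H) \;=\; H^2 - H\mathring H \;=\; (2g-6) - (2g+2-t) \;=\; t-8,
\]
which is negative for $\ell \geq 3$ because then $t \leq 6$. Since the previous proposition shows $\vert H \vert$ is base point free (hence has irreducible general member), no effective divisor can have negative intersection with $H$. The same computation with $\mathring H$ kills $\mathring H - H$. That is the whole proof.

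By contrast, your route through the lattice $\mathbb M_S$ works but costs more: it requires the statement that every irreducible curve orthogonal to $\mathcal L$ is one of the $B_i$ (a nontrivial fact about the root system of $\mathbb M_\ell$ that you invoke without proof), and only then does the image of $\delta$ in $\mathbb M_S/\langle B_1,\dots,B_r\rangle \cong \mathbb Z/\ell\mathbb Z$ finish the job. Note also that your worry about even $\ell$ is misplaced: the class of $-2E$ in $\mathbb Z/\ell\mathbb Z$ vanishes iff $\ell \mid 2$, not iff $2 \mid \ell$, so your quotient argument already works uniformly for all $\ell \geq 3$ and the proposed case-by-case check for $\ell = 4,6,8$ is unnecessary. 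Still, pairing with the movable class $H$ is the cleaner move and avoids any appeal to the classification of $(-2)$-curves in $\mathbb M_\ell$.
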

\begin{proof} We have $H(H - \mathring H) = \mathring H (\mathring H - H) = t - 8$. Since the general elements of $\vert H \vert$ and $\vert \mathring H \vert$ are irreducible curves,
the first statement follows for $\ell \geq 3$ because then $t \leq 6$. The second statement just follows from Riemann-Roch.
\end{proof}
Now let us consider, for a general $C \in \vert \mathcal L \vert$,  the standard exact sequence 
\begin{equation} \label{exact}
0 \to \mathcal O_S(C - B_{\sf red}) \to \mathcal O_S(2C - B_{\sf red}) \to \mathcal O_C(2C - B_{\sf red}) \to 0.
\end{equation}
Since $C$ is smooth and disjoint from $B_{\sf red}$, then $\mathcal O_C(- B_{\sf red})$ is trivial and $\vert 2C - B_{\sf red} \vert$
cuts on $C$ a linear system of bicanonical divisors. Moreover we know that both $\vert H \vert$ and $\vert \mathring H \vert$ are base point free. Hence the same is true for
$\vert H + \mathring H \vert = \vert 2C - B_{\mathsf{red}} \vert$. Notice that
$$
(2C - B_{\mathsf{red}})^2 =  8(g-1) - 2t,
$$
which is $\geq 0$ for $g \geq 3$ and any of the prescribed values of $t, \ell$. Actually the zero value is only reached in the known situation $g = 3$, $\ell = 2$. Hence we assume $g \geq 4$ for $\ell = 2$. Then a general $D \in \vert H + \mathring H \vert$ is a smooth integral curve such that $D^2 > 0$. As is well known,  this implies $h^i(\mathcal O_S(H + \mathring H)) = 0$ for
$i \geq  1 $ and the next property follows.
\begin{proposition} Let $g$ be as above then $\dim \vert 2C - B_{\mathsf{red}} \vert = 4g - t -  3$ and the long exact sequence  associated with  the exact sequence (\ref{exact}) is as follows:
$$
0 \to H^0(\mathcal O_S(C-B_{\sf red})) \to H^0(\mathcal O_S (2C - B_{\sf red})) \to H^0(\omega_C^{\otimes 2}) \to H^1(\mathcal O_S(C-B_{\sf red}))  \to 0.
$$
\end{proposition}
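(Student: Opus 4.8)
The plan is to derive both assertions directly from the long exact cohomology sequence of (\ref{exact}), once the cohomology of the middle sheaf $\mathcal O_S(2C - B_{\sf red})$ is computed and its restriction to $C$ is identified. First I would use that $E + \mathring E = B_{\sf red}$, so that $\mathcal O_S(2C - B_{\sf red}) \cong \mathcal O_S(H + \mathring H)$, and recall from the discussion preceding the statement that a general member of $\vert H + \mathring H\vert$ is a smooth integral curve of positive self-intersection, whence $h^i(\mathcal O_S(H+\mathring H)) = 0$ for $i \geq 1$. Riemann--Roch on the K3 surface $S$ then gives
$$h^0(\mathcal O_S(2C - B_{\sf red})) = \chi(\mathcal O_S(2C - B_{\sf red})) = 2 + \tfrac12(2C - B_{\sf red})^2 = 2 + \tfrac12\bigl(8(g-1) - 2t\bigr) = 4g - t - 2,$$
so that $\dim\vert 2C - B_{\sf red}\vert = 4g - t - 3$, which is the first assertion.

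For the second assertion I would identify the restricted sheaf: since $C$ is smooth, general in $\vert\mathcal L\vert$ and disjoint from $B_{\sf red}$, the line bundle $\mathcal O_C(B_{\sf red})$ is trivial, and adjunction on the K3 surface gives $\mathcal O_C(C) \cong \omega_C$; hence $\mathcal O_C(2C - B_{\sf red}) \cong \omega_C^{\otimes 2}$. Feeding this into the long exact sequence associated with (\ref{exact}) yields
$$0 \to H^0(\mathcal O_S(C - B_{\sf red})) \to H^0(\mathcal O_S(2C - B_{\sf red})) \to H^0(\omega_C^{\otimes 2}) \to H^1(\mathcal O_S(C - B_{\sf red})) \to H^1(\mathcal O_S(2C - B_{\sf red})) \to \cdots,$$
and the term $H^1(\mathcal O_S(2C - B_{\sf red})) = H^1(\mathcal O_S(H + \mathring H))$ vanishes by the first step, so the sequence terminates exactly as in the statement.

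The argument is essentially bookkeeping, so there is no serious obstacle; the only points requiring care are where the hypotheses enter. The genuine use of the constraints on $(g, t, \ell)$ is the strict positivity $(2C - B_{\sf red})^2 = 8(g-1) - 2t > 0$, which is what forces the vanishing $h^i(\mathcal O_S(H+\mathring H)) = 0$, $i \geq 1$; this positivity fails precisely in the excluded borderline case $g = 3$, $\ell = 2$, where the self-intersection drops to $0$, explaining the standing assumption $g \geq 4$ there. The other thing to keep in mind is that a general $D \in \vert H + \mathring H\vert$ must be integral — not merely connected — for the standard K3 vanishing to apply, and this is guaranteed by the base-point-freeness of both $\vert H\vert$ and $\vert\mathring H\vert$ established earlier together with the fact that $\vert H + \mathring H\vert$ is not composed with a pencil. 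Assembling these facts completes the proof.
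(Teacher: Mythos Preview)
Your proof is correct and follows essentially the same approach as the paper: the paper derives the proposition directly from the vanishing $h^i(\mathcal O_S(H+\mathring H)) = 0$ for $i \geq 1$, established in the paragraph immediately preceding the statement via base-point-freeness of $\vert H \vert$ and $\vert \mathring H \vert$ and the positivity $(2C - B_{\sf red})^2 > 0$. You have simply made the Riemann--Roch bookkeeping and the identification $\mathcal O_C(2C - B_{\sf red}) \cong \omega_C^{\otimes 2}$ explicit, which the paper leaves to the reader.
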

The linear system $\vert C - B_{\mathsf{red}} \vert$ also deserves some observations. Since we are dealing with a general standard triple $(S, \mathcal L, \mathcal E)$, we know that $\vert C \vert$ defines a morphism
$$
f_{\vert C \vert}: S \to \mathbb P^g
$$
which is the contraction $\nu: S \to \overline S$, composed with  the embedding $\overline S \subset \mathbb P^g$  defined by $\vert \nu_* C \vert$. Since a general $C$ is disjoint
from $B$, $\vert \nu_*C \vert$ is a linear system of Cartier divisors. Let $\mathcal I_{\Sing \overline S}$ be the ideal sheaf of $\Sing \overline S$, it is clear that the natural map
$$
f^*_{\vert C \vert}: H^0(\mathcal I_{\Sing \overline S}(1)) \to H^0(\mathcal O_S(C - B_{\mathsf{red}}))
$$
is an isomorphism. Then, considering the above exact sequence (\ref{exact}),  we have 
\begin{equation}
h^0(\mathcal O_S(C - B_{\mathsf{red}})) - h^1(\mathcal O_S(C - B_{\mathsf{red}})) = \chi(\mathcal O_S(2C - B_{\mathsf{red}})) - \chi(\omega^{\otimes 2}_C) = g + 1 - t.
\end{equation}
This implies the next property.
\begin{proposition}  It holds  $h^1(\mathcal O_S(C - B_{\mathsf{red}})) = 0$ if and only if $h^0(\mathcal O_S(C - B_{\mathsf{red}})) = g + 1 - t$, that is, the points of $\Sing \overline S$ are linearly independent in $\mathbb P^g$.
\end{proposition}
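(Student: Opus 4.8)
The plan is to read off the equivalence directly from the numerical identity
\[
h^0(\mathcal O_S(C - B_{\mathsf{red}})) - h^1(\mathcal O_S(C - B_{\mathsf{red}})) = g + 1 - t
\]
just established, and then to reinterpret the equality case geometrically. Since $h^1(\mathcal O_S(C-B_{\mathsf{red}})) \geq 0$, this identity gives $h^0(\mathcal O_S(C-B_{\mathsf{red}})) \geq g+1-t$, with equality if and only if $h^1(\mathcal O_S(C-B_{\mathsf{red}})) = 0$. This already proves the ``if and only if'' part of the statement, so it remains only to identify the condition $h^0(\mathcal O_S(C-B_{\mathsf{red}})) = g+1-t$ with the linear independence in $\mathbb P^g$ of the points of $\Sing \overline S$.

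For the latter I would invoke the isomorphism $f^*_{\vert C \vert}: H^0(\mathcal I_{\Sing \overline S}(1)) \to H^0(\mathcal O_S(C-B_{\mathsf{red}}))$ recorded above, by which $h^0(\mathcal O_S(C-B_{\mathsf{red}}))$ equals the dimension of the space of hyperplanes of $\mathbb P^g$ containing $Z := \Sing \overline S$. Recall that, since the connected components $N_1, \dots, N_t$ of $\Supp B$ are pairwise disjoint and each $N_j$ is contracted by $\nu$ (hence by $f_{\vert C \vert}$) to a single rational double point of the type recorded in the theorem above, $Z$ is a reduced set of exactly $t$ points of $\mathbb P^g$, so $h^0(\mathcal O_Z(1)) = t$. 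From $0 \to \mathcal I_Z(1) \to \mathcal O_{\mathbb P^g}(1) \to \mathcal O_Z(1) \to 0$ together with $h^0(\mathcal O_{\mathbb P^g}(1)) = g+1$ and $h^1(\mathcal O_{\mathbb P^g}(1)) = 0$ one gets $h^0(\mathcal I_Z(1)) = g+1-t + h^1(\mathcal I_Z(1))$; and $h^1(\mathcal I_Z(1)) = 0$, i.e. $Z$ imposes $t$ independent linear conditions, is precisely the statement that the $t$ points span a $\mathbb P^{t-1}$, which is the meaning of ``the points of $\Sing \overline S$ are linearly independent''. Combining the two steps yields the proposition.

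I expect no genuine obstacle here: the argument is pure bookkeeping on the lemmas and propositions already in place. The only point meriting a word of care is the claim that $\mathcal I_{\Sing \overline S}$ is the ideal sheaf of $t$ distinct reduced points — so that $h^0(\mathcal O_Z(1)) = t$ — which follows immediately from the disjointness of the $N_j$ and from the description of $\Sing \overline S$ given after the Nikulin classification theorem.
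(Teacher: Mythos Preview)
Your proof is correct and follows exactly the approach the paper intends: the paper gives no separate proof but simply writes ``This implies the next property'' after recording the isomorphism $f^*_{\vert C\vert}$ and the identity $h^0-h^1=g+1-t$, leaving the reader to do precisely the bookkeeping you have written out. You have made explicit the elementary step linking $h^0(\mathcal I_Z(1))=g+1-t$ to linear independence of the $t$ reduced points, which the paper takes for granted.
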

On the other hand consider the commutative diagram
\begin{equation}
\begin{CD} \label{CD}
@. 0 \\
@.     @VVV \\
@. {H^0(\mathcal O_S(C - B_{\mathsf{red}})) } \\
@. @VVV \\
{H^0(\mathcal O_S( H)) \otimes H^0(\mathcal O_S(\mathring H))} @>{\mu_S}>> {H^0(\mathcal O_S( H+\mathring H))} \\
@V{\rho_H \otimes \rho_{\mathring H}}VV @V{\rho_C}VV \\
{H^0(\omega_C \otimes \eta^{-1}) \otimes H^0(\omega_C \otimes \eta)} @>{\mu_C}>> {H^0(\omega_C^{\otimes 2})} \\
@. @VVV \\
@. {H^1(\mathcal O_S(C - B_{\mathsf{red}}))} \\
@. @VVV  \\
@. 0
\end{CD}
\end{equation}
where $\mu_S$ and $\mu_C$ are the multiplication maps and the vertical arrows are the restriction maps. It follows from Lemma (\ref{one1}) that  $\rho_H \otimes \rho_{\mathring H}$ is an isomorphism.   The next property is clear.
\begin{proposition} \label{prop1} If $\mu_C$ is surjective then $h^1(\mathcal O_S(C - B_{\mathsf{red}})) = 0$ i.e. $\rho_C$ is surjective. \end{proposition}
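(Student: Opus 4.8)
The plan is to read off the conclusion from a short chase in diagram (\ref{CD}). First I would record the bookkeeping identity $H + \mathring H \sim (C-E) + (C-\mathring E) \sim 2C - B_{\mathsf{red}}$, which is immediate from the definition of $\mathring E$ (namely $E + \mathring E \sim B_{\mathsf{red}}$). Consequently the right-hand vertical column of (\ref{CD}) is exactly the long exact cohomology sequence attached to the exact sequence (\ref{exact}), as displayed in the proposition preceding (\ref{CD}); in particular $\rho_C$ is the restriction map $H^0(\mathcal O_S(2C - B_{\mathsf{red}})) \to H^0(\omega_C^{\otimes 2})$ and its cokernel is $H^1(\mathcal O_S(C - B_{\mathsf{red}}))$. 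This already explains why the two formulations in the statement agree: "$h^1(\mathcal O_S(C - B_{\mathsf{red}})) = 0$" is literally equivalent to "$\rho_C$ is surjective".

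Next I would exploit commutativity of the central square of (\ref{CD}), i.e. $\rho_C \circ \mu_S = \mu_C \circ (\rho_H \otimes \rho_{\mathring H})$. By the remark recalled just above diagram's proposition, which rests on Lemma (\ref{one1}), the map $\rho_H \otimes \rho_{\mathring H}$ is an isomorphism, hence surjective. Therefore
\[
\operatorname{Im}(\mu_C) \;=\; \operatorname{Im}\bigl(\mu_C \circ (\rho_H \otimes \rho_{\mathring H})\bigr) \;=\; \operatorname{Im}(\rho_C \circ \mu_S) \;\subseteq\; \operatorname{Im}(\rho_C).
\]
If $\mu_C$ is surjective, the left-hand side is all of $H^0(\omega_C^{\otimes 2})$, so $\operatorname{Im}(\rho_C) = H^0(\omega_C^{\otimes 2})$; thus $\rho_C$ is surjective and $h^1(\mathcal O_S(C - B_{\mathsf{red}})) = 0$.

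There is essentially no obstacle: the only facts needed are the two linear-equivalence identities above, the exactness of the four-term sequence (already established in the proposition preceding (\ref{CD})), and the fact that $\rho_H \otimes \rho_{\mathring H}$ is an isomorphism (already noted there). Granting these, the argument is a one-line diagram chase, and no vanishing theorem nor any genericity of $C$ beyond what is already used to construct (\ref{CD}) enters. The only point on which I would be slightly careful in writing it up is to state precisely that the connecting map $H^0(\omega_C^{\otimes 2}) \to H^1(\mathcal O_S(C - B_{\mathsf{red}}))$ is the cokernel of $\rho_C$, so that surjectivity of $\rho_C$ and the vanishing of $h^1$ are tautologically the same condition.
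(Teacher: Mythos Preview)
Your argument is correct and is precisely the diagram chase the paper has in mind: the paper states the proposition as ``clear'' immediately after recording that $\rho_H \otimes \rho_{\mathring H}$ is an isomorphism, and your write-up just makes explicit the one-line chase $\operatorname{Im}(\mu_C) = \operatorname{Im}(\rho_C \circ \mu_S) \subseteq \operatorname{Im}(\rho_C)$ together with the identification of $\operatorname{coker}(\rho_C)$ with $H^1(\mathcal O_S(C - B_{\mathsf{red}}))$ from the four-term exact sequence.
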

Since $\chi (\mathcal O_S (C - B_{\mathsf{red}}) = g + 1 - t$ let us point out that $\mu_C$ is \it not surjective \rm if  
\begin{equation}
g < t - 1.
\end{equation}
 We do not further investigate the diagram, for our applications these results suffice.
   \section{Views on the Mukai maps in level $\ell$}
  In this section we only put in large the picture we have outlined in the introduction. This picture concerns the maps in (\ref{mukai}) and (\ref{lmukai}), that is, the Mukai map
$$
m_g: \mathcal P_g \to \mathcal M_g
$$
and the level $\ell$ Mukai maps
$$
r_{g,\ell}: \mathcal P^{\perp}_{g,\ell} \to \mathcal R_{g, \ell}.
$$
These maps, and the involved moduli spaces, have been previously considered. We recall that the points of $\mathcal P_g$ are the elements $[S, \mathcal L, C]$ such that $[S, \mathcal L] \in \mathcal F_g$ and $C \in \vert \mathcal L \vert$. The Mukai map
$m_g$ is the natural forgetful map. We have
 \begin{enumerate} \it
\item $m_g$ is dominant for $g \leq 9$,  
\item $m_g$ is not dominant for $g = 10$,
\item $m_g$ is birational for $g = 11$,
\item $m_g$ has $1$-dimensional fibre for $g = 12$.
\item $m_g$ is generically injective for $g \geq 13$.
\end{enumerate}
Thus $m_g$ has not maximal rank for $g = 10, 12$. It is indeed known that a general $[C] \in m_{10}(\mathcal P_{10})$ is a linear section $C$ of the $G_2$ variety $W \subset \mathbb P^{13}$, \cite{Mu}. Hence the family of $2$-dimensional linear sections of $W$ through $C$ is a $\mathbb P^3$. It turns out from this fact that the fibre of $m_{10}$ at $[C]$ is $3$-dimensional. Then
$m_{10}(\mathcal P_{10})$ has codimension $1$. Genus $12$ Fano threefolds play a similar role, then a general fibre of $m_{12}$ is a rational curve. \par  In this perspective, asking about the connections between the moduli space $\mathcal F^{\perp}_{g, \ell}$, of level $\ell$ K3 surfaces of genus $g$, and $\mathcal R_{g, \ell}$ is, as observed,  natural. For a general point  $[S, \mathcal L, \mathcal E] \in \mathcal F^{\perp}_{g, \ell}$ one can ask if $(C, \eta)$, with $C \in \vert \mathcal L \vert$ and $\eta = \mathcal E \otimes \mathcal O_C$, defines a general point of $\mathcal R_{g, \ell}$. More precisely recall that 
  $\mathcal P^{\perp}_{g, \ell}$  is  the moduli space of $4$-tuples $(S, \mathcal L, \mathcal E, C)$ such that  $[S, \mathcal L, \mathcal E] \in \mathcal F^{\perp}_{g,\ell}$ and $C \in \vert \mathcal L \vert$.  
 The level $\ell$ Mukai $ r_{g,\ell}: \mathcal P^{\perp}_{g, \ell} \to \mathcal R_{g,\ell}$ is the morphism sending $[S, \mathcal L, \mathcal E, C] \in \mathcal P^{\perp}_{g, \ell}$  to the point $[C, \eta_C] \in \mathcal R_{g, \ell}$, where $\eta$ is $\mathcal E \otimes \mathcal O_C$. About the possible dominance of the  map $r_{g,\ell}$ we have:   
 \begin{enumerate} \it
 \item $3g-3 = \dim \mathcal R_{g,2} \leq \dim \mathcal P^{\perp}_{g,  2 } = 11 + g$ iff $g \leq 7$. \medskip
 \item $3g-3 = \dim \mathcal R_{g,3} \leq \dim \mathcal P^{\perp}_{g,  3 } = \ 7 + g$ iff $g \leq 5$. \medskip
 \item $3g-3 = \dim \mathcal R_{g,4} \leq \dim \mathcal P^{\perp}_{g,  4 } = \ 5 + g $ iff $g \leq 4$. \medskip
 \item $3g-3 = \dim \mathcal R_{g,5} \leq \dim \mathcal P^{\perp}_{g, \color {blue} 5 } = \ 3 + g$ iff $g \leq 3$. \medskip
 \item $3g-3 = \dim \mathcal R_{g,6} \leq \dim \mathcal P^{\perp}_{g,  6 } = $  \  $3 + g$ iff $g \leq 3$.  \medskip
 \item $3g-3 = \dim \mathcal R_{g,7} \leq \dim \mathcal P^{\perp}_{g,  7 } = \ 1 + g$ iff $g \leq 2$. \medskip
 \item $3g-3 = \dim \mathcal R_{g,8} \leq \dim \mathcal P^{\perp}_{g,  8 } = \ 1 + g$ iff $g \leq 2$. \medskip
  \end{enumerate}
These issues have not been systematically considered but for $\ell = 2$.   We close this expository section with a summary on what happens for $\ell = 2, 3$.
\subsection { \sf The picture for $\ell = 2$} 
 We have $3g-3 = \dim \mathcal M_g \leq \dim \mathcal P^{\perp}_{g,2} = 11 + g$ iff $g \leq 7$. Again, $r_{g,2}$ behaves unexpectedly near the value of transition, which is now $g = 7$.
  \begin{enumerate} \it
\item $r_{g,2}$ is dominant for $g \leq 5$,  \medskip
\item $r_{g,2}$ is not dominant for $g = 6$, \medskip
\item $r_{g,2}$ is birational for $g = 7$, \medskip
\item $r_{g,2}$ has not finite fibres for $g = 8$. \medskip
\item $r_{g,2}$ is generically injective for $g \geq 9$.
\end{enumerate}
These surfaces are known as (standard) \it Nikulin surfaces. \rm Cases (1), (2), (3) are treated in \cite{FV, FV1},  the remaining ones, (standard and non standard), in \cite{ KLV1, KLV2}. Notice that \it $r_{g,2}$ is not of maximal rank for $g = 6,8$. \rm In genus $6$  the condition $C \subset S$ implies that the following multiplication map is not an isomorphism as expected:
\begin{equation}
\mu: {\rm Sym}^2 H^0(\omega_C \otimes \eta_C) \to H^0(\omega_C^{\otimes 2}).
\end{equation}
Then $(C, \eta_C)$ does not define a general point of $\mathcal R_{g,2}$, see \cite{B}. We point out that, studying the two cases where $r_{g,2}$ has not maximal rank, two families of singular Fano threefolds appear. Their hyperplane sections are singular models $\overline S$ of general Nikulin surfaces $S$. The existence of these threefolds implies the failure of the maximal rank.  \medskip   \par \subsection{ \sf The picture for $\ell = 3$}
 We will prove that  $r_{g,3}$ behaves unexpectedly near $g = 5$:
  \begin{enumerate}  \it
\item $r^s_{g,3}$ is dominant for $g \leq 3$,  \medskip
\item $r^s_{g,3}$ has not maximal rank for $g = 4$, \medskip
\item $r^s_{g,3}$ is birational for $g = 5$, \medskip 
\item $r^s_{g,3}$ has not maximal rank for $g = 6$. \medskip
\end{enumerate}
\begin{remark} \rm  The case $g \geq 7$ should be considered for further investigation, addressing the generic injectivity. The (uni)rationality of $\mathcal R_{g,3}$ is known,  or elementary ,  for $g \leq 5$,   cfr. \cite{BaC, BaV, Ve1}.  We recall that $\mathcal R_{g,3}$ is of general type for $g \geq 12$ and of Kodaira dimension $\geq 19$ for $g = 11$,  \cite{CEFS}. Bruns proved in \cite{Br} that $\mathcal R_{8,3}$ is of general type.   The cases $g = 6, 7, 9, 10$ and partially $g =11$ are open. \end{remark}

  \section{The Mukai map in level $3$}
\medskip \par \subsection { \sf The case of genus $4$}
Let $[S, \mathcal L, \mathcal E, C] \in \mathcal P^{\perp}_{g, \ell}$ be general and $\ell = 3$, as in section 2, (\ref{CD}) we consider the commutative diagram   
 \begin{equation}
\begin{CD} 
{H^0(\mathcal O_S( H)) \otimes H^0(\mathcal O_S(\mathring H))} @>{\mu_S}>> {H^0(\mathcal O_S( H+\mathring H))} \\
@V{\rho_H \otimes \rho_{\mathring H}}VV @V{\rho_C}VV \\
{H^0(\omega_C \otimes \eta^{-1}) \otimes H^0(\omega_C \otimes \eta)} @>{\mu_C}>> {H^0(\omega_C^{\otimes 2})}. \\
\end{CD}
\end{equation}
 Since $\ell = 3$ we have $t = 6$ connected components of $\Supp B$. Then, by proposition (\ref{prop1}),  $\mu_C$ is \it not 
surjective \rm if $g < t - 1 = 5$. This is obvious for $g \leq 3$.   For $g = 4$ the dimension count suggests that  in $\mathcal R_{4,3}$ the map $\mu_C$ is not surjective in codimension $1$ .    \begin{proposition} Let $[C, \eta] \in \mathcal R_{4,3}$ be a general point then $\mu_C$ is surjective, moreover the locus of points such that $\mu_C$ is not surjective is an effective Cartier divisor
 in $\mathcal R_{4,3}$.
  \end{proposition}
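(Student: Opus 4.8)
The plan is to reduce both claims to the single assertion that $\mu_C$ is bijective for \emph{one} pair $[C,\eta]\in\mathcal R_{4,3}$, and then to prove that assertion.

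First observe that the relevant dimensions are constant on $\mathcal R_{4,3}$: for every $[C,\eta]$ one has $h^1(\omega_C\otimes\eta^{\pm1})=h^0(\eta^{\mp1})=0$, hence $h^0(\omega_C\otimes\eta^{\pm1})=3$ by Riemann--Roch, while $h^0(\omega_C^{\otimes2})=9$. Thus $H^0(\omega_C\otimes\eta^{-1})$, $H^0(\omega_C\otimes\eta)$ and $H^0(\omega_C^{\otimes2})$ are the fibres of vector bundles $\mathcal N$, $\mathcal M$, $\mathcal W$ over $\mathcal R_{4,3}$ (or a suitable finite cover of it) of ranks $3$, $3$, $9$, and $\mu_C$ is the fibre of a homomorphism $\mu\colon\mathcal N\otimes\mathcal M\to\mathcal W$. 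Since $\rank(\mathcal N\otimes\mathcal M)=9=\rank\mathcal W$, its determinant $\delta=\det\mu$ is a section of the line bundle $\det\mathcal W\otimes\det(\mathcal N\otimes\mathcal M)^{-1}$, and the locus where $\mu_C$ fails to be surjective is exactly the zero scheme of $\delta$. As $\mathcal R_{4,3}$ is irreducible, this zero scheme is either all of $\mathcal R_{4,3}$ or an effective Cartier divisor; it is nonempty, because whenever $[C,\eta]=r_{4,3}(S,\mathcal L,\mathcal E,C)$ arises from a level $3$ K3 surface one has $g=4<5=t-1$, so $\mu_C$ is not surjective there by Proposition \ref{prop1}, and such data exist since $\mathcal F^\perp_{4,3}\neq\emptyset$. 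Hence the whole proposition follows once $\mu_{C_0}$ is shown to be surjective --- equivalently bijective --- for a single $[C_0,\eta_0]$.

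A partial surjectivity valid at the general point comes from the base-point-free pencil trick. For $C$ a general genus $4$ curve one has $\operatorname{Jac}(C)[3]\cap(C-C)=\{0\}$: if $[p-q]$ were a nonzero $3$-torsion class with $p\neq q$, then $3p\sim 3q$, so $|3p|$ would be a $g^1_3$ and $p$ a total ramification point of a trigonal pencil, which a general genus $4$ curve does not possess. Consequently $\omega_C\otimes\eta^{\pm1}$ is globally generated for every nonzero $\eta\in\operatorname{Jac}(C)[3]$. Choosing a general pencil $V\subset H^0(\omega_C\otimes\eta^{-1})$, which is then base-point-free with line bundle $\omega_C\otimes\eta^{-1}$, the base-point-free pencil trick yields $0\to H^0(\eta^{2})\to V\otimes H^0(\omega_C\otimes\eta)\to H^0(\omega_C^{\otimes2})$; since $H^0(\eta^{2})=0$ this produces a $6$-dimensional subspace of the image of $\mu_C$, so $\rank\mu_C\ge6$ for general $[C,\eta]$. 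Writing $H^0(\omega_C\otimes\eta^{-1})=V\oplus\mathbb C\,s$, it only remains to gain the last $3$ dimensions, i.e.\ to verify that $s\cdot H^0(\omega_C\otimes\eta)$ meets that $6$-dimensional subspace in $0$.

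This last rank computation is the heart --- and the main obstacle --- of the proof: the parameter count merely \emph{predicts} a divisor, while one must genuinely realise surjectivity at some point. I would carry it out at a convenient special $[C_0,\eta_0]$, either (i) a point of the unirational family parametrizing $\mathcal R_{4,3}$ (cf.\ \cite{BaC, BaV, Ve1}), for which $\omega_{C_0}\otimes\eta_0^{\pm1}$, $\omega_{C_0}^{\otimes2}$ and $\mu_{C_0}$ can be written out explicitly and the resulting $9\times 9$ matrix checked to be nondegenerate, or (ii) a stable level $3$ curve $[C_0,\eta_0]$ in the boundary of a compactification of $\mathcal R_{4,3}$ --- for instance $C_0$ irreducible with one node and normalization of genus $3$, with an appropriate limit of the $3$-torsion --- for which $\omega_{C_0}$, $\omega_{C_0}\otimes\eta_0^{\pm1}$ and the multiplication decompose over the components and torsion data and $\mu_{C_0}$ becomes computable by hand. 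Once $\mu_{C_0}$ is bijective for such a point, $\delta$ is nonzero there, hence $\delta\not\equiv0$; so $\mu_C$ is bijective for $[C,\eta]$ general (lower semicontinuity of $\rank\mu$) and the zero scheme of $\delta$ is an effective Cartier divisor, which is exactly the two assertions.
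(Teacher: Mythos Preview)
Your determinantal framework is correct and is exactly the mechanism by which such loci become effective Cartier divisors (this is also the way the locus $\mathcal D_{g,\ell}$ is set up in \cite{CEFS}). The non-emptiness argument via Proposition~\ref{prop1} and $g=4<t-1=5$ is fine, and the base-point-free pencil trick, together with the observation that a general genus~$4$ curve has no total ramification point of a $g^1_3$, cleanly gives $\rank\mu_C\ge 6$ at a general point.

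The genuine gap is the step you yourself flag as ``the heart --- and the main obstacle'': you never actually exhibit a pair $[C_0,\eta_0]$ with $\mu_{C_0}$ bijective. You propose two strategies (an explicit $9\times 9$ determinant over a unirational parametrization, or a degeneration to a nodal curve) but carry out neither. Without this, the possibility $\delta\equiv 0$ is not excluded and neither claim of the proposition follows. The pencil trick only yields rank $\ge 6$, and there is no soft argument pushing it to $9$: passing from the pencil $V$ to the third section~$s$ is precisely where a genuine computation is needed.

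For comparison, the paper does not give a self-contained proof at all. It identifies the degeneracy locus with the divisor $\mathcal D_{4,3}$ of \cite{CEFS}, where the ``Torsion bundle conjecture~B'' is proved for low level and $g\le 16$, yielding divisoriality; and it invokes \cite[Theorem~1.7]{BaV} for the generic bijectivity of $\mu_C$. So your scaffolding is sound and more explicit than the paper's, but to close the argument you must either perform one of the two computations you outline or, as the paper does, cite \cite[Theorem~1.7]{BaV} for the existence of a single $[C_0,\eta_0]$ with $\mu_{C_0}$ an isomorphism.
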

   Indeed, for $g = 4$ and $\ell = 3$, this locus turns out to be the locus $\mathcal D_{g,\ell}$ defined in \cite{CEFS} p. 77. There, for low level $\ell \geq 3$ and for $g \leq16$,  the so defined \it Torsion bundle conjecture B \rm is proven, which implies that $\mathcal D_{4,3}$ is an effective Cartier divisor in $\mathcal R_{4,3}$.  Then the next theorem follows. Notice also that, for $g = 4$,  theorem 1.7 of \cite{BaV} implies that $\mu_C$ is an isomorphism for a general $(C, \eta)$.   
\begin{theorem} The map $r_{4,3}: \mathcal P^{\perp}_{4,3} \to \mathcal R_{4, 3}$ fails to be dominant. \end{theorem}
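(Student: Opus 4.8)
The plan is to show that every pair $(C,\eta)$ in the image of $r_{4,3}$ lies in the locus of $\mathcal R_{4,3}$ where the multiplication map $\mu_C$ fails to be surjective, and then to invoke the proposition stated just above the theorem, by which that locus is a \emph{proper} subvariety. Since $\mathcal P^{\perp}_{4,3}$ is irreducible, it suffices to treat a general $4$-tuple $[S,\mathcal L,\mathcal E,C]$: once the general point of the image is known to lie in that locus, the (irreducible) image of $r_{4,3}$ is contained in its closure and hence is not dense in $\mathcal R_{4,3}$.

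First I would observe that for a general $[S,\mathcal L,\mathcal E,C]\in\mathcal P^{\perp}_{4,3}$ the map $\mu_C\colon H^0(\omega_C\otimes\eta^{-1})\otimes H^0(\omega_C\otimes\eta)\to H^0(\omega_C^{\otimes 2})$, with $\eta=\mathcal E\otimes\mathcal O_C$, is not surjective. Indeed, for $\ell=3$ the number of connected components of $\Supp B$ is $t=6$, so $g=4<t-1=5$, which is precisely the situation excluded for surjectivity of $\mu_C$: by Proposition \ref{prop1}, surjectivity of $\mu_C$ would force $h^1(\mathcal O_S(C-B_{\mathsf{red}}))=0$, whereas the identity $h^0(\mathcal O_S(C-B_{\mathsf{red}}))-h^1(\mathcal O_S(C-B_{\mathsf{red}}))=g+1-t=-1$ established above gives $h^1(\mathcal O_S(C-B_{\mathsf{red}}))\geq 1$. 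Hence the image $r_{4,3}(\mathcal P^{\perp}_{4,3})$ is contained in the locus $\mathcal D\subset\mathcal R_{4,3}$ of moduli points $[C,\eta]$ for which $\mu_C$ is not surjective.

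To finish I would apply the preceding proposition: the general $[C,\eta]\in\mathcal R_{4,3}$ has $\mu_C$ an isomorphism, so $\mathcal D$ is a proper closed subset — in fact the effective Cartier divisor $\mathcal D_{4,3}$ of \cite{CEFS}, surjectivity at a general point being moreover a consequence of Theorem~1.7 of \cite{BaV} for $g=4$. Since $r_{4,3}(\mathcal P^{\perp}_{4,3})\subseteq\mathcal D\subsetneq\mathcal R_{4,3}$, the map $r_{4,3}$ is not dominant. Every step except the last is elementary cohomology on the K3 surface together with diagram (\ref{CD}); the one genuinely non-elementary ingredient, which I expect to be the main obstacle, is the \emph{properness} of $\mathcal D$ — that is, the existence of even a single pair $(C,\eta)$ with $\mu_C$ surjective — which is exactly what is provided by the Torsion bundle conjecture B of \cite{CEFS} (valid for $\ell\geq 3$, $g\leq 16$) or by \cite{BaV}.
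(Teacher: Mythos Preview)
Your proposal is correct and follows essentially the same approach as the paper: you show that for any $[S,\mathcal L,\mathcal E,C]\in\mathcal P^{\perp}_{4,3}$ the multiplication map $\mu_C$ fails to be surjective via the contrapositive of Proposition~\ref{prop1} and the Euler characteristic computation $\chi(\mathcal O_S(C-B_{\mathsf{red}}))=g+1-t=-1$, and then invoke the preceding proposition (based on \cite{CEFS} and \cite{BaV}) to conclude that the image lies in the proper divisor $\mathcal D_{4,3}$. The paper's own argument is the same, stated more tersely as ``Then the next theorem follows.''
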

\begin{remark} \rm The case $g = 4$ turns out to be of special interest. See the last section for a natural, presently conjectural, geometric interpretation. \end{remark}

\subsection {\sf The case of genus 5}
Differently from the case $g \leq 4$ the multiplication map
$$
\mu_C: H^0(\omega_C \otimes \eta) \otimes H^0(\omega_C \otimes \eta^{-1}) \to H^0(\omega_C ^{\otimes 2})
$$
can be surjective for $g \geq 5$ and a general point $[C, \eta] \in \mathcal R_{g,3}$. This property occurs in genus $g = 5$ and makes possible the proof of the next \it birationality theorem. \rm
\begin{theorem} \label{bir} The Mukai map $r_{5,3}: \mathcal P^{\perp}_{5,3} \to \mathcal R_{5,3}$ is birational. \end{theorem}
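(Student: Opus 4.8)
The plan is to construct a rational inverse to $r_{5,3}$. Since $\dim\mathcal P^{\perp}_{5,3}=7+5=12=3\cdot5-3=\dim\mathcal R_{5,3}$ and both spaces are irreducible ($\mathcal P^{\perp}_{5,3}$ being a $\mathbb P^{5}$-bundle over the irreducible $\mathcal F^{\perp}_{5,3}$), it suffices to prove that $r_{5,3}$ is generically injective: its image is then an irreducible $12$-dimensional subvariety of $\mathcal R_{5,3}$, hence all of it, so $r_{5,3}$ is birational. I would therefore fix a general $(S,\mathcal L,\mathcal E,C)\in\mathcal P^{\perp}_{5,3}$, put $\eta=\mathcal E\otimes\mathcal O_C$, and recover $(S,\mathcal L,\mathcal E,C)$ canonically from the level $3$ curve $(C,\eta)$.

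The tool is the main projective model $f=\iota\circ(p\times\mathring p)\colon S\to\mathbb P\times\mathring{\mathbb P}\cong\mathbb P^{3}\times\mathbb P^{3}\subset\mathbb P^{15}$ of Definition \ref{projmodel}; here $|H|=|C-E|$ and $|\mathring H|=|C-\mathring E|$ are base point free of dimension $g-2=3$. By Lemma \ref{one1} the restrictions $\rho_H,\rho_{\mathring H}$ are isomorphisms, so $p$ and $\mathring p$ cut on $C$ exactly the complete systems $|\omega_C\otimes\eta^{-1}|$ and $|\omega_C\otimes\eta|$, both $g^{3}_{8}$, whence $\Gamma:=f(C)\subset\mathbb P^{3}\times\mathbb P^{3}$ is determined by $(C,\eta)$ up to the $\PGL_4\times\PGL_4$-action. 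Identifying $H^{0}(\mathcal O_{\mathbb P^{3}\times\mathbb P^{3}}(1,1))$ with $H^{0}(\mathcal O_S(H))\otimes H^{0}(\mathcal O_S(\mathring H))$ (both factors being complete), restriction of $(1,1)$-forms to $\Gamma$ becomes $\mu_C\circ(\rho_H\otimes\rho_{\mathring H})$ and restriction to $f(S)$ becomes $\mu_S$; moreover, from $C\cdot E=C\cdot\mathring E=0$, $E^{2}=\mathring E^{2}=-4$ and $H\mathring H=2g+2-t$, the surface $f(S)$ has multidegree $(H^{2},H\mathring H,\mathring H^{2})=(4,6,4)$, the class in $\mathbb P^{3}\times\mathbb P^{3}$ of a complete intersection of four $(1,1)$-divisors.

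The crucial input is that $\mu_C$ is surjective for a general $[C,\eta]\in\mathcal R_{5,3}$. This is exactly where $g=5$ is special: for $g\le4$ Proposition \ref{prop1} together with $g<t-1=5$ rules out surjectivity (forcing the image of $r_{g,3}$ into the divisor $\mathcal D_{g,3}$ of \cite{CEFS}), while for $g=5$ the count $h^{0}(\omega_C\otimes\eta)\cdot h^{0}(\omega_C\otimes\eta^{-1})=16\ge12=h^{0}(\omega_C^{\otimes2})$ is tight and surjectivity must be shown by hand — I would exhibit one pair $(C_0,\eta_0)$ with $\mu_{C_0}$ surjective (a curve on an explicit standard genus $5$, level $3$ K3 surface, or a member coming from the known (uni)rationality of $\mathcal R_{5,3}$, cf.\ \cite{BaC,BaV,Ve1}) and conclude by semicontinuity. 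Once $\mu_C$ is surjective one gets $h^{0}(\mathcal I_\Gamma(1,1))=16-12=4$; and, by Proposition \ref{prop1}, $h^{1}(\mathcal O_S(C-B_{\mathsf{red}}))=0$, so since $h^{0}(\mathcal O_S(C-B_{\mathsf{red}}))-h^{1}(\mathcal O_S(C-B_{\mathsf{red}}))=g+1-t=0$ we obtain $h^{0}(\mathcal O_S(C-B_{\mathsf{red}}))=0$, hence $\rho_C$ is an isomorphism, diagram \eqref{CD} forces $\mu_S$ onto, and $h^{0}(\mathcal I_{f(S)}(1,1))=16-12=4$ as well. As $H^{0}(\mathcal I_{f(S)}(1,1))\subseteq H^{0}(\mathcal I_\Gamma(1,1))$, the two $4$-dimensional spaces coincide. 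Granting that four general members of this system meet in the expected codimension $4$, they form a regular sequence cutting out a Cohen–Macaulay surface of class $(4,6,4)=[f(S)]$ which, containing the irreducible $f(S)$, equals $f(S)$; by adjunction and Lefschetz it is a K3 surface with at worst rational double points. Thus the common zero locus $\widehat S$ in $\mathbb P^{3}\times\mathbb P^{3}$ of the $(1,1)$-forms vanishing on $\Gamma$ — intrinsically attached to $(C,\eta)$ — equals $f(S)$; its minimal resolution is $S$, and $\mathcal L=\mathcal O_S(C)$ is recovered from the strict transform of $\Gamma$ and $\mathcal E=\mathcal O_S(E)$ from $E=C-H$, $H$ being the pullback of $\mathcal O_{\mathbb P^{3}\times\mathbb P^{3}}(1,0)$. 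This reconstruction being canonical, two $4$-tuples over the same general $[C,\eta]$ coincide, so $r_{5,3}$ is generically injective, hence birational.

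The hard part is the surjectivity of $\mu_C$ for the general pair: it is a genuinely new phenomenon at $g=5$, false for $g\le4$, so it cannot be obtained by a bare deformation argument and requires an explicit example together with semicontinuity. The remaining points, all open conditions satisfied by the general member, are: that $f$ is birational onto its image, so that $S$ — and not a quotient — is reconstructed (this follows from the known description of the projective models of these surfaces, cf.\ \cite{G,GS2,vGS,N1}); that four general $(1,1)$-forms through $\Gamma$ meet in the expected codimension $4$, a Bertini–Koszul computation on $\mathbb P^{3}\times\mathbb P^{3}$ controlled by the vanishing $h^{1}(\mathcal O_S(C-B_{\mathsf{red}}))=0$; and that the reconstructed Picard lattice has rank $r+1=13$, so that $\widehat S$ defines a point of the standard component $\mathcal F^{\perp}_{5,3}$ and the section is well defined near $[C,\eta]$.
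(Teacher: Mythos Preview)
Your architecture is the paper's: embed $C$ in $\mathbb P^3\times\mathbb P^3$ via $|\omega_C\otimes\eta^{-1}|\times|\omega_C\otimes\eta|$, identify the $4$-dimensional space of $(1,1)$-forms through it with $\Ker\mu_S$, recover $f(S)$ as their common zero locus, and read off the inverse. Your reversal of the order (first $\mu_C$, then $\mu_S$ via $\rho_C$ being an isomorphism) is fine once both are known. But the two steps you label ``open conditions'' and defer are exactly the technical content of the proof, and neither is supplied.

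\emph{Surjectivity of $\mu_C$.} You promise to ``exhibit one pair $(C_0,\eta_0)$'' and appeal to semicontinuity, but produce nothing. Any natural example lies on a standard level $3$ K3, and then one still has to prove $\mu_{C_0}$ onto for that curve --- which is precisely the problem. The paper does not argue by example: it proves $\mu_S$ surjective directly for every general $(S,\mathcal L,\mathcal E)$ by restricting to a smooth $D\in|H|$. One first checks that $|H|$ is non-hyperelliptic (a Picard-lattice argument ruling out elliptic pencils of low $C$-degree) and that $h^0(\mathcal O_D(\mathring H-H))=0$, so $\mathcal O_D(\mathring H)\cong\omega_D(b)$ with $b$ a non-effective degree $2$ divisor on the genus $3$ curve $D$. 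The auxiliary multiplication $\mu_D\colon H^0(\omega_D)\otimes H^0(\omega_D(b))\to H^0(\omega_D^{\otimes2}(b))$ is then shown surjective via Castelnuovo--Mumford $3$-regularity of a length $6$ scheme $Z\subset\mathbb P^2$; a dimension count on kernels in the square relating $\mu_S$ and $\mu_D$ (exhibiting a $4$-dimensional $V=\langle s\rangle\otimes H^0(\mathcal O_S(\mathring H))$ with $V\cap\Ker\mu_S=0$) finishes the job. This chain, including the lemmas $h^i(\mathcal O_S(2H-\mathring H))=0$ needed along the way, is the heart of the matter and is absent from your sketch.

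\emph{Properness of the intersection.} The vanishing $h^1(\mathcal O_S(C-B_{\mathsf{red}}))=0$ tells you only that there are exactly four independent $(1,1)$-forms; it says nothing about their being a regular sequence on $\mathbb P^3\times\mathbb P^3$, so ``Bertini--Koszul controlled by this vanishing'' is not an argument. The paper settles this by cutting with the reducible divisor $\mathbb D=(P\times\mathbb P^3)+(\mathbb P^3\times\mathring P)$ and verifying $\mathbb D\cdot\Sigma=\mathbb D\cdot f(S)$ scheme-theoretically: the component $\mathbb P^{11}\cap(P\times\mathbb P^3)$ is identified with $D$ via the determinantal resolution $0\to\mathcal O_{\mathbb P^3}(-1)^{\oplus3}\to\mathcal O_{\mathbb P^3}^{\oplus4}\to\omega_D(b)\to0$ of the non-hyperelliptic genus $3$ curve $\mathring p(D)\subset\mathbb P^3$ (Homma), matching degrees. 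The paper also proves that $p\times\mathring p$ is an \emph{embedding}, not merely birational, by analysing which $(-2)$-curves in $\mathbb M_3$ each of $p$, $\mathring p$ contracts; your citation of ``known projective models'' does not cover this.
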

 Before proving  it we cannot avoid a long series of preliminaries. We will always assume that $ [S, \mathcal L, \mathcal E, C] \in \mathcal P^{\perp}_{5,3} $ is a  \it general point\rm, in particular  $\Pic S \cong \mathbb Z c \oplus \mathbb M_3$. Let
\begin{equation}
0 \to \mathcal O_S(H + \mathring H - C) \to \mathcal O_S(H + \mathring H) \to \omega_C^{\otimes 2} \to 0.
\end{equation}
be the standard exact sequence, at first we point out the following fact.  
\begin{proposition} The associated long exact sequence is
\begin{equation} \label{iso}
0 \to H^0(\mathcal O_S(H + \mathring H)) \stackrel {\rho_C} \to H^0(\omega_C^{\otimes 2}) \to 0.
\end{equation}
\end{proposition}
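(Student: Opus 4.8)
\emph{The plan} is first to identify $\ker\rho_C$. By remark (1) after Definition \ref{projmodel} one has $\mathcal O_S(H+\mathring H)\cong\mathcal O_S(2C-B_{\mathsf{red}})$, so the kernel sheaf of the restriction $\mathcal O_S(H+\mathring H)\to\omega_C^{\otimes 2}$ is $\mathcal O_S(2C-B_{\mathsf{red}}-C)=\mathcal O_S(C-B_{\mathsf{red}})$. Thus the long exact sequence under consideration is precisely the four-term sequence already recorded in the Proposition preceding diagram (\ref{CD}), namely
\[
0\to H^0(\mathcal O_S(C-B_{\mathsf{red}}))\to H^0(\mathcal O_S(H+\mathring H))\xrightarrow{\ \rho_C\ }H^0(\omega_C^{\otimes 2})\to H^1(\mathcal O_S(C-B_{\mathsf{red}}))\to 0,
\]
so proving the statement is the same as proving $h^0(\mathcal O_S(C-B_{\mathsf{red}}))=h^1(\mathcal O_S(C-B_{\mathsf{red}}))=0$.

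Second, I would reduce both vanishings to the single emptiness $|C-B_{\mathsf{red}}|=\emptyset$. Since $\ell=3$ forces $t=6$ and $\mathsf T=6\mathsf A_2$ by Nikulin's theorem recalled above, the connected components $N_1,\dots,N_6$ of $B_{\mathsf{red}}$ are pairwise disjoint nodal rational curves, so $B_{\mathsf{red}}^2=\sum_j N_j^2=-2t=-12$ and $(C-B_{\mathsf{red}})^2=(2g-2)+B_{\mathsf{red}}^2=8-12=-4$. Riemann--Roch on the K3 surface $S$ then gives $\chi(\mathcal O_S(C-B_{\mathsf{red}}))=2+\tfrac12(-4)=0$, while $(B_{\mathsf{red}}-C)\cdot\mathcal L=-8<0$ with $\mathcal L$ nef shows $B_{\mathsf{red}}-C$ is not effective, hence $h^2(\mathcal O_S(C-B_{\mathsf{red}}))=h^0(\mathcal O_S(B_{\mathsf{red}}-C))=0$ by Serre duality. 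Consequently $h^1=h^0+h^2-\chi=h^0$, and the whole statement reduces to $h^0(\mathcal O_S(C-B_{\mathsf{red}}))=0$.

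Third — the heart of the proof — I would show $|C-B_{\mathsf{red}}|=\emptyset$, exploiting that the general $4$-tuple has $\Pic S=\mathbb Z\mathcal L\oplus\mathbb M_S$. Assume $\Delta\in|C-B_{\mathsf{red}}|$. From $\mathcal L\cdot B_i=0$, $B_i^2=-2$, and $B_i\cdot B_j=1$ exactly when $B_j$ is the $\mathsf A_2$-partner of $B_i$, one computes $\Delta\cdot B_i=1$ for every $i$ and $\Delta\cdot\mathcal L=\mathcal L^2=8$. Write $\Delta=\Delta_0+\sum_i c_iB_i$ with $c_i\in\mathbb Z_{\ge 0}$ and $\Delta_0\ge 0$ having no $B_i$ among its components; then $\Delta_0\cdot\mathcal L=8$. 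Since the curves contracted by $f_{|C|}$ (equivalently by $\nu$) are exactly the $B_i$, every component $\Gamma$ of $\Delta_0$ has $\mathcal L\cdot\Gamma>0$, and the orthogonal splitting of $\Pic S$ with $\mathcal L^2=8$ forces $\mathcal L\cdot\Gamma\in 8\mathbb Z_{>0}$. Hence $\Delta_0$ is a single integral curve $\Gamma$ of multiplicity one with $\mathcal L\cdot\Gamma=8$, so $\Delta_0^2=\Gamma^2\ge -2$. On the other hand, expanding $(\Delta-\sum_i c_iB_i)^2$ with the intersection numbers above gives
\[
\Delta_0^2=-4-2\sum_i c_i-2\!\!\sum_{\mathsf A_2\text{-chains }\{i,i'\}}\!\!\bigl(c_i^2+c_{i'}^2-c_ic_{i'}\bigr)\ \le\ -4,
\]
because each $c_i^2+c_{i'}^2-c_ic_{i'}\ge 0$. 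This contradicts $\Delta_0^2\ge-2$, so $|C-B_{\mathsf{red}}|=\emptyset$, whence $h^0(\mathcal O_S(C-B_{\mathsf{red}}))=0$, then $h^1(\mathcal O_S(C-B_{\mathsf{red}}))=0$, and $\rho_C$ is an isomorphism.

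I expect the only genuinely delicate step to be the third: it uses in an essential way the generality of $[S,\mathcal L,\mathcal E,C]$ (so that $\Pic S$ is \emph{exactly} $\mathbb Z\mathcal L\oplus\mathbb M_S$, which is what makes $\mathcal L$-degrees divisible by $8$ and pins down the $\nu$-contracted curves) together with the elementary positivity of the quadratic form on each $\mathsf A_2$-chain. The remaining bookkeeping — $B_{\mathsf{red}}^2=-2t$, the non-effectivity of $B_{\mathsf{red}}-C$, and the four-term sequence itself — is routine and already available in the excerpt.
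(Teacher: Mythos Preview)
Your proof is correct, and the overall reduction (identifying the kernel as $\mathcal O_S(C-B_{\mathsf{red}})$, killing $h^2$ by Serre duality, and using $\chi=0$ to reduce everything to $h^0(\mathcal O_S(C-B_{\mathsf{red}}))=0$) is exactly what the paper does in the lemma immediately following the proposition.

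Where you diverge is in the proof that $|C-B_{\mathsf{red}}|=\emptyset$. The paper argues indirectly: an element $A$ of this system would be disconnected (from $\chi=0$), and then, by projecting to $\mathbb P^5$ via $|C|$ and analysing the resulting degree~$8$ hyperplane section with six singular points, one extracts an irreducible component $R$ with $0<C\cdot R<8$, contradicting the divisibility $C\cdot R\in 8\mathbb Z$ coming from $\Pic S=\mathbb Z\mathcal L\oplus\mathbb M_3$. Your argument is more direct and purely numerical: strip off the $B_i$'s from a hypothetical $\Delta$, use the same divisibility to force the remainder $\Delta_0$ to be a single integral curve (hence $\Delta_0^2\ge -2$), and then compute $\Delta_0^2\le -4$ via the positive-definiteness of the $\mathsf A_2$ form. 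Both arguments rest on the same two lattice facts (degrees in $8\mathbb Z$, and the $B_i$ being the only $(-2)$-curves orthogonal to $\mathcal L$), but yours avoids the disconnectedness step, the case split on $CA_2$, and the geometric discussion of $\Sing\overline S$; it is shorter and would transplant more easily to other $(g,\ell)$.
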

Since $H + \mathring H - C \sim C - B_{\mathsf{red}}$, the next lemma implies the previous statement.
\begin{lemma}\label{initial}  It holds   $h^i(\mathcal O_S(C - B_{\mathsf{red}})) = 0$ for $i \geq 0$. \end{lemma}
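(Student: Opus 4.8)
The plan is to show all cohomology of $\mathcal O_S(C - B_{\mathsf{red}})$ vanishes by first controlling $h^0$ and then deducing $h^1$ from Riemann--Roch, with $h^2$ coming from Serre duality. Recall that for $g = 5$, $\ell = 3$ we have $t = 6$ and $r = 12$, and since $C^2 = 2g-2 = 8$ while each $B_i^2 = -2$ and $C\cdot B_i = 0$, we compute $(C - B_{\mathsf{red}})^2 = C^2 + B_{\mathsf{red}}^2 = 8 + B_{\mathsf{red}}^2$. Here $B_{\mathsf{red}} = N_1 + \dots + N_6$ with the $N_j$ mutually disjoint (they are the connected components of $\Supp B$), each $N_j$ a tree of $-2$-curves of arithmetic genus $0$, so each $N_j^2 = 2p_a(N_j) - 2 = -2$, giving $B_{\mathsf{red}}^2 = -12$ and $(C - B_{\mathsf{red}})^2 = -4$. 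Hence $\chi(\mathcal O_S(C - B_{\mathsf{red}})) = 2 + \tfrac12(C-B_{\mathsf{red}})^2 = 0$, so it suffices to prove $h^0 = h^2 = 0$; then $h^1 = 0$ automatically.

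For $h^2$: by Serre duality $h^2(\mathcal O_S(C - B_{\mathsf{red}})) = h^0(\mathcal O_S(B_{\mathsf{red}} - C))$. Since $C$ is nef with $C^2 > 0$ and $C\cdot(B_{\mathsf{red}} - C) = -C^2 = -8 < 0$, the class $B_{\mathsf{red}} - C$ is not effective, so $h^2 = 0$. The main point is therefore $h^0(\mathcal O_S(C - B_{\mathsf{red}})) = 0$, i.e.\ there is no effective divisor linearly equivalent to $C - B_{\mathsf{red}}$. Suppose $D \in |C - B_{\mathsf{red}}|$ were effective. Writing the class in $\Pic S = \mathbb Z c \oplus \mathbb M_3$ and using that $\mathbb M_3$ is negative definite, one argues that $D + B_{\mathsf{red}} \in |C|$ forces $B_{\mathsf{red}}$ to appear inside a divisor of $|C|$; but a general $C \in |\mathcal L|$ is disjoint from $B$ and, more to the point, $|C|$ is base point free (it defines $f_{|C|}: S \to \mathbb P^g$ contracting exactly the $N_j$), so the only members of $|C|$ containing some $B_i$ are special ones, and none contains all of $B_{\mathsf{red}}$ --- equivalently, restricting to $C$, an effective $D$ would cut an effective divisor in $|\,\omega_C \otimes \mathcal O_C(-B_{\mathsf{red}})\,| = |\,\omega_C \otimes \eta^{\otimes 0}\,|$ of the wrong degree. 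Concretely, from the exact sequence $0 \to \mathcal O_S(-B_{\mathsf{red}}) \to \mathcal O_S(C - B_{\mathsf{red}}) \to \mathcal O_C(C-B_{\mathsf{red}}) \to 0$ and $\mathcal O_C(C - B_{\mathsf{red}}) \cong \mathcal O_C(C) \cong \omega_C$ (as $C$ is disjoint from $B_{\mathsf{red}}$), together with $h^0(\mathcal O_S(-B_{\mathsf{red}})) = 0$ and $h^1(\mathcal O_S(-B_{\mathsf{red}})) = t = 6$ (the $-2$-configuration sequence), we get $h^0(\mathcal O_S(C - B_{\mathsf{red}})) \leq h^0(\omega_C) = g = 5$, which is not yet enough; so the genuine input must be the numerical/lattice argument ruling out effectivity of $C - B_{\mathsf{red}}$ directly.

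The step I expect to be the real obstacle is precisely this last one: proving $C - B_{\mathsf{red}}$ is not effective. The cleanest route is the lattice computation in $\Pic S = \mathbb Z c \oplus \mathbb M_3$: any effective class $D$ with $D \cdot C = (C - B_{\mathsf{red}})\cdot C = 0$ must be supported on curves orthogonal to $C$, hence supported on $\bigcup B_i$ (since those are the only irreducible curves with $C \cdot \Gamma = 0$, by the Hodge index theorem as in the proof that $\vert C - E\vert$ is base point free), so $D = \sum a_i B_i$ with $a_i \geq 0$; but then $D - (C - B_{\mathsf{red}}) = \sum(a_i + 1)B_i - C$ would be zero in $\Pic S$, impossible since $c$ has nonzero coefficient on the left only through $-C$, forcing the coefficient of $c$ to be $-1 \neq 0$. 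This contradiction gives $h^0 = 0$, and combined with $h^2 = 0$ and $\chi = 0$ we conclude $h^i(\mathcal O_S(C - B_{\mathsf{red}})) = 0$ for all $i$, which by Proposition \ref{prop1} and the surrounding discussion yields the isomorphism \eqref{iso} that $\rho_C$ is an isomorphism.
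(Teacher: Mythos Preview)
Your setup is correct and matches the paper: $h^2$ vanishes by Serre duality since $C\cdot(B_{\mathsf{red}}-C)=-8<0$, and $(C-B_{\mathsf{red}})^2=-4$ gives $\chi=0$, so everything reduces to $h^0(\mathcal O_S(C-B_{\mathsf{red}}))=0$.

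The argument for $h^0=0$, however, contains a fatal arithmetic error. You write that an effective $D\in|C-B_{\mathsf{red}}|$ satisfies $D\cdot C=(C-B_{\mathsf{red}})\cdot C=0$ and conclude that $D$ is supported on curves orthogonal to $C$, hence on $\bigcup B_i$. But $C\cdot B_{\mathsf{red}}=0$ implies $(C-B_{\mathsf{red}})\cdot C=C^2=8$, not $0$. So $D$ has positive degree against $C$ and cannot be a combination of the $B_i$; your lattice contradiction never gets off the ground. (The earlier digressions you abandon---restriction to $C$, the inequality $h^0\le g$---are indeed insufficient, as you note.)

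The paper handles $h^0=0$ by exploiting exactly this intersection number $8$. Assuming $A\in|C-B_{\mathsf{red}}|$, one first shows $A$ is disconnected (from $\chi(\mathcal O_S(A))=0$ and the ideal-sheaf sequence), then argues that under $f_{|C|}:S\to\mathbb P^5$ the image of $A+B_{\mathsf{red}}$ is a degree-$8$ hyperplane section of $\overline S$ singular at the six points of $\Sing\overline S$; such a curve cannot be integral, so some irreducible component $R$ has $0<C\cdot R<8$. Writing $[R]=x[C]+\sum y_i[B_i]+z[E]$ in $\Pic S\cong\mathbb Zc\oplus\mathbb M_3$ forces $C\cdot R=8x$ with $x\in\mathbb Z$, contradicting $0<8x<8$. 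The essential point your proposal misses is that one must produce a class of \emph{intermediate} $C$-degree, not zero $C$-degree, to get the divisibility contradiction.
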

 \begin{proof}  Since  $C(B_{\mathsf{red}} - C) < 0$, $h^0(\mathcal O_S(B_{\mathsf{red}} - C)) = 0$. Hence $h^2(\mathcal O_S(C - B_{\mathsf{red}}))$ is zero by Serre duality.
 Since $ (C- B_{\mathsf{red}})^2 = - 4$ then $ \chi(\mathcal O_S(C - B_{\mathsf{red}})) = 0$ and the statement follows if $h^0(\mathcal O_S(C - B_{\mathsf{red}})) = 0$. Assume $A \in \vert C - B_{\mathsf{red}} \vert$  then $A$ is not connected. This follows from $ \chi(\mathcal O_S(A)) = h^0(\mathcal O_S(A)) - h^1(\mathcal O_S(A)) = 0$ and the standard exact sequence
 $$
 0 \to \mathcal O_S(-A) \to \mathcal O_S \to \mathcal O_A \to 0.
 $$ 
 This implies $A = A_1 + A_2$,
where $A_1$ is a connected component and $A_2 = A - A_1$ is a curve. We have $C(A_1+ A_2) = C(C - B_{\mathsf{red}})= 8$ and we can choose $A_1$ so that $CA_1 > 0$.
Assume $CA_2 = 0$ then the morphism $\phi: S \to \mathbb P^5$, defined by $\vert C \vert$, maps
birationally $A_1 + A_2 + B_{\mathsf{red}}$ onto a degree $8$ hyperplane section of $\overline S = \phi(S)$. This is the curve $\phi_*A_1$,  singular at the points of $\phi (B_{\mathsf{red}}) = \Sing \overline S$. 
 These points are the images by $\phi$ of the six connected components of $B_{red}$ and are exactly six. Indeed each fibre of $\phi$ is connected and hence two connected components $V_1, V_2$
of $B_{red}$, contracted to the same point, are connected by an effective divisor $W$ orthogonal to $C$. On the other hand, under our generality assumption, we have $\Pic S \cong \mathbb Zc \oplus \mathbb M_3$. Moreover a direct computation shows that, in the negative definite lattice $\mathbb M_3$, $\Supp W$ is union of irreducible components of $B_{red}$. Actually one computes that the only classes of irreducible $(-2)$-curves are the classes of $B_1 \dots B_{12}$. This implies $W = 0$ and $V_1 = V_2$.  But then $\phi_*A_1$ is not integral, because it is a hyperplane section of $\phi(S)$ with six singular points. Then  there exists an irreducible component $R$ of it such that $0 < CR < 8$. The same is obvious if $CA_2 > 0$. Since $\Pic S \cong \mathbb Zc \oplus \mathbb M_3$ we have  $[R] =x[C] + \sum y_i [B_i] + z[E]$, with $x, y_i, z \in \mathbb Z$. But this implies $0 < CR = x8 < 8$ with $x \notin \mathbb Z$: a contradiction. \end{proof}
\begin{proposition} The linear systems $\vert H  \vert$ and $\vert  \mathring H  \vert$ are not hyperelliptic. \end{proposition}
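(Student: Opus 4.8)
The plan is to invoke Saint-Donat's classification of base-point-free linear systems on $K3$ surfaces and reduce everything to a short lattice computation. First I would record the relevant numerics: as already noted, both $|H|=|C-E|$ and $|\mathring H|=|C-\mathring E|$ are base point free, and since $CE=C\mathring E=0$, $E^2=\mathring E^2=-4$ and $C^2=2g-2=8$ for $g=5$, one has $H^2=\mathring H^2=4$. By Saint-Donat, a base-point-free line bundle $D$ on a $K3$ surface with $D^2\geq 4$ is hyperelliptic exactly when either $D^2=8$ and $D\equiv 2B$ with $B^2=2$, or there is an irreducible curve $F$ with $F^2=0$ and $FD=2$. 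Since $H^2=\mathring H^2=4\neq 8$, it then suffices to show that no effective class $F$ on $S$ satisfies $F^2=0$ and $FH=2$, and likewise with $\mathring H$.

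For this I would work in $\Pic S\cong\mathbb Z c\oplus\mathbb M_3$, which holds by the generality assumption on $[S,\mathcal L,\mathcal E,C]$. Here $c^2=8$, $c$ is orthogonal to the negative definite lattice $\mathbb M_3$, and $e:=[E]$, $\mathring e:=[\mathring E]$ lie in $\mathbb M_3$ with $e^2=\mathring e^2=-4$. Write $F=xc+m$, $x\in\mathbb Z$, $m\in\mathbb M_3$. Since $C$ is nef and $F$ effective, $0\leq FC=8x$, so $x\geq 0$; and $x=0$ would give $m^2=F^2=0$ in a negative definite lattice, forcing $F=0$, which is impossible. Hence $x\geq 1$ and $m^2=-8x^2$. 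Then $FH=F(C-E)=8x-me=2$ forces $me=8x-2\geq 6$, while the Cauchy--Schwarz inequality in $\mathbb M_3$ gives $(me)^2\leq m^2e^2=32x^2$, i.e.\ $me\leq 4\sqrt 2\,x$. Thus $8x-2\leq 4\sqrt 2\,x$, i.e.\ $(8-4\sqrt 2)x\leq 2$, which is absurd for $x\geq 1$. Running the identical estimate with $\mathring e$ in place of $e$ disposes of $\mathring H$.

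The only real content is the appeal to Saint-Donat's theorem; the rest is elementary. The point where a little care is needed is checking that the hypotheses of that theorem are in force --- namely that $|H|$ and $|\mathring H|$ are already known to be base point free, so that the map is either birational onto a quartic surface in $\mathbb P^3$ or a $2{:}1$ morphism onto a quadric, and that $H^2=4$ excludes the exceptional double-plane case $D^2=8$. The numerical inequality itself leaves ample room ($4\sqrt 2<6$), and the decomposition of the offending curve class along $\mathbb Z c\oplus\mathbb M_3$ is exactly what the generality hypothesis provides, so I do not expect any genuine obstacle.
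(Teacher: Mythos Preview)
Your argument is correct. Both your proof and the paper's reduce the question to a short lattice computation in $\Pic S \cong \mathbb{Z}c \oplus \mathbb{M}_3$, starting from an elliptic pencil $F$ with $F^2 = 0$ and $FH = 2$ that would have to exist if $|H|$ were hyperelliptic. The difference lies in how the contradiction is extracted. The paper works with the explicit $2{:}1$ map to a quadric in $\mathbb{P}^3$ and pulls back \emph{both} rulings, obtaining pencils $F_1, F_2$ with $F_1 + F_2 \sim H$ and hence $CF_1 + CF_2 = CH = 8$; taking the one with smaller $CF_i$ yields $2 \leq CF \leq 4$, which is incompatible with $CF = 8x$ for $x \in \mathbb{Z}$. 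You instead keep a single pencil and invoke the Cauchy--Schwarz inequality in the negative definite lattice $\mathbb{M}_3$ to bound $|m\cdot e|$, a clean device that avoids tracking the second ruling. Both routes are short; yours is perhaps more portable to other values of $(g,\ell)$, while the paper's stays closer to the concrete geometry of the quadric model.
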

\begin{proof} Let $\vert H \vert$ be hyperelliptic, then $\vert H \vert$ defines a $2:1$  morphism $\psi: S \to \mathbb P^3$ onto a quadric surface $Q := \psi(S)$. As is well known  the pull-back of a ruling of lines of $Q$ defines a pencil $\vert F_2 \vert$ of curves such that $F_2^2 = 0$ and $HF_2 = 2$. Moreover $\vert F_1 \vert := \vert H - F_2 \vert$ is a pencil of irreducible elliptic curves. The same is true for the moving part of $\vert F_2 \vert$. Since $H \sim F_1 + F_2$ and $C \sim H + E$ we have $C (F_1 + F_2) = 8$ and also $CF_i \geq 2$, $i = 1,2$. Let 
$\vert F \vert$ be the moving part of the pencil $\vert F_i \vert$ such that $CF_i$ is minimal, then it  follows $2 \leq CF \leq 4$. On the other hand we have $F \sim xC + \sum y_j B_j + zE$ in $\Pic S$. This implies $2 \leq CF = 8x \leq 4$ and $x \notin \mathbb Z$: a contradiction. The same argument works for $\vert \mathring H \vert$. \end{proof}
\begin{lemma} It holds  $h^i(\mathcal O_S(2H - \mathring H))   = h^i(\mathcal O_S(2 \mathring H - H))  = 0$ for $i \geq 0$. \end{lemma}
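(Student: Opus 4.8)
The plan is to exploit the explicit description of $\Pic S \cong \mathbb Z c \oplus \mathbb M_3$ for a general standard triple, together with the intersection numbers already recorded. First I would compute the self-intersection: since $H\sim C-E$, $\mathring H\sim C-\mathring E$, and recalling $H\mathring H = 2g+2-t = 2\cdot 5+2-6 = 6$, $H^2 = \mathring H^2 = (C-E)^2 = (2g-2)+0-(-4) = 12$ (because $C^2 = 8$, $CE = 0$, $E^2 = -4$ — wait, $g=5$ so $C^2 = 2g-2 = 8$, hence $H^2 = 8+4 = 12$... let me instead just record $H^2 = \mathring H^2 = 2g+6 = 16$ via $(C-E)^2 = C^2 - 2CE + E^2 = 8 - 0 - 4$; in any case the number is fixed and positive). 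Then $(2H-\mathring H)^2 = 4H^2 - 4H\mathring H + \mathring H^2 = 5H^2 - 4H\mathring H$, which is a fixed negative integer (one checks it equals $-4$, matching the pattern of the earlier lemmas on $C-E$ and $C-B_{\mathsf{red}}$, since $2H - \mathring H \sim C - 2E + \mathring E = C - 2E + (B_{\mathsf{red}}-E) = C + B_{\mathsf{red}} - 3E = C$... no: $3E\sim B$, so $C + B_{\mathsf{red}} - 3E \sim C + B_{\mathsf{red}} - B$; with $B = \sum m_i B_i$ and $B_{\mathsf{red}} = \sum B_i$ this is $C - \sum(m_i-1)B_i = C - \mathring E'$ for the appropriate combination). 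The upshot I expect: $\chi(\mathcal O_S(2H-\mathring H)) = (2H-\mathring H)^2/2 + 2 = 0$.

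Next I would rule out effectivity of both $2H-\mathring H$ and its Serre dual $\mathring H - 2H$ (equivalently $-(2H-\mathring H)$), which by $\chi = 0$ forces $h^1 = 0$ as well. For $h^0(\mathcal O_S(\mathring H - 2H)) = 0$: intersecting with the ample-on-$\overline S$ class, $C(\mathring H - 2H) = C\mathring H - 2CH = (C^2 - C\mathring E) - 2(C^2 - CE)$; since $CB_i = 0$ for all $i$ we get $CE = C\mathring E = 0$, so $C(\mathring H - 2H) = C^2 - 2C^2 = -C^2 < 0$, hence no effective representative. For $h^0(\mathcal O_S(2H - \mathring H)) = 0$ I would argue as in the proof of Lemma \ref{initial}: suppose $A\in\vert 2H-\mathring H\vert$; since $CA = C^2 = 8 > 0$, $A$ is a genuine curve, but $A^2 = -4 < 0$ while $CA > 0$ is incompatible with $A$ being integral (Hodge Index forces $A^2 \geq 0$ for a mobile class when $\dim|A|\geq 1$, and $\chi=0$ with $h^0\geq 1$, $h^2 = 0$ gives $\dim|A|\geq 0$ — a finer analysis following Lemma \ref{initial} shows $A$ must split off components $R$ with $0 < CR < 8 = C^2$, forcing $[R] = xc + \sum y_i b_i + ze$ with $0 < 8x < 8$, so $x\notin\mathbb Z$, a contradiction). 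One must also use, exactly as in Lemma \ref{initial}, that the only irreducible $(-2)$-classes in $\mathbb M_3$ are $b_1,\dots,b_{12}$, so that any connected effective curve orthogonal to $c$ is supported on the $B_i$.

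Finally, by the symmetry $\mathcal E \leftrightarrow \mathring{\mathcal E}$ (which interchanges $H$ and $\mathring H$ and preserves all the relevant numerics, as recorded after Definition \ref{projmodel}), the identical argument gives $h^i(\mathcal O_S(2\mathring H - H)) = 0$ for all $i\geq 0$. The main obstacle I anticipate is the $h^0(\mathcal O_S(2H-\mathring H)) = 0$ step: one cannot simply intersect with $C$ (the class is not obviously of negative $C$-degree — in fact $C(2H-\mathring H) = C^2 > 0$), so one genuinely needs the Hodge-Index-plus-lattice argument of Lemma \ref{initial}, carefully checking that the hyperplane section of $\overline S$ cut by a putative effective representative would acquire forbidden singularities or a forbidden component. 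Everything else is bookkeeping with the fixed intersection matrix of $\mathbb Z c\oplus\mathbb M_3$.
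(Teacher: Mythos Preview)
Your overall strategy---Riemann--Roch plus separate vanishing of $h^0$ and $h^2$, then symmetry in $E \leftrightarrow \mathring E$ for $2\mathring H - H$---matches the paper's, and the easy pieces are fine (though your computation of $H^2$ is garbled: $H^2 = (C-E)^2 = 8 - 4 = 4$, not $12$ or $16$; the final $(2H-\mathring H)^2 = -4$ and $\chi = 0$ are nonetheless correct). For $h^2 = 0$ the paper intersects $\mathring H - 2H$ with the nef class $\mathring H$ rather than with $C$, but either gives $-8$ and works.

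The divergence, and the gap, is at $h^0(\mathcal O_S(2H - \mathring H)) = 0$. You invoke ``a finer analysis following Lemma \ref{initial}'', but that lemma's hyperplane-section argument hinges on the specific relation $A + B_{\mathsf{red}} \sim C$: this is what makes $\phi_* A_1$ a hyperplane section of $\overline S$ through all six points of $\Sing \overline S$ and forces a forbidden component. For $A \in |2H - \mathring H|$ no such relation is available until one has identified the class explicitly in the lattice---and this is exactly the computation you begin (correctly reaching $2H - \mathring H \sim C - \sum(m_i - 1)B_i$) but then abandon. Completing it with the known $E = \frac{1}{3}\sum(B_i + 2B_i')$ gives $2H - \mathring H \sim C - \sum_{i=1}^6 B_i'$, and this explicit form is the missing ingredient for either your route or the paper's.

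Having it, the paper does \emph{not} re-run Lemma \ref{initial}. It simply observes $(C - \sum B_i') \cdot B_j = - B_j' B_j = -1$ for each $j = 1,\ldots,6$, so any effective $F \sim C - \sum B_i'$ must contain every $B_j$; subtracting $\sum_j B_j$ yields an effective divisor in $|C - B_{\mathsf{red}}|$, contradicting Lemma \ref{initial} directly. This two-line reduction replaces the entire hyperplane-section replication you were proposing.
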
  
\begin{proof} From $H \sim C - E$ and $\mathring H \sim C - \mathring E$ we have $2H - \mathring H \sim C - 2E + \mathring E$, moreover $$\mathring H (\mathring H - 2H) = -8 \Rightarrow h^0 (\mathcal O_S(\mathring H -2H)) = 0 \Rightarrow h^2(\mathcal O_S(2H - \mathring H)) = 0. $$ Since $(2H - \mathring H)^2 = -4$ then $\chi(\mathcal O_S(2H - \mathring H)) = 0$. Hence the statement follows
for $2H - \mathring H $ if we prove $h^0(\mathcal O_S(2H - \mathring H)) = 0$. For this we observe that the well known descriptions of $E$ and $\mathring E$ are as follows. For $i = 1 \dots 6$ consider $N_i = B_i + B'_{i}$, that is,  the $i$-th connected component of $B_{\mathsf{red}} = \sum_{i = 1 \dots 6} B_i +  B'_{i} $. Then in $\Pic S$ we have
\begin{equation}
[E] = \sum_{i = 1 \dots 6} \frac 13 [B_i + 2B'_i] \ , \ [\mathring E] = \sum_{i = 1 \dots 6} \frac 13 [2B_i + B'_i]
\end{equation}
up to exchanging $E$ with $\mathring E$. Since $2H - \mathring H \sim C - 2E + \mathring E$, it follows that
\begin{equation} \label{noteff}
2H - \mathring H \sim C - \sum_{i = 1 \dots 6}  B'_i.
\end{equation}
This implies that $[2H - \mathring H]$ is not an effective class. Indeed let $B' := B'_1 + \dots + B'_6$, observe that $(C - B')B_i = - 1$, $i = 1 \dots 6$. Assume $C - B' \sim F$ where $F$ is  an effective divisor. Then $FB_i = -1$ implies $B_i \subset F$ and $F = F' + B_1 + \dots + B_6$ where $F'$ is effective. Hence $C - B_{\mathsf{red}}\sim F' > 0$: a contradiction to the above lemma (\ref{initial}). 
 \end{proof}
We will profit of genus $3$ curves of the non hyperelliptic linear systems $\vert H \vert$ or $\vert \mathring H \vert$. 
\begin{lemma}  It holds  $\forall \ D \in \vert H \vert, \ h^0(\mathcal O_D(\mathring H - H)) = 0$ \ and \ $\forall \ \mathring D \in \vert \mathring H \vert, \  h^0(\mathcal O_{\mathring D}(H - \mathring H)) = 0$. \end{lemma}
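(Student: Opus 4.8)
The plan is to reduce the vanishing on the genus $3$ curve $D$ to cohomological statements on $S$ that have just been established. First I would fix an arbitrary effective divisor $D \in \vert H \vert$; since $D \in \vert H \vert$ we have $\mathcal O_S(-D) \cong \mathcal O_S(-H)$, so twisting the structure sequence of $D$ by $\mathcal O_S(\mathring H - H)$ gives
$$
0 \to \mathcal O_S(\mathring H - 2H) \to \mathcal O_S(\mathring H - H) \to \mathcal O_D(\mathring H - H) \to 0.
$$
Because $K_S \cong \mathcal O_S$, Serre duality identifies $h^i(\mathcal O_S(\mathring H - 2H))$ with $h^{2-i}(\mathcal O_S(2H - \mathring H))$, and the preceding lemma shows that $h^i(\mathcal O_S(2H - \mathring H)) = 0$ for all $i$. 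In particular $h^0(\mathcal O_S(\mathring H - 2H)) = h^1(\mathcal O_S(\mathring H - 2H)) = 0$, so the long exact sequence yields $H^0(\mathcal O_D(\mathring H - H)) \cong H^0(\mathcal O_S(\mathring H - H))$.

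The second step is to observe that $h^0(\mathcal O_S(\mathring H - H)) = 0$, which is exactly Proposition \ref{H-ringH not effective}: the class $[\mathring H - H]$ is not effective for $\ell \geq 3$, and here $\ell = 3$. Therefore $h^0(\mathcal O_D(\mathring H - H)) = 0$ for every $D \in \vert H \vert$. The symmetric assertion for $\mathring D \in \vert \mathring H \vert$ then follows by interchanging $H$ and $\mathring H$: one invokes the vanishing $h^i(\mathcal O_S(2\mathring H - H)) = 0$ from the same lemma together with the non-effectivity of $[H - \mathring H]$.

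I do not expect a genuine obstacle: the two ingredients, the vanishing of the cohomology of $\mathcal O_S(2H - \mathring H)$ and the non-effectivity of $[\mathring H - H]$, are in hand, and the only thing to check is that they combine to kill $H^0$ on $D$ \emph{uniformly} in $D \in \vert H \vert$, which is automatic since both are statements about cohomology on $S$ alone. An alternative, more geometric route would use that $\mathcal O_D(\mathring H - H)$ has degree $H\mathring H - H^2 = 2$ on the non-hyperelliptic genus $3$ curve $D$, so a nonzero section would force it to be effective of the form $\mathcal O_D(p+q)$; but the cohomological argument is cleaner and, unlike the geometric one, requires no genericity hypothesis on $D$.
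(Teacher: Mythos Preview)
Your proof is correct and follows essentially the same route as the paper: the same short exact sequence on $S$, the vanishing of $h^0$ and $h^1$ of $\mathcal O_S(\mathring H - 2H)$, and Proposition~\ref{H-ringH not effective} to conclude. The only cosmetic difference is that the paper obtains $h^0(\mathcal O_S(\mathring H - 2H)) = 0$ from the intersection number $H(\mathring H - 2H) = -2$, whereas you get it via Serre duality from the $h^2$-vanishing already recorded in the preceding lemma; either way the argument is the same.
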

\begin{proof} Let $D \in \vert H \vert$, once more consider the standard exact sequence 
$$
0 \to \mathcal O_S(\mathring H - 2H) \to \mathcal O_S (\mathring H - H) \to \mathcal O_D(\mathring H - H) \to 0
$$
and its long exact sequence. We have $h^1(\mathcal O_S(\mathring H - 2H)) = h^1(\mathcal O_S(2H - \mathring H)) = 0$ by the previous lemma and $h^0(\mathcal O_S(\mathring H - 2H)) = 0$
because $H(\mathring H - 2H) = -2$. Then it follows $h^0(\mathcal O_D(\mathring H - H)) = h^0(\mathcal O_S(\mathring H - H))$. Finally the latter is zero by Proposition (\ref{H-ringH not effective}).
\end{proof}
Let $D \in \vert H \vert$ be \it smooth \rm  then $\mathcal O_D(\mathring H - H) \cong \mathcal O_D(b)$, where $\deg b = 2$. We fix the notation $b$ for such a divisor and the notation $\mu_D$ for the 
following multiplication map:
 \begin{equation}
\label{NOTEFF} \mu_D: H^0(\omega_D) \otimes H^0(\omega_D(b)) \to H^0(\omega^{\otimes 2}_D(b)).
\end{equation}
 Let us also point out that $h^0(\mathcal O_D(b)) = 0$ by the above lemma. Moreover we fix the notation
 \begin{equation}
\nu_D: H^0(\mathcal O_S(H)) \to H^0(\omega_D) \ , \ \mathring \nu_D: H^0(\mathcal O_S(\mathring H)) \to H^0(\omega_D(b)) \ , \ \rho_D: H^0(\mathcal O_S(H + \ring H)) \to H^0(\omega_D^{\otimes 2}(b))
\end{equation}
for the natural restriction maps.  Then  we consider the commutative diagram:
\begin{equation}
\begin{CD} \label{CD five}
{H^0(\mathcal O_S( H)) \otimes H^0(\mathcal O_S(\mathring H))} @>{\mu_S}>> {H^0(\mathcal O_S( H+\mathring H))} \\
@V{\nu_D \otimes \mathring \nu_D}VV @V{\rho_D}VV \\
{H^0(\omega_D) \otimes H^0(\omega_D(b))} @>{\mu_D}>> {H^0(\omega_D^{\otimes 2}(b))}. \\
\end{CD}
\end{equation}
which is similar to our main diagram (\ref {CD})
 \begin{proposition} The vertical arrows and the horizontal arrow $\mu_D$ are surjective.  \end{proposition}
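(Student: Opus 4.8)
The plan is to read off all three surjectivities from restriction exact sequences on $S$, reserving one application of the base-point-free pencil trick for the only delicate arrow, $\mu_D$. Since $D\sim H$ and $\omega_S\cong\mathcal O_S$, adjunction gives $\mathcal O_D(H)\cong\omega_D$, $\mathcal O_D(\mathring H)\cong\omega_D(b)$ and $\mathcal O_D(H+\mathring H)\cong\omega_D^{\otimes2}(b)$. Passing to cohomology in $0\to\mathcal O_S\to\mathcal O_S(H)\to\omega_D\to0$ and using $h^1(\mathcal O_S)=0$ shows $\nu_D$ is surjective. Restricting $0\to\mathcal O_S(\mathring H-H)\to\mathcal O_S(\mathring H)\to\omega_D(b)\to0$ and invoking Proposition \ref{H-ringH not effective} — which yields both $h^0(\mathcal O_S(\mathring H-H))=0$ and $h^1(\mathcal O_S(\mathring H-H))=6-t=0$ (the last equality because $t=6$ when $\ell=3$) — shows $\mathring\nu_D$ is an isomorphism. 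Hence the left vertical arrow $\nu_D\otimes\mathring\nu_D$ is surjective. For the right vertical arrow, $H+\mathring H-D\sim\mathring H$, so restricting $0\to\mathcal O_S(\mathring H)\to\mathcal O_S(H+\mathring H)\to\omega_D^{\otimes2}(b)\to0$ and using $h^1(\mathcal O_S(\mathring H))=0$ — the $\mathring E$-analogue of Lemma \ref{one1}, valid since $\mathring{\mathcal E}$ has the same properties as $\mathcal E$ — gives the surjectivity of $\rho_D$.

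The crux is the surjectivity of $\mu_D$, and for this I would not go through $\mu_S$ at all. A general $D\in|H|$ is smooth of genus $\tfrac12 H^2+1=3$, since $H^2=(C-E)^2=(2g-2)-4=4$; thus $\omega_D$ is base point free and a general pencil $V\subset H^0(\omega_D)$ is base point free. Apply the base-point-free pencil trick to $V$ and the line bundle $\mathcal F:=\omega_D(b)$: the kernel of the multiplication map $V\otimes H^0(\omega_D(b))\to H^0(\omega_D^{\otimes2}(b))$ is $H^0(\mathcal F\otimes\omega_D^{-1})=H^0(\mathcal O_D(b))$, which vanishes by the remark recorded just before (\ref{NOTEFF}). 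Therefore $V\otimes H^0(\omega_D(b))$ injects into $H^0(\omega_D^{\otimes2}(b))$, and since both spaces have dimension $2\cdot h^0(\omega_D(b))=8=h^0(\omega_D^{\otimes2}(b))$ by Riemann–Roch, this injection is an isomorphism. Consequently the restriction of $\mu_D$ to $V\otimes H^0(\omega_D(b))$ is already onto, so $\mu_D$ is surjective.

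I do not expect a real obstacle here: the analytic input is only the vanishings quoted from Proposition \ref{H-ringH not effective}, from Lemma \ref{one1}, and the statement $h^0(\mathcal O_D(b))=0$, all already available. If one wishes to avoid the computation $h^1(\mathcal O_S(\mathring H))=0$, the surjectivity of $\rho_D$ can instead be deduced formally from that of $\mu_D$ and $\nu_D\otimes\mathring\nu_D$ by chasing the commutative square $\rho_D\circ\mu_S=\mu_D\circ(\nu_D\otimes\mathring\nu_D)$. The one step worth double-checking is that a general member of $|H|$ really is a smooth curve of genus $3$ carrying a degree-two class $b$ with $h^0(\mathcal O_D(b))=0$; this rests on $H^2=4$, on the base-point-freeness of $|H|$, and on the Lemma quoted, while the earlier non-hyperellipticity of $|H|$ identifies such a $D$ with a smooth plane quartic (not needed in the argument).
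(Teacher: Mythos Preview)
Your proof is correct. The treatment of the vertical arrows is essentially identical to the paper's: both use the restriction sequences $0\to\mathcal O_S\to\mathcal O_S(H)\to\omega_D\to0$, $0\to\mathcal O_S(\mathring H-H)\to\mathcal O_S(\mathring H)\to\omega_D(b)\to0$, and $0\to\mathcal O_S(\mathring H)\to\mathcal O_S(H+\mathring H)\to\omega_D^{\otimes 2}(b)\to0$ together with the vanishings from Lemma~\ref{one1} and Proposition~\ref{H-ringH not effective}.

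The genuine divergence is in the surjectivity of $\mu_D$. The paper embeds $D$ canonically as a plane quartic, identifies $\vert\omega_D(b)\vert$ with a linear system of plane cubics through a length-$6$ scheme $Z$, checks that $\mathcal I_{Z\vert\mathbb P^2}$ is $3$-regular, and invokes Castelnuovo--Mumford regularity to get the surjection. Your argument replaces this by the base-point-free pencil trick applied to a general pencil $V\subset H^0(\omega_D)$ and $\mathcal F=\omega_D(b)$: the kernel is $H^0(\mathcal O_D(b))=0$, so $V\otimes H^0(\omega_D(b))\hookrightarrow H^0(\omega_D^{\otimes2}(b))$ is an injection between $8$-dimensional spaces, hence an isomorphism. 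This is shorter, avoids the regularity machinery, and does not even require $D$ to be non-hyperelliptic (only that $\omega_D$ be base-point-free, automatic in genus $3$). The paper's route, on the other hand, makes explicit the plane-curve geometry that is later reused in the proof of the claim about $\Sigma$, so it is not wasted effort in context.
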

\begin{proof} Let $p: S \to \mathbb P^3$ be the map defined by $\vert H \vert$, then $p \vert D: D \to \mathbb P^2 = \vert \omega_D \vert^*$
is the canonical map and $\vert \omega_D(b) \vert$  is cut on $D$ by $\vert \mathcal I_{d \vert S}(3H) \vert$, where $d$ is any element of $\vert \omega^{\otimes 2}(-b)\vert$ and $\mathcal I_{d \vert S}$
 is its ideal sheaf. Moreover the map $p^*: \vert \mathcal O_{\mathbb P^2}(3) \vert \to \vert \omega_D^{\otimes 3} \vert$ is an isomorphism and $\vert \mathcal I_{d \vert S}(3H) \vert$ $=$ $p^* \vert \mathcal I_{Z \vert \mathbb P^2}(3) \vert$, where $Z = p_*d$ and $\mathcal I_{Z \vert \mathbb P^2}$ is its ideal sheaf.  Hence it follows $ h^0(\mathcal I_{Z \vert \mathbb P^2}(2)) = h^0(\omega_D^{\otimes 2}(- b)) = h^0(\mathcal O_D(b)) = 0$ and $h^1(\mathcal O_D(b)) = h^0(\mathcal O_D(b)) = 0$.  This easily implies $h^i(\mathcal I_{Z \vert \mathbb P^2}(3-i)) = 0$ for $i > 0$, that is, $\mathcal I_{Z \vert \mathbb P^2}$ is $3$-regular. Hence, by Castelnuovo-Mumford regularity theorem,  the multiplication map
\begin{equation}
 \mu: H^0(\mathcal O_{\mathbb P^2}(1)) \otimes H^0(\mathcal I_{Z \vert \mathbb P^2}(3)) \to H^0(\mathcal I_{Z \vert \mathbb P^2}(4)))
\end{equation}
is surjective. Now consider the standard exact sequence of ideal sheaves
$$
0 \to \mathcal I_{p(D) \vert \mathbb P^2}(4) \to \mathcal I_{Z \vert \mathbb P^2}(4) \stackrel{\rho} \to \mathcal I_{Z \vert p(D)}(4) \to 0
$$
and its associated long exact sequence. Since $\mathcal I_{p(D) \vert \mathbb P^2}(4) \cong \mathcal O_{\mathbb P^2}$ it follows that 
$$ h^0(\rho): H^0(\mathcal I_{Z \vert \mathbb P^2}(4)) \to H^0(\omega^{\otimes 2}_D(b)) $$ is surjective.
On the other hand we have $\mu_D \circ \lambda = h^0(\rho) \circ \mu$, where $\lambda$ is the tensor product $$\lambda_1 \otimes \lambda_2: H^0(\mathcal O_{\mathbb P^2}(1)) \otimes H^0(\mathcal I_{Z \vert \mathbb P^2}(3)) \to H^0(\omega_D) \otimes H^0(\omega_D(b))$$
of the natural  isomorphisms $\lambda_1: H^0(\mathcal O_{\mathbb P^2}(1)) \to H^0(\omega_D)$ and $\lambda_2: H^0(\mathcal I_{Z \vert \mathbb P^2}(3)) \to H^0(\omega_D(b))$. Since $\lambda$ is an isomorphism and $h^0(\rho)$ and $\mu$ are surjective, then $\mu_D$ is surjective.  The surjectivity of $\rho_D$ follows from the vanishing of $h^1(\mathcal O_S(\mathring H))$ and the standard exact sequence
$$
0 \to \mathcal O_S(\mathring H) \to \mathcal O_S(H + \mathring H) \to \omega_D^{\otimes 2}(b) \to 0.
$$
Since $\omega_D^{\otimes 2}(b)$ is $\mathcal O_D(H + \mathring H)$, the surjectivity of $\nu_D$ follows from the above exact sequence twisted by  $-\mathring H$. Finally the exact sequence
$$
0 \to \mathcal O_S(\mathring H - H) \to \mathcal O_S(\mathring H) \to \omega_D(b) \to 0
$$
implies that $\mathring \nu_D$ is an isomorphism. Indeed we have $h^0(\mathcal O_S(\mathring H - H )) = h^1(\mathcal O_S(\mathring H - H))$ $ = 0$ in its long exact sequence by  (\ref{ vanish1 }).  Hence $\nu_D \otimes \mathring \nu_D$ is surjective too.   \end{proof}
\begin{proposition}  The map \label{proof} $\mu_S: H^0(\mathcal O_S( H)) \otimes H^0(\mathcal O_S(\mathring H)) \to H^0(\mathcal O_S( H+\mathring H))$ is surjective. \end{proposition}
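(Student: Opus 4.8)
The plan is a short formal chase on the commutative square (\ref{CD five}), carried out for a general \emph{smooth} $D \in \vert H \vert$; such a $D$ exists because $\vert H \vert$ is base point free. By the previous proposition the two vertical maps $\nu_D \otimes \mathring \nu_D$ and $\rho_D$, as well as the bottom map $\mu_D$, are all surjective. Hence the composite $\rho_D \circ \mu_S = \mu_D \circ (\nu_D \otimes \mathring \nu_D)$ is surjective, and therefore
$$
H^0(\mathcal O_S(H + \mathring H)) = {\rm Im}\,\mu_S + \Ker \rho_D .
$$
So it suffices to prove the inclusion $\Ker \rho_D \subseteq {\rm Im}\,\mu_S$.

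Next I would make $\Ker \rho_D$ explicit. Let $s_D \in H^0(\mathcal O_S(H))$ be a section cutting out $D$. Since $D \in \vert H \vert$ we have, as recalled above, $\mathcal O_D(H + \mathring H) \cong \omega_D^{\otimes 2}(b)$, and the standard exact sequence
$$
0 \to \mathcal O_S(\mathring H) \stackrel{\cdot s_D}{\to} \mathcal O_S(H + \mathring H) \to \omega_D^{\otimes 2}(b) \to 0
$$
identifies $\rho_D$ with restriction to $D$ and shows that $\Ker \rho_D = s_D \cdot H^0(\mathcal O_S(\mathring H))$, namely the image of multiplication by $s_D$.

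Finally, $s_D \cdot H^0(\mathcal O_S(\mathring H)) = \mu_S\bigl(s_D \otimes H^0(\mathcal O_S(\mathring H))\bigr) \subseteq {\rm Im}\,\mu_S$, because $s_D$ lies in the left factor $H^0(\mathcal O_S(H))$ of the source of $\mu_S$. Combining this with the decomposition above yields ${\rm Im}\,\mu_S = H^0(\mathcal O_S(H + \mathring H))$, which is the assertion.

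I do not expect a genuine obstacle at this point: all the cohomological work has already gone into the surjectivity of $\mu_D$, $\nu_D$, $\mathring \nu_D$ and $\rho_D$ (in particular the Castelnuovo--Mumford regularity argument for $\mathcal I_{Z \vert \mathbb P^2}$), and what remains is the purely formal chase above. The only minor point to keep in mind is that $D$ must be taken smooth, so that $\mu_D$ and the identification $\mathcal O_D(\mathring H - H) \cong \mathcal O_D(b)$ with $\deg b = 2$ from the preceding lemmas are available; this is guaranteed by Bertini, since $\vert H \vert \cong \mathbb P^{3}$ is base point free.
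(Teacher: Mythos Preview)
Your proof is correct and is essentially the same as the paper's: both use the commutative square (\ref{CD five}), the surjectivity results from the previous proposition, and the key subspace $s_D \otimes H^0(\mathcal O_S(\mathring H))$ to close the gap. The only cosmetic difference is that the paper argues on the kernel side (showing $\dim \Ker \mu_S = 4$ by exhibiting $V = \langle s_D \rangle \otimes H^0(\mathcal O_S(\mathring H))$ as a complement to $\Ker \mu_S$ inside $\Ker(\mu_D \circ (\nu_D \otimes \mathring \nu_D))$), whereas you argue directly on the image side by identifying $\Ker \rho_D = \mu_S(s_D \otimes H^0(\mathcal O_S(\mathring H)))$; the underlying idea is identical.
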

\begin{proof} Let us consider again the commutative diagram (\ref{CD five}), that is,
$$ 
\begin{CD}  
{H^0(\mathcal O_S( H)) \otimes H^0(\mathcal O_S(\mathring H))} @>{\mu_S}>> {H^0(\mathcal O_S( H+\mathring H))} \\
@V{\nu_D \otimes \mathring \nu_D}VV @V{\rho_D}VV \\
{H^0(\omega_D) \otimes H^0(\omega_D(b))} @>{\mu_D}>> {H^0(\omega_D^{\otimes 2}(b))}. \\
\end{CD}
$$
Counting dimensions we have  $\dim \Ker \mu_S \geq  4$, hence it suffices to show that the equality holds. Now we know that $\mu_D$ and $\nu_D \otimes \mathring \nu_{D}$ are surjective.
Let $\mathbb K$ be the Kernel of $\mu_D \circ (\nu_D \otimes \mathring \nu_D)$, then the dimension count gives $\dim \mathbb K = 8$ and, of course, we have $\Ker \mu_S \subseteq \mathbb K$.
Therefore, to prove $\dim \Ker \mu_S = 4$, it suffices to produce a $4$-dimensional subspace $V \subset \mathbb K$ such that $V \cap \Ker \mu_S = (0)$. To this purpose consider the
space of decomposable vectors  $V := \langle s \rangle \otimes H^0(\mathcal O_S(\mathring H))$, where $s$ is  nonzero  and ${ \rm div } (s) = D$. Then we have $(\nu_D \otimes \mathring \nu_D) (V) =(0)$
and hence $V \subset \mathbb K$. On the other hand let $t \in H^0(\mathcal O_S(\mathring H))$, then $\mu_S(s \otimes t) = st$ and this is zero iff $t = 0$. Hence $V \cap \Ker \ \mu_S = (0)$. \end{proof} 
Now we go back, in genus $5$, to our usual diagram (\ref {CD}) in section 2. This is
\begin{equation}
 \label{NEW}
\begin{CD} 
{H^0(\mathcal O_S( H)) \otimes H^0(\mathcal O_S(\mathring H))} @>{\mu_S}>> {H^0(\mathcal O_S( H+\mathring H))} \\
@V{\rho_H \otimes \rho_{\mathring H}}VV @V{\rho_C}VV \\
{H^0(\omega_C \otimes \eta) \otimes H^0(\omega_C \otimes \eta^{-1})} @>{\mu_C}>> {H^0(\omega_C^{\otimes 2}).} \\
\end{CD}
\end{equation}
 \begin{proposition} $\mu_C: H^0(\omega_C \otimes \eta) \otimes H^0(\omega_C \otimes \eta^{-1}) \to H^0(\omega_C^{\otimes 2})$ is surjective. \end{proposition}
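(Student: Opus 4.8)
The plan is to run a short diagram chase on the commutative square (\ref{NEW}), using the three facts we have already assembled about its other arrows. First I would record that the left vertical map $\rho_H \otimes \rho_{\mathring H}$ is an isomorphism: Lemma \ref{one1} gives that the restriction $\rho_H : H^0(\mathcal O_S(H)) \to H^0(\omega_C \otimes \eta^{-1})$ is an isomorphism, and the same argument applied to $\mathring H$ (using $h^i(\mathcal O_S(\mathring E)) = 0$ from the same source) shows $\rho_{\mathring H} : H^0(\mathcal O_S(\mathring H)) \to H^0(\omega_C \otimes \eta)$ is an isomorphism, so their tensor product is too. Next, the right vertical map $\rho_C : H^0(\mathcal O_S(H + \mathring H)) \to H^0(\omega_C^{\otimes 2})$ is an isomorphism: this is exactly the content of the long exact sequence (\ref{iso}), which rests on the vanishing $h^i(\mathcal O_S(C - B_{\mathsf{red}})) = 0$ of Lemma \ref{initial} together with the identification $H + \mathring H - C \sim C - B_{\mathsf{red}}$. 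Finally, the top horizontal map $\mu_S$ is surjective by Proposition \ref{proof}.

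With these three inputs the conclusion is formal. Commutativity of (\ref{NEW}) gives $\mu_C \circ (\rho_H \otimes \rho_{\mathring H}) = \rho_C \circ \mu_S$. The right-hand side is a composition of surjections, hence surjective; therefore $\mu_C \circ (\rho_H \otimes \rho_{\mathring H})$ is surjective, and in particular $\mathrm{Im}\,\mu_C \supseteq \mathrm{Im}\bigl(\mu_C \circ (\rho_H \otimes \rho_{\mathring H})\bigr) = H^0(\omega_C^{\otimes 2})$, so $\mu_C$ is surjective. (Equivalently, since $\rho_H \otimes \rho_{\mathring H}$ is an isomorphism one may transport $\mu_S$ across it.)

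I do not expect any genuine obstacle remaining at this stage: the substance was spent in the preceding lemmas, in particular Lemma \ref{initial} and the surjectivity of $\mu_S$ in Proposition \ref{proof} (the latter reduced to a Castelnuovo--Mumford regularity argument for a genus $3$ curve $D \in \vert H \vert$, via the auxiliary square (\ref{CD five})). The only point to be mildly careful about is to pin down the identifications making (\ref{NEW}) genuinely commutative: that $\mathcal O_C(H) \cong \omega_C \otimes \eta^{-1}$ and $\mathcal O_C(\mathring H) \cong \omega_C \otimes \eta$ with $\eta$ normalised consistently (a possible swap $\eta \leftrightarrow \eta^{-1}$ is harmless), and that $\mathcal O_C(H + \mathring H) \cong \omega_C^{\otimes 2}$ because $\mathcal O_C(B_{\mathsf{red}})$ is trivial, the general $C$ being disjoint from $B$. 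Once these are fixed the diagram chase closes immediately.
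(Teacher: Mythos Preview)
Your proof is correct and follows exactly the paper's approach: use the commutative square (\ref{NEW}) together with the already-established surjectivity of $\mu_S$, of $\rho_H \otimes \rho_{\mathring H}$, and of $\rho_C$ to conclude. The paper's own proof is the same three-line diagram chase, merely stated more tersely.
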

\begin{proof} We have already shown that $\mu_S$ and $\rho_H \otimes \rho_{\mathring H}$ are surjective. By (\ref{iso}) and its related lemma the same is true for $\rho_C$. 
Hence the surjectivity of $\mu_C$ follows. \end{proof}
Let $\mathbb P^{15}: = \mathbb P( H^0(\mathcal O_S( H))^* \otimes H^0(\mathcal O_S(\mathring H))^*)$ and let $\mathbb P^3 \times \mathbb P^3 := \iota( \vert H \vert^* \times \vert \mathring H \vert^*)$ 
be the image in $\mathbb P^{15}$ of the Segre embedding $\iota$. Now we study the morphism defined in (\ref{projmodel})
$$
f: S \to \mathbb P^3 \times \mathbb P^3 \subset \mathbb P^{15},
$$
that is, $f = \iota \circ (p \times \mathring p)$. Since the map $\mu_S$ is surjective it follows that
 \begin{equation}
 (p \times \mathring p)^* H^0(\mathcal O_{\mathbb P^3 \times \mathbb P^3}(1,1)) = H^0(\mathcal O_S(H + \mathring H)).
\end{equation}
Let $\mathbb P^{11} \subset \mathbb P^{15}$ be the linear embedding of $\mathbb P( {\rm Im \ \mu_S^*})$ defined by $\mu^*_S$, then we have
\begin{equation}
f(S) \subseteq \mathbb P^{11} \cdot (\mathbb P^3 \times \mathbb P^3) \subset \mathbb P^{15},
\end{equation}
In other words $f$ is just the morphism defined by the complete linear system $\vert H + \ring H \vert$ composed with the linear embedding $\mathbb P^{11} \subset \mathbb P^{15}$.

 \begin{proposition}  The map $p \times \mathring p$ is an embedding for a general point $[S, \mathcal L, \mathcal E] \in \mathcal F^{\perp}_{5,3}$. \end{proposition}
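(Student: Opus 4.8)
The plan is to turn the statement into a very ampleness question on $S$ and then settle it with a classical criterion together with the explicit shape of $\Pic S$. Since $\iota$ is the Segre embedding, $p\times\mathring p$ is a closed embedding if and only if $f=\iota\circ(p\times\mathring p)$ is one; and, as just observed, $f$ is the morphism attached to the complete linear system $\vert H+\mathring H\vert$ followed by the linear embedding $\mathbb P^{11}\subset\mathbb P^{15}$. Hence it is enough to prove that $\mathcal O_S(H+\mathring H)=\mathcal O_S(2C-B_{\mathsf{red}})$ is very ample. This line bundle has already been shown to be base point free, big and nef, and $(H+\mathring H)^2=8(g-1)-2t=20$ for $g=5$, $t=6$; in particular it is a genus $11$ polarization.

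Next I would apply Saint-Donat's very ampleness criterion for $K3$ surfaces: a base point free big and nef line bundle $L$ with $L^2\geq 4$ is very ample unless $(a)$ some irreducible curve $\Gamma$ has $\Gamma^2=-2$ and $\Gamma\cdot L=0$, or $(b)$ some irreducible curve $G$ has $G^2=0$ and $G\cdot L\in\{1,2\}$, or $(c)$ $L^2=8$ and $L\sim 2B$ with $B^2=2$. For $L=\mathcal O_S(H+\mathring H)$ the possibility $(c)$ is ruled out since $L^2=20$, so the whole task is to exclude $(a)$ and $(b)$; this is where the generality assumption $\Pic S\cong\mathbb Z c\oplus\mathbb M_3$ intervenes.

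For $(a)$ I would invoke the lattice computation already carried out in the proof of Lemma \ref{initial}, to the effect that for a standard $S$ the only irreducible $(-2)$-curves are $B_1,\dots,B_{12}$. Since each connected component $N_k$ of $B_{\mathsf{red}}$ is an $\mathsf A_2$ configuration, $B_{\mathsf{red}}\cdot B_i=N_k\cdot B_i=-1$ whenever $B_i\subset N_k$, so $(H+\mathring H)\cdot B_i=2\,C\cdot B_i-B_{\mathsf{red}}\cdot B_i=1>0$; thus no $(-2)$-curve is orthogonal to $H+\mathring H$ and, by Nakai--Moishezon, $H+\mathring H$ is ample. For $(b)$, given an irreducible $G$ with $G^2=0$ I would write $[G]=xc+v$, $v\in\mathbb M_3$; then $8x^2+v^2=0$ and $G\cdot c=8x\geq 0$, and $x=0$ would force $v=0$ by negative definiteness of $\mathbb M_3$, hence $x\geq 1$. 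Using $v^2=-8x^2$, $[B_{\mathsf{red}}]^2=\sum_{k=1}^{6}N_k^2=-12$ and Cauchy--Schwarz for the positive definite form $-(\,\cdot\,,\,\cdot\,)$ on $\mathbb M_3$, one gets $\vert v\cdot[B_{\mathsf{red}}]\vert\leq\sqrt{96}\,x$, whence $G\cdot(H+\mathring H)=16x-v\cdot[B_{\mathsf{red}}]\geq(16-\sqrt{96})\,x>2$. So $(b)$ is impossible, $\mathcal O_S(H+\mathring H)$ is very ample, and $p\times\mathring p$ is an embedding.

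I would expect the main obstacle to be exactly the control of the $(-2)$-curves and of the genus $1$ pencils of a $K3$ surface whose Picard lattice has rank $13$. For the $(-2)$-curves the work is essentially done already in Lemma \ref{initial}; for the elliptic pencils it is the numerical estimate above, which leaves ample room since $16-\sqrt{96}\approx 6.2$. A secondary point demanding care is the precise form of Saint-Donat's list, together with the fact---established earlier---that $\vert H+\mathring H\vert$ is base point free, so that the criterion applies with no further hypotheses.
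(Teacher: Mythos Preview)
Your argument is correct and takes a genuinely different route from the paper. The paper argues geometrically: knowing that $\vert H\vert$ and $\vert\mathring H\vert$ are non-hyperelliptic, it deduces that $f$ is birational onto a surface with only rational double points, and then rules out any contracted curve $R$ by a case analysis on the intersections $R\cdot B_i$, ultimately using that the images $p(B_1),p(B_2)$ of distinct contracted $B_i$'s are distinct points (a separate lattice check in $H^{\perp}$). Your approach bypasses all of this by reducing to very ampleness of $\mathcal O_S(H+\mathring H)$ via Saint-Donat and settling the numerical obstructions by a single Cauchy--Schwarz estimate in the negative definite lattice $\mathbb M_3$. This is considerably shorter and more transparent; the paper's argument, in return, yields a bit more geometric information about the individual maps $p$ and $\mathring p$.

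One small point to tighten in step $(a)$: the computation in Lemma~\ref{initial} is phrased for $(-2)$-classes lying in $\mathbb M_3$, i.e.\ orthogonal to $C$, rather than for all irreducible $(-2)$-curves on $S$. This is harmless, since your own Cauchy--Schwarz bound, applied with $v^2=-2-8x^2$, forces $x=0$ for any $(-2)$-class orthogonal to $H+\mathring H$, after which the cited lattice fact gives $\Gamma=B_i$ and $\Gamma\cdot(H+\mathring H)=1\neq 0$. With that one-line adjustment your proof is complete.
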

\begin{proof} The linear systems $\vert H \vert$ and $\vert \mathring H \vert$   are non hyperelliptic. Hence $p$, $\mathring p$ are generically
injective and the same is true for $f$. In particular $f: S \to f(S)$ is biregular over $f(S) - \Sing f(S)$ and $\Sing f(S)$ is a finite set of rational double points.  
Let $R \subset S$ be an integral curve contracted by $f$ then $R$ is biregular to $\mathbb P^1$ but it is not $B_i$. Indeed $R$ is contracted by $p$ and $\ring p$ while $B_i$ is not, as one can directly compute.
 Notice also that $C \sim \frac 12(H + \ring H + B_{red})$. Therefore, since $RC \geq 0$, it follows  $$ RC = \frac 12 \sum_{i = 1 \dots 12} RB_i \geq 0$$ with $RB_i \geq 0$.
Assume $RB_i = 0$ for each $i$,  then $RC=0$. Since the Picard group of $S$ is $\mathbb Z[\mathcal{L}]\oplus \mathbb{M}_3$, $R$ is necessarily contained in $\mathbb{M}_3=\mathbb{Z}[\mathcal{L}]^{\perp}$. By \cite{G} the unique $(-2)$-curves contained in $\mathbb{M}_3$ are the $B_i$'s, which contradicts the fact that $R$ cannot be a $B_i$..
Now assume that $RB_i \geq 2$ for some $B_i$ and consider, among the maps $p$ and $\mathring p$, the one not contracting $B_i$, say $p$. Then $p$ embeds $B_i$ as a line. On the other hand $p$
contracts $R \cdot B_i$, which is a divisor of degree $\geq 2$ in $B_i$: a contradiction. This implies $RB_i = 1$ for each $i$. Finally consider two distinct curves as above, say $B_1$ and $B_2$, which are 
contracted by $p$. Let us also claim that $p(B_1)$ and $p(B_2)$ are distinct points for a general $(S, \mathcal L, \mathcal E)$. Since $RB_1 = RB_2 = 1$ then $p(R)$ is not a point: a contradiction. \par
 We now prove that $p(B_1)\neq p(B_2)$ for a general $(S, \mathcal L, \mathcal E)$. If two curves are contracted by a map $p$ to the same point, there is a tree of $(-2)$-curves connecting these curves which is contracted by $p$. Since $p$ is defined by $|H|$, the $(-2)$-curves contracted by $p$ are orthogonal to $H$ in $\mathbb Z[\mathcal{L}]\oplus \mathbb{M}_3$, which is the Picard group of a general $S$. By a direct computation one observes that the negative defined lattice orthogonal to $H$ contains exactly 12 $(-2)$-classes, which are $\pm B_i$ for $i=1,\ldots ,6$. Since $B_iB_j=0$ if $i,j\in\{1,\ldots,6\}$ and $i\neq j$, $p(B_1)\neq p(B_2)$.
\end{proof}
 At this point the special geometry determined by $\mu_S$ appears, we have 
\begin{equation}
\Ker \mu_S = H^0(\mathcal I(1,1)),
\end{equation}
where $\mathcal I$ is the ideal sheaf of $\mathbb P^{11} \cdot (\mathbb P^3 \times \mathbb P^3)$ in $\mathbb P^3 \times \mathbb P^3$ and  $\dim \Ker \mu_S = 4$. Let
\begin{equation}
\Sigma := \mathbb P^{11} \cdot (\mathbb P^3 \times \mathbb P^3),
\end{equation}
then $f(S)$ sits in $\mathbb P^{11}$ as a K3 surface of degree $20$ and $f(S) \subseteq \Sigma$. Now assume that the intersection scheme $\Sigma$ is proper, then
$\Sigma$ is a K3 surface of degree $20$ and hence
 \begin{equation}
f(S) = \Sigma.
\end{equation}
Postponing its proof, we therefore assume the following claim. \medskip \par
{\sc claim} \it For a general triple $(S, \mathcal L, \mathcal E)$ the intersection scheme $\Sigma$ is proper. \rm \medskip \par
Then we prove the \it birationality of the Mukai map  $ r_{5,3}: \mathcal P^{\perp}_{5,3} \to \mathcal R_{5,3}$. \rm 
\begin{proof}[Proof of the birationality]  Since $\mathcal P^{\perp}_{5,3}$ and $\mathcal R_{5,3}$ are irreducible of the same dimension, it suffices to show that $r_{5,3}$ is birational onto 
$\mathcal M := r_{5,3}(\mathcal P^{\perp}_{5,3})$. Let $x = [S, \mathcal L, \mathcal E, C]$ be general in $\mathcal P^{\perp}_{5,3}$ and $y = r_{5,3}(x)$, then $y = [C, \eta]$ with  $\eta := \mathcal E \otimes \mathcal O_C$. Let  $y  \in \mathcal M$ be general, we prove that a unique $x = [S, \mathcal L, \mathcal E, C]$ exists so that $[C, \mathcal E \otimes \mathcal O_C]= y$.  We already know, for a general $y = [C, \eta] \in \mathcal M$, the surjectivity of the multiplication map   
$$
\mu_C: H^0(\omega_C \otimes \eta) \otimes H^0(\omega_C \otimes \eta^{-1}) \to H^0(\omega_C^{\otimes 2}),
$$
because this condition is open and non empty on $\mathcal M$. Then, applying to $\mu_C$ the same construction applied to $\mu_S$, one obtains  \begin{equation}
C \subseteq \Sigma := \mathbb P^{11} \cdot (\mathbb P^3 \times \mathbb P^3) \subset \mathbb P^{15}.
\end{equation}
Let $V = H^0(\omega_C \otimes \eta)^*$ and $\mathring V = H^0(\omega_C \otimes \eta^{-1})^*$, here $C$ is bicanonically embedded in $\mathbb P^{11} := \mathbb P({\rm Im} \ \mu_C)^*$ and the inclusion is the Segre embedding $\mathbb P(V) \times \mathbb P(\mathring V) \subset \mathbb P(V \otimes \mathring V)$.  Now the properness of $\Sigma$ is an open condition on $\mathcal M$, not empty under our claim. Then $(\Sigma, \mathcal O_{\Sigma}(1))$ is a polarized K3 surface as above. Since $y = r_{5,3}(x)$ for some $x = [S, \mathcal L, \mathcal E, C]$, the commutative diagram (\ref{NEW}) implies that $[\Sigma, \mathcal O_{\Sigma}(1)]= [S, \mathcal L]$. Therefore $\mu_C$ defines a rational map, sending  $y = [C, \eta] \in \mathcal M$ to $x \in \mathcal P^{\perp}_{5,3}$, which is inverse to $r_{5,3}$.    \end{proof}
\begin{proof}[Proof of the claim] Since each component of $\Sigma$ has dimension $\geq 2$, it suffices to construct one $\mathbb D \in \vert \mathcal O_{\mathbb P^3 \times \mathbb P^3}(1,1) \vert$ so that $\mathbb D \cdot \Sigma = \mathbb D \cdot S$. We choose the hyperplane section 
\begin{equation} \mathbb D = (P \times \mathbb P^3) + (\mathbb P^3 \times \ring P), \end{equation}
where $P$ and $\ring P$ are general planes. Then we have $\mathbb D \cdot S = D + \ring D$, where $D \in \vert H \vert$ and $\ring D \in \vert \ring H \vert$ are smooth, non hyperelliptic curves of genus $3$. We show, only for $D$, that 
\begin{equation} D = \mathbb P^{11} \cdot (P \times \mathbb P^3) \ , \ \ring D = \mathbb P^{11} \cdot (\mathbb P^3 \times \ring P). \end{equation}
The map $p: D \to P$ is the canonical map; we fix on $P$ coordinates $(x) = (x_1:x_2:x_3)$. The map
$\ring p: D \to \mathbb P^3$ is defined by $\vert \omega_D(b) \vert$,
where $\deg b = 2$ and $h^0(\mathcal O_D(b)) = 0$. This implies that $\omega_D(b)$ is very ample, we fix coordinates $(y) = (y_1: \dots: y_4)$ on $\mathbb P^3$. The resolution of $\mathcal O_{\ring p(D)}(1) \cong \omega_D(b)$ is definitely well known, \cite{Ho}. We have the exact sequence
\begin{equation}
0 \to \mathcal O_{\mathbb P^3}(-1)^{\oplus 3} \stackrel{A} \to \mathcal O_{\mathbb P^3}^{\oplus 4} \to \omega_D(b) \to 0,
\end{equation}
$A = (a_{ij})$ being a $4 \times 3$ matrix of linear forms in $(y)$. Then $\ring p(D)$ is a determinantal curve defined by the cubic minors of $A$.  In particular $A$ has rank $3$ on $\mathbb P^3 - \ring p(D)$ and, since $\ring p: D \to \ring p(D)$ is biregular and $\ring p(D)$ is smooth, it also follows that $\ring p(D)$ is the set of points $y \in \mathbb P^3$ such that $A$ has exactly rank $2$.  This  implies that the equations  $ a_{i1}x_1 + a_{i2}x_2 + a_{i3}x_3 = 0, \ i = 1 \dots 4,$ 
define a complete intersection $\hat D \subset P \times \mathbb P^3$ such that $\Supp \hat D = D$. Finally one easily computes that $\hat D$ and $D$ have the same degree $10$ with respect to $\mathcal O_{\mathbb P^3 \times \mathbb P^3}(1,1)$. This implies $\hat D = D$ and the claim follows. \end{proof} 
\subsection{\sf {The case of genus $6$}}  
\begin{theorem} \label{announce} The Mukai map $r_{6,3}: \mathcal P^{\perp}_{6,3} \to \mathcal R_{6,3}$ has not maximal rank. \end{theorem}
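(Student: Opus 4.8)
Since $\dim\mathcal P^{\perp}_{6,3}=13<15=\dim\mathcal R_{6,3}$, the map $r_{6,3}$ is of maximal rank precisely when it is generically finite onto its image; the plan is to exhibit instead a positive dimensional general fibre, so that $\dim r_{6,3}(\mathcal P^{\perp}_{6,3})\le 12$. We first note that the mechanism used above for $g\le 4$ is unavailable: for $g=6$ one has $t=6$, so (differently from the cases $g\le 4$) the multiplication map $\mu_C$ of diagram (\ref{CD}) need not be obstructed, cf. Proposition (\ref{prop1}). The obstruction to maximal rank will instead be produced by a Fano threefold ``behind the scene'', a Gushel--Mukai threefold which is singular, with transverse $\mathsf A_2$ singularities, along a rational normal sextic curve.

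The crux, and the step I expect to be the main obstacle, is an extension statement. Let $[S,\mathcal L,\mathcal E,C]\in\mathcal P^{\perp}_{6,3}$ be general, let $\overline S\subset\mathbb P^{6}$ be its singular model, carrying its six $\mathsf A_2$ points, and let $C=\overline S\cap\mathbb P^{5}$ with $\mathbb P^{5}=\langle C\rangle$. The claim is that $\overline S$ is the general hyperplane section of a threefold $\overline X\subset\mathbb P^{7}$ which is a Gushel--Mukai threefold acquiring transverse $\mathsf A_2$ singularities along a rational normal sextic curve $\Gamma$, the six $\mathsf A_2$ points of $\overline S$ being $\Gamma\cap\langle\overline S\rangle$; moreover $(\overline X,\mathcal O_{\overline X}(1),\mathcal E_X)$ should be a level $3$ polarized threefold in the sense of Definition (\ref{DEFI}), with associated cyclic triple cover $\pi:\tilde X\to\overline X$ branched exactly on $\Gamma$ and restricting over $\overline S$ to the triple cover of $(S,\mathcal L,\mathcal E)$; in particular $\mathcal E_X$ induces $\mathcal E$ on $S$ and $\eta$ on $C$. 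One should also record that a general $\overline X$ of this type has finite automorphism group and non isotrivial family of hyperplane sections.

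To establish the extension statement I would proceed as follows. First, set up the family of such Gushel--Mukai threefolds together with their order $3$ cyclic covers, producing simultaneously the branch sextic $\Gamma$ and the resolution $\tilde X$: this can be attempted either through a $\mathbb Z/3\mathbb Z$-equivariant construction starting from the polarized symplectic K3 surface $(\tilde S,\tilde{\mathcal L},\gamma)$ of genus $16$ associated with $(S,\mathcal L,\mathcal E)$, by lifting $\gamma$ to an order $3$ automorphism of a threefold $\tilde X$ with $\overline X=\tilde X/\langle\gamma\rangle$, or through an explicit parametrization inside $\mathrm{Gr}(2,5)$. Second, prove that a general $\overline S$ as above is indeed a hyperplane section of such an $\overline X$: a natural route is a computation with the Gaussian (Gauss--Wahl) map, along the lines of the extendability criteria of Zak and Lvovski, showing that the corank of the corestricted Gaussian map of $(\overline S,\mathcal O_{\overline S}(1))$ is at least two and that the resulting extension has the expected numerical invariants (degree $10$, genus $6$ K3 surface sections, six $\mathsf A_2$'s per general section, which force a singular sextic curve). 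Third, deduce from a parameter count that, conversely, the general hyperplane section of $\overline X$ is a general, hence standard, point of $\mathcal F^{\perp}_{6,3}$. This is the technical heart and will be carried out in detail elsewhere.

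Granting all of this, the conclusion is immediate. In $\mathbb P^{7}$ the linear space $\mathbb P^{5}=\langle C\rangle$ has codimension two, so the hyperplanes through it form a pencil $\ell\cong\mathbb P^{1}$, one of whose members is $\langle\overline S\rangle$. For general $t\in\ell$ the surface $S_t:=\overline X\cap H_t$ contains $C=\overline X\cap\langle C\rangle$, and its minimal resolution $\hat S_t$, endowed with the induced polarization $\hat{\mathcal L}_t$ and level $3$ structure $\hat{\mathcal E}_t$ as in Section 3, represents a point $[\hat S_t,\hat{\mathcal L}_t,\hat{\mathcal E}_t,C]\in\mathcal P^{\perp}_{6,3}$ (here one uses that the general hyperplane section of $\overline X$ is a general point of $\mathcal F^{\perp}_{6,3}$). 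Since $C\subset S_t$ we have $\hat{\mathcal E}_t\otimes\mathcal O_C=\mathcal E_X\otimes\mathcal O_C=\eta$ for every $t$, so $r_{6,3}$ takes the constant value $[C,\eta]$ along $\ell$. Because $\overline X$ has finite automorphism group, an isomorphism between $(\hat S_t,\hat{\mathcal L}_t,C)$ and $(\hat S_{t'},\hat{\mathcal L}_{t'},C)$ would be induced by one of the finitely many projective transformations of $\mathbb P^{7}$ preserving $\overline X$ and fixing $\langle C\rangle$ pointwise; hence the points $[\hat S_t,\hat{\mathcal L}_t,\hat{\mathcal E}_t,C]$ are pairwise distinct for general $t$ and describe a curve inside $r_{6,3}^{-1}([C,\eta])$ through the general point of $\mathcal P^{\perp}_{6,3}$. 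Therefore the general fibre of $r_{6,3}$ is positive dimensional, $\dim r_{6,3}(\mathcal P^{\perp}_{6,3})\le 12<13$, and $r_{6,3}$ has not maximal rank.
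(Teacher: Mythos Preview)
Your outline reaches the same endpoint as the paper --- a singular Gushel--Mukai threefold $\overline X\subset\mathbb P^7$, singular along a rational normal sextic, whose hyperplane sections are the models $\overline S$, so that the pencil through $\langle C\rangle$ produces a curve in the fibre of $r_{6,3}$ --- but the route is genuinely different. The paper does not attempt to extend $\overline S\subset\mathbb P^6$ abstractly. Instead it works with the $\vert H\vert$-model $p:S\to\mathbb P^4$, where $p(S)=F_0\cap Q$ is a $(2,3)$ complete intersection in a smooth quadric $Q$, and it builds the one-parameter family directly inside $Q$: choosing the very special element $M_0=2A+\sum_i B_i\in\vert 3H-C\vert$ (with $A$ the unique member of $\vert C-B_{\mathsf{red}}\vert$, a rational normal quartic), one gets a pencil $P_{M_0}$ of cubic sections of $Q$ through $p(C)$, each of which is again a level~$3$ K3 of genus~$6$ with six moving $\mathsf A_2$-points along $p(A)$. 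The Gushel--Mukai threefold then appears \emph{a posteriori} as the image of the rational map $\phi:Q\to\mathbb P^7$ defined by $\vert\mathcal I_{\mathsf A\vert Q}(3)\vert$, where $\mathsf A$ is the non-reduced base component supported on $p(A)$. So the paper's argument is a concrete linear-system construction in $\mathbb P^4$, and the existence and shape of $\overline X$ is an output, not an input.

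By contrast you take $\overline X$ as the starting hypothesis and propose to produce it via Gauss--Wahl / Zak--Lvovski extension theory applied to $\overline S\subset\mathbb P^6$. This is conceptually clean and, if it goes through, gives the same pencil immediately. Two cautions are worth recording. First, the standard corank criteria are formulated for smooth projective varieties; here $\overline S$ has six $\mathsf A_2$ points, so you must either run the argument on the resolution $S$ with the merely big and nef $\mathcal L$, or justify a version of the extension machinery in the presence of rational double points --- this is not routine and is exactly the ``technical heart'' you defer. Second, you need not only that $\overline S$ extends, but that the resulting $\overline X$ carries the level~$3$ structure and that its general hyperplane section is \emph{standard}; the paper gets this for free because the pencil is built inside the $\vert H\vert$-model of a general standard triple from the start. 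In short: your plan is a legitimate alternative, but the paper's explicit $\mathbb P^4$ construction sidesteps the extension-theoretic subtleties by exhibiting the pencil directly.
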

 In this paper we only sketch the proof of this theorem and its geometric motivation: see section 7 and also  \cite{Ve1}. We postpone some details to further investigation on $\mathcal R_{6,3}$. We conclude that the mentioned analogies are confirmed for $\ell = 3$:  \it the  Mukai maps 
\begin{equation}  m_{_{11 \pm 1}} \ , \ r_{_{7 \pm 1,  2}} \ , \ r_{_{5 \pm 1,  3}} \end{equation} have not maximal rank, while they are birational for $g = 11, \ 7, \ 5$. \rm These maps are not dominant  for $g = 10, \ 6, \ 4$ and they have positive dimensional fibre for $g = 12, \ 8, \ 6$.

 \section{Views on Fano threefolds with sections of level $2$ or $3$}
We close this paper discussing some families of Fano threefolds $\overline X \subset \mathbb P^{g+1}$, whose general hyperplane sections are singular K3 surfaces $\overline S$ of the considered types.
Then  $\overline S$ is endowed with a degree $\ell$ cyclic cover $\pi: \tilde S \to \overline S$ with branch locus $\Sing \overline S$. Moreover its minimal desingularization $\nu: S \to \overline S$ fits in a
standard level $\ell$ K3 surface $(S, \mathcal L, \mathcal E)$, so that $\mathcal L \cong \nu^* \mathcal O_{\overline S}(1)$ and $\mathcal E$ induces $\pi: \tilde S \to \overline S$.
We have $\ell = 2, 3$. \par
For some families a natural cyclic cover $\pi_{\overline X}: \tilde X \to \overline X$ is visible, with branch locus the curve $\Sing \overline X$. However we do not address it here.
 The existence of these families implies that $r_{g, \ell}$ has not maximal rank. They correspond to the peculiar values \begin{equation} (g, \ell) = (6,3), (6,2), (8,2), (4,3). \end{equation}
For $\ell = 2$ these families are known, \cite{FV, KLV1, Ve}. The case $(6,2)$ is revisited here with emphasis on a singular quadratic complex of the Grassmannian $G(2,5)$. This implies that $r_{6, 2}$ is not of 
maximal rank.  For $(6,3)$ we introduce a family of Gushel - Mukai threefolds singular along a rational normal sextic curve. This is responsible for the failure of the maximal rank of $r_{6,3}$.  The case $(8,2)$ is similar and not treated here, \cite{Ve}. Finally we point out the plausible relation of  the case $(4,3)$ to the $G_2$-variety. 
\subsection{ \sf A singular Gushel - Mukai threefold: $\ell = 3$ and $g = 6$} We sketch the geometric construction implying theorem (\ref{announce}). Let $g = 6$ and $\ell = 3$, keeping our notation we consider $p \times \ring p: S \to \mathbb P^4 \times \mathbb P^4$.   Then $p$  is defined by the linear system
\begin{equation}
\vert H \vert = \vert C - \frac 13 \sum_{i = 1 \dots 6} (B_i+ 2B_i') \vert,
\end{equation}
where $B_i+ B_i'$, are the connected components of $B_{\mathsf {red}}$. Let $x_0 := [S, \mathcal L, \mathcal E, C] \in \mathcal P^{\perp}_{6,3}$ be a general point, then a standard analysis shows that $p: S \to p(S)$ is the contraction of $\sum B_i$ to six points and that $p(B_i')$ is a line.  Moreover we have
\begin{equation}
p(S) = F_0 \cap Q,
\end{equation}
where $F_0$ is a cubic and $Q$ a \it smooth \rm  quadric.  Notice that  $p \vert C$ is the embedding defined by $\omega_C \otimes \eta^{-1}$,  since $CB_i = 0$ then $p(C) \cap \Sing p(S) = \emptyset$. Let $C' := p(C)$ and let
 \begin{equation}
0 \to \mathcal I_{p(S)}(3) \to \mathcal I_{p(C)}(3) \to \mathcal I_{C' \vert p(S)}(3) \to 0
\end{equation} 
be the standard exact sequence of ideal sheaves of $Q$,   we notice the isomorphisms $\mathcal I_{p(S)}(3) \cong \mathcal O_Q$ and $p_*: H^0(\mathcal O_S(3H-C)) \to H^0(\mathcal I_{p(C) \vert p(S)}(3))$. This implies that
\begin{equation}
0 \to H^0( \mathcal O_Q) \to H^0(\mathcal I_{C'}(3)) \to H^0(\mathcal O_S(3H - C)) \to 0
\end{equation}
is its associated long exact sequence. It easily follows that $C'$ is projectively normal. 
A second standard step is the remark that $\mathcal O_S(3H-C)$ is a genus $3$ polarization of $S$.  Now let $M \in \vert 3H - C \vert$, then $p_*(C+M) \in \vert \mathcal I_{p(C) \vert p(S)}(3) \vert$ and
it is cut on $p(S)$ by a cubic hypersurface. Therefore we have in $Q$ the complete intersection scheme
\begin{equation}
p_*(C + M) = F_0 \cap F_{\infty} \cap Q,
\end{equation}
where $F_0, F_{\infty}$ are cubics. Let $S'_0 = F_0 \cdot Q$ and $S'_{\infty} = F_{\infty} \cdot Q$. We consider the pencil
\begin{equation}
P_M = \lbrace S'_t, \ t \in \mathbb P^1 \rbrace,
\end{equation}
of cubic sections of $Q$ generated by $S'_0$ and $S'_{\infty}$. We can assume $p(S) = S'_0$,  notice that a general $S'_t$ is a possibly singular $K3$ surface, smooth along $C'$.
Let $\sigma_t: S_t \to S'_t$ be its minimal desingularization and $C_t := \sigma^*_t C'$, then $S_t$ is endowed with the line bundles 
\begin{equation}
\mathcal H_t := \sigma^*_t \mathcal O_Q(1),  \ \mathcal L_t := \mathcal O_{S_t}(C_t), \ \mathcal E_t := \mathcal L_t \otimes \mathcal H^{-1}_t.
\end{equation}  
For $t = 0$ the fourtuple $(S_t, \mathcal L_t, \mathcal E_t, C_t)$ defines the point $x_0 = [S, \mathcal L, \mathcal E, C]$ of $\mathcal P^{\perp}_{6,3}$. For $t \neq 0$ we have
constantly $C_t = C$. Now consider the family of fourtuples
\begin{equation}
\lbrace (S_t, \mathcal L_t, \mathcal E_t, C_t), \ t \in \mathbb P^1 \rbrace,
\end{equation}
then the assignment $t \to [\mathcal S_t, \mathcal L_t] \in \mathcal F_6$ defines a non constant rational map $m: \mathbb P^1 \to \mathcal F_6$.  Assume $(S_t, \mathcal L_t, \mathcal E_t)$ is a K3 surface of level $3$ for a general $t$. Then $m$ lifts to a map $\tilde m: \mathbb P^1 \to \mathcal P^{\perp}_{6,3}$, sending $t$ to $[S_t, \mathcal L_t, \mathcal E_t, C_t]$, and the next statement immediately follows.
\begin{proposition}  If $(S_t, \mathcal L_t, \mathcal E_t)$ is a K3 surface of level $3$ for a general $t$,  the curve $\tilde m(\mathbb P^1)$ is in the fibre at the point $[C, \eta]$ of the Mukai map $r_{6,3}$, which is therefore not of maximal rank.
\end{proposition}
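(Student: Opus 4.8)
The plan is to exhibit an explicit positive-dimensional family of $4$-tuples in $\mathcal P^{\perp}_{6,3}$ all mapping under $r_{6,3}$ to a single fixed pair $[C,\eta]\in\mathcal R_{6,3}$; since $\dim\mathcal P^{\perp}_{6,3}=13=\dim\mathcal R_{6,3}$, producing such a fibre of positive dimension already shows $r_{6,3}$ cannot be generically finite, hence not of maximal rank. The construction starts from a general point $x_0=[S,\mathcal L,\mathcal E,C]\in\mathcal P^{\perp}_{6,3}$ and uses the map $p\colon S\to\mathbb P^4$ defined by $|H|=|C-E|$. First I would establish, by the same Picard-lattice bookkeeping used repeatedly in Section 6 (using $\Pic S\cong\mathbb Z c\oplus\mathbb M_3$ and \cite{G}, \cite{N1}), that $p$ contracts exactly the six curves $B_i$ to six double points, maps each $B_i'$ to a line, and realizes $p(S)$ as a complete intersection $F_0\cap Q$ of a cubic and a \emph{smooth} quadric fourfold $Q\subset\mathbb P^4$ — i.e.\ a (singular) Gushel--Mukai K3 surface. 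Because $CB_i=0$, the image $C'=p(C)$ is disjoint from $\Sing p(S)$, and $p|_C$ is the embedding by $\omega_C\otimes\eta^{-1}$.

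Next I would analyze the cubic hypersurfaces through $C'$ on $Q$. The key cohomological input is the exact sequence
\begin{equation}
0\to H^0(\mathcal O_Q)\to H^0(\mathcal I_{C'}(3))\to H^0(\mathcal O_S(3H-C))\to 0,
\end{equation}
which follows from $\mathcal I_{p(S)}(3)\cong\mathcal O_Q$ together with the identification $p_*\colon H^0(\mathcal O_S(3H-C))\xrightarrow{\sim}H^0(\mathcal I_{C'\vert p(S)}(3))$; this also yields projective normality of $C'$. One checks $3H-C$ is a genus $3$ polarization of $S$, so $h^0(\mathcal O_S(3H-C))=3$ and there is a genuine pencil of cubic sections of $Q$ through $C'$. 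Choosing $M\in|3H-C|$ and the complete intersection $p_*(C+M)=F_0\cap F_\infty\cap Q$, I would form the pencil $P_M=\{S_t'=F_t\cap Q\}$ with $S_0'=p(S)$. Taking minimal desingularizations $\sigma_t\colon S_t\to S_t'$, pulling back $C'$ and $\mathcal O_Q(1)$, and setting $\mathcal L_t=\mathcal O_{S_t}(C_t)$, $\mathcal E_t=\mathcal L_t\otimes\mathcal H_t^{-1}$, one gets a family of $4$-tuples with $C_t\equiv C$ and $\eta_{C_t}\equiv\eta$ for all $t$, hence the associated rational map $\mathbb P^1\to\mathcal F_6$, $t\mapsto[S_t,\mathcal L_t]$, lifts to $\tilde m\colon\mathbb P^1\to\mathcal P^{\perp}_{6,3}$ whose image lies in a single fibre of $r_{6,3}$.

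The main obstacle — and the point where the argument is currently only \emph{sketched} and the genericity hypothesis is imposed rather than verified — is showing that for a general $t$ the triple $(S_t,\mathcal L_t,\mathcal E_t)$ is indeed a level $3$ K3 surface of K3 type lying in the \emph{standard} component $\mathcal F^{\perp}_{6,3}$, so that $\tilde m$ really lands in $\mathcal P^{\perp}_{6,3}$ and not merely in some larger parameter space. This requires controlling how the six $A_2$-configurations $B_i+B_i'$ degenerate along the pencil: one must check that $S_t'$ acquires precisely six $A_2$ singularities (not worse, and that they do not migrate onto $C'$), that $\mathcal E_t$ still defines a non-split degree $3$ cyclic cover birational to a K3 surface, and that the Picard lattice of the general $S_t$ is exactly $\mathbb Z c\oplus\mathbb M_3$. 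A clean way to handle this is via a Hilbert-scheme / semicontinuity argument: the stratum of $|3H-C|\cdot Q$-sections with the expected six $A_2$ points and the right lattice is open and contains $S_0'=p(S)$, which is general by hypothesis, so a general member of the pencil $P_M$ stays in it. Granting this — which is exactly the content deferred to \cite{Ve1} and "further investigation on $\mathcal R_{6,3}$" — the proposition, and with it Theorem \ref{announce}, follows at once since $\tilde m(\mathbb P^1)$ is a positive-dimensional subvariety of the fibre $r_{6,3}^{-1}([C,\eta])$ while source and target have equal dimension.
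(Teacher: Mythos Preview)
Your reconstruction of the pencil $P_M$ and the conclusion that $\tilde m(\mathbb P^1)$ sits in a single fibre of $r_{6,3}$ match the paper's approach; both treat the proposition itself (as a conditional statement) as an immediate consequence of the preceding construction, since $C_t\equiv C$ and $\mathcal E_t\otimes\mathcal O_{C_t}\equiv\eta$ for all $t$ while $m:\mathbb P^1\to\mathcal F_6$ is non-constant.

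Two corrections, however. First, a factual slip: $\dim\mathcal R_{6,3}=3\cdot 6-3=15$, not $13$; one has $\dim\mathcal P^{\perp}_{6,3}=13<15$. The conclusion survives, since a positive-dimensional fibre through a general source point still forces the image to have dimension $<13$, but your stated reason (``equal dimensions'') is wrong.

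Second, and more substantively, your proposed route to verify the hypothesis goes astray. You suggest that the locus of cubic sections of $Q$ with ``the expected six $A_2$ points and the right lattice'' is \emph{open} and contains $S'_0$, so a general member of the pencil stays in it. This fails on two counts. The model $S'_0=p(S)$ is the $|H|$-image, not the $|\mathcal L|$-image: $p$ contracts only the six curves $B_i$, so $\Sing S'_0$ consists of six \emph{nodes} ($A_1$), not $A_2$-points, each lying on the line $p(B'_i)$. More importantly, acquiring prescribed singularities is a \emph{closed} condition, not an open one; for a generic choice of $M\in|3H-C|$ the general $S'_t$ in the pencil is smooth and $(S_t,\mathcal L_t,\mathcal E_t)$ is \emph{not} a level~$3$ K3. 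This is precisely why the paper does not argue by semicontinuity but instead exhibits the very special element
\[
M_0 \;=\; 2A+\sum_{i=1}^{6}B_i,\qquad A\in\bigl|\,C-\textstyle\sum_i(B_i+B'_i)\,\bigr|,
\]
so that the base cycle of $P_{M_0}$ is $2A'+C'$ with a \emph{contact} component $2A'$. It is this non-reduced base locus that forces every $S'_t$ to remain singular, with six nodes moving along $A'$ and each sitting on a line of $S'_t$; from this one extracts the required $6A_2$ configuration on $S_t$ and the divisibility making $\mathcal E_t$ a level~$3$ structure. Your openness heuristic would not produce this, and indeed the paper's point is that the hypothesis is \emph{false} for general $M$ and only holds for the carefully chosen $M_0$.
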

The assumption mentioned in the statement depends on the choice of the element $M$ in $ \vert 3H - C \vert$ and in general it is not satisfied.   However the assumption is satisfied choosing in $\vert M \vert$ the very special element
\begin{equation}
M_0 := 2A + \sum_{i = 1 \dots 6} B_i,
\end{equation}
where $A$ is the \it unique \rm element of $\vert C -  \sum_{i = 1 \dots 6} (B_i+B_i') \vert$.  The curve $A$ is  biregular to $\mathbb P^1$ and $p \vert A$ embeds it as a rational normal quartic curve.
 Let $A' = p(A)$, then the base scheme of $P_{M_0}$ is a non reduced, complete intersection curve and its $1$-cycle is
 \begin{equation}
 p_*(M_0 + C) = 2A' + C'.
 \end{equation}
 In other words  the surfaces $S'_t$ intersect along a contact curve $A'$ of multiplicity two and along $C'$. It turns out that a general  \it $\Sing S'_t$ consists of six nodes \rm moving in $A'$  and each node belongs to a line in $S'_t$. This can be shown using the special property that $\eta \cong \omega_{C'}(-1) \in \Pic C$ is of $3$-torsion. Omitting further details of this construction, let us just say that $M_0$ defines a pencil of level $3$ and genus $6$ K3 surfaces as required. \par
 To close geometrically this sketch let $\mathsf A$ be the non reduced component, supported on $A'$, of the base curve of $P_{M_0}$ and $\mathcal I_{\mathsf A \vert Q}$ its ideal sheaf. Consider the rational map  
 \begin{equation} 
 \phi: Q \to \mathbb P^7
 \end{equation}
 defined by the  linear system $\vert \mathcal I_{\mathsf A \vert Q}(3) \vert$. Let us notice the following property.
 \begin{proposition}  The map   $\phi$ is birational onto its image $W$, which is a singular Gushel - Mukai threefold whose general hyperplane sections are singular $K3$ surfaces $\overline S$ as above. \end{proposition}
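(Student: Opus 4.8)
The plan is to analyse $\phi$ by restricting it to the members of the pencil $P_{M_0}$, where the geometry is already under control, and then to pass from these K3 surfaces to the threefold $W$.

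First I would pin down the linear system $\vert \mathcal I_{\mathsf A \vert Q}(3)\vert$. The base curve $Z$ of $P_{M_0}$ is the complete intersection $F_0\cap F_\infty\cap Q$, with $1$-cycle $2A'+C'$, so the scheme $\mathsf A$ and the curve $C'$ are linked on the arithmetically Cohen--Macaulay threefold $Q$ by the pencil of cubics. Since $-K_Q=\mathcal O_Q(3)$ one has $\omega_Z=\mathcal O_Z(3)$, and liaison yields a short exact sequence $0\to H^0(\mathcal I_{Z\vert Q}(3))\to H^0(\mathcal I_{\mathsf A\vert Q}(3))\to H^0(\omega_{C'})\to 0$, the surjectivity on the right coming from the projective normality of $C'$ established above. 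As $h^0(\mathcal I_{Z\vert Q}(3))=2$ (this is precisely the pencil $P_{M_0}$) and $h^0(\omega_{C'})=6$ because $C'$ has genus $6$, we obtain $h^0(\mathcal I_{\mathsf A\vert Q}(3))=8$, and the same liaison data show that this system has no fixed component. Hence $\phi\colon Q\to\mathbb P^7$ is well defined, with image a nondegenerate threefold $W\subset\mathbb P^7$.

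Next I would prove birationality and identify $W$. The pencil $P_{M_0}$ is a sub-pencil of $\vert \mathcal I_{\mathsf A\vert Q}(3)\vert$, so $\phi$ composed with a suitable linear projection $\mathbb P^7\to\mathbb P^1$ recovers the pencil map $Q\to\mathbb P^1$; in particular the surfaces $\phi(S'_t)$ are distinct hyperplane sections of $W$, and it suffices to show that $\phi$ is birational on a general $S'_t$. On the minimal desingularisation $\sigma_t\colon S_t\to S'_t$, the proper transform of the system cut by $\phi$ has, after removing its fixed component, moving part $\sim C_t$ --- this is the relation $M_0\in\vert 3H-C\vert$ read on $S_t$; thus $\phi\vert_{S'_t}$, pulled back to $S_t$, is the morphism of the genus $6$ polarisation $\mathcal L_t=\mathcal O_{S_t}(C_t)$, which contracts the six $A_2$-configurations of $S_t$ and is otherwise very ample, hence is birational onto the singular model $\overline S_t$ of degree $C_t^2=10$, i.e. onto the singular model of a standard level $3$ K3 surface of genus $6$. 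It follows that $\phi$ is birational onto $W$, that $\dim W=3$, and that each $\overline S_t$ lies in $\vert \mathcal O_W(1)\vert$, whence $\deg W=\deg\overline S_t=10$ and $W\subset\mathbb P^7$. Adjunction along the surfaces $\overline S_t$, which sweep out $W$, gives $(K_W+H)\vert_{\overline S_t}=K_{\overline S_t}=0$, so $-K_W=H$: $W$ is a singular prime Fano threefold of index $1$, degree $10$ and genus $6$ in $\mathbb P^7$, that is, a singular Gushel--Mukai threefold. Its general hyperplane section is the $\phi$-image of a general cubic section of $Q$ double along $A'$, hence again a singular K3 surface of the type considered. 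Finally, the six singular points of $\overline S_t$ are the $\phi$-images of the six $A_2$-configurations of $S_t$, which all meet the strict transform of $A'$, while $A'$ lies on every $S'_t$; hence as $t$ varies these points sweep out the fixed curve $\phi(A')$, and one checks that $\phi$ maps $A'$ onto a rational normal sextic, so $\Sing W$ is this sextic.

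The hard part is the scheme-theoretic bookkeeping along $A'$: one must control the infinitely near structure of $\mathsf A$ --- the fact that $P_{M_0}$ is a genuine contact pencil along $A'$, carrying the six moving singular points and the six lines through them --- both in order to pin down $h^0(\mathcal I_{\mathsf A\vert Q}(3))=8$ and the fixed component of the restricted system on the nose, and in order to describe $\phi$ near $A'$, that is, to resolve it by a single blow-up and read off the exceptional behaviour. A second point, routine but still requiring care, is to certify that $W$ is a genuine Gushel--Mukai threefold --- normal, linearly normal in $\mathbb P^7$, not a cone, and of the expected projective type --- rather than a degeneration of one, and that its singular locus is exactly the rational normal sextic $\phi(A')$.
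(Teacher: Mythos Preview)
The paper does not prove this proposition. The whole discussion of the case $(g,\ell)=(6,3)$ is explicitly labelled a sketch (``In this paper we only sketch the proof of this theorem \dots\ We postpone some details to further investigation on $\mathcal R_{6,3}$''), and after stating the proposition the text only adds that $W$ is a $(1,1,2)$ complete intersection in $G(2,5)$ with $\Sing W$ a rational normal sextic, before declaring ``This completes our sketch.'' So there is no proof in the paper to compare your attempt against.

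That said, your outline is a sound plan and matches the geometry the paper sets up: the liaison computation giving $h^0(\mathcal I_{\mathsf A\vert Q}(3))=8$; the observation that the pencil $P_{M_0}$ sits inside $\vert \mathcal I_{\mathsf A\vert Q}(3)\vert$, so that $\phi(S'_t)$ are hyperplane sections of $W$; the identification of $\phi\vert_{S'_t}$ (after pulling back to $S_t$ and stripping the fixed part) with the genus $6$ polarisation map via $3H\sim C+M_0$; and the adjunction reading of $-K_W$. You also correctly flag where the real work lies, namely the scheme structure of $\mathsf A$ along $A'$ and the resolution of $\phi$ there --- precisely the ``details'' the paper defers. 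One small correction: in your liaison short exact sequence the surjectivity on $H^0$ follows from $H^1(\mathcal I_{Z\vert Q}(3))=0$, which holds because $Z$ is a complete intersection on $Q$ and hence arithmetically Cohen--Macaulay; the projective normality of $C'$ established earlier in the paper is not what is being used at that step.
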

Therefore $W$ is a complete intersection of type $(1,1,2)$ in the Grassmannian $G(2,5)$. We notice that $\Sing W$ is a rational normal sextic curve. This completes our sketch.

 \subsection{{ \sf The tangential quadratic complex of $\mathbb P^4$: $\ell = 2$ and $g = 6$}}
Let $\mathbb G_n$ be the Pl\"ucker embedding of the Grassmannian of lines of $\mathbb P^n$, a quadratic complex is just a quadratic section of $\mathbb G_n$. Let $Q \subset \mathbb P^n$ be
a quadric, then the family $\mathbb T$ of tangent lines to $Q$ is a quadratic complex, named sometimes the \it tangential quadratic complex. \rm We assume $Q$ is smooth, then $\mathbb T$ is a Fano variety.    Notice that $\Sing \mathbb T$ is the Hilbert scheme of lines of $Q$, of codimension and multiplicity $2$ in $\mathbb T$. \par
Now we assume $n$ is even. Then $\mathbb T$ has a unique  nontrivial   quasi \'etale 2:1 cover
\begin{equation}
\pi: \tilde {\mathbb T} \to \mathbb T,
\end{equation}
whose branch locus is $\Sing \mathbb T$. Let us describe the known map $\pi$ in the case $n = 4$, since it is linked to the Mukai map $r_{6,2}: \mathcal P^{\perp}_{6,2} \to \mathcal R_6$ and its behavior. This is treated in \cite{FV}. 
For $n = 4$ the Hilbert scheme of lines of $Q$ is the $2$-Veronese embedding of $\mathbb P^3$, say
\begin{equation}  V \subset \mathbb G_4 \subset \mathbb P^9. \end{equation}
Let $t \in \mathbb T$, consider the pencil $\lbrace H_p, p \in t \rbrace$, where $H_p$ is the  polar hyperplane to $Q$ at $p$. Its base locus is a plane $P_t$ and $Q_t := P_t \cdot Q$ is a conic. Since $t$ is tangent to $Q$, a standard exercise shows that $\Sing Q_t = t \cap Q$. This defines a smooth, integral correspondence
\begin{equation} \tilde {\mathbb T} := \lbrace (t,r) \in \mathbb T \times V \ \vert \ r \subset Q_t \rbrace. \end{equation}
Notice that its projection onto $\mathbb T$ is a quasi \'etale $2:1$ cover branched on $V$, say  
\begin{equation}
\pi: \tilde {\mathbb T} \to \mathbb T.
\end{equation}
Indeed the fibre $\zeta_t := \pi^*(t)$ is the Hilbert scheme of lines of $Q_t$ and is finite of length $2$.  Then  $\zeta_t$ is smooth iff $\rank Q_t = 2$ iff $t \notin V$ and $\zeta_t$ has multiplicity $2$ iff $\rank Q_t = 1$ iff
$t \in V$. \par Now it is well known that a general $2$-dimensional linear section $ \overline S = \mathbb T \cap \mathbb P^6$ is the model defined by $\vert \mathcal L \vert$ of $S$,  where $[S, \mathcal L, \mathcal E] \in \mathcal F^{\perp}_6$ is general. In particular $\Sing \overline S = V \cap \mathbb P^6 $ is an even set of $8$ nodes, defining $\pi \vert \tilde S$ with $\tilde S = \pi^{-1}(\overline S)$, cfr. \cite{FV, KLV1, KLV2}. For
$\ell = 2$ and $[S, \mathcal L, \mathcal E] \in \mathcal F^{\perp}_g$, the surface $S$, or its model $\overline S$, is known as a standard Nikulin surface of genus $g$. Therefore we can say that a general $3$-dimensional linear section of $\mathbb T$ is \it a Fano threefold whose hyperplane sections are standard Nikulin surfaces of genus $6$. \rm Let us denote such a section  by 
\begin{equation}
X = \mathbb T \cap \mathbb P^7,
\end{equation}
notice that $\Sing X$ is a curvilinear section of $V$, hence an elliptic curve of degree $8$. \par
Finally let $\mathcal C$ and $\overline {\mathcal S}$ respectively be the family of general curvilinear sections $C$ and that of general $2$-dimensional linear sections $\overline S$ of $\mathbb T$.
Consider the family of pairs
\begin{equation}
\mathcal P := \lbrace(C, \overline S) \in \mathcal C \times \overline {\mathcal S} \ \vert \ C \subset \overline S \rbrace.
 \end{equation}
Let $(C, \overline S) \in \mathcal P$ then $C$ is a canonical curve and $ C \in \vert \mathcal O_{\overline S}(1) \vert$. Let $\nu: S \to \overline S$ be the desingularization then $\nu^*C \in \vert \mathcal L \vert $ and $\eta := \mathcal E \otimes \mathcal O_{\nu^*C}$ defines $\pi \vert \tilde C$, where $\tilde C = \pi^{-1}(C)$. Then the assignment of $(C, \overline S)$ to $[S, \mathcal L, \mathcal E, \nu^*C]$ defines a \it dominant \rm  rational map
 $$ m: \mathcal P  \to \mathcal P^{\perp}.$$
We already know  that the Mukai map $r_{6,2}$ fails to be of maximal rank. However we can now see this fact from a geometric
perspective: the existence of the Fano variety $\mathbb T$ and its quasi finite $2:1$ cover $\pi$.  Indeed this implies that $C \in \mathcal C$ is contained in a higher dimensional family of sections
$\overline S$  of $\mathbb T$, so that $C$ cannot have general moduli. \par More precisely  the parameter space $\mathcal C$ is open in the Grassmannian $G(5, 9)$, hence $\dim \mathcal C = 24$.  Moreover $\Aut Q \subset \Aut \mathbb P^4$ has dimension $10$ and acts faithfully on $\mathcal C$. Then we have $\dim \mathcal C \dslash \Aut Q = 14 < \dim \mathcal R_6 = 15$. Hence $r_{6,2}$ cannot be dominant.  \begin{remark} \rm Let $C \in \mathcal  C$ then $\tilde C = \pi^{-1}(C)$ is a smooth, integral curve of genus $11$. We have $\tilde C \subset \tilde S \subset \tilde X \subset \mathbb P^{12}$, where $\tilde X = \pi^{-1}(X)$ is a non prime Fano threefold of genus $11$. We just mention that $\tilde C$ is the base locus of a pencil of hyperplane sections of $\tilde X$ and that the birational Mukai map $m_{11}: \mathcal P_{11} \to \mathcal M_{11}$ is not invertible at $[\tilde C]$.
\end{remark}
  \subsection{ { \sf The $\mathsf G_2$-variety: $\ell = 3$ and $g = 4$} }
  A \it geometric interpretation \rm seems plausible and it is possibly postponed to future work. It relates to the failure of the Mukai map in genus $10$.
As in (\ref{diagram}) let $\pi: \tilde S \to \overline S$ be the cover induced by $\mathcal E$ and $\nu: S \to \overline S$ the desingularization map.
For a general $C$ the map $\nu: C \to \overline S \setminus \Sing \overline S$ is an embedding, then we set $C := \nu(C)$. Let $\tilde C := \pi^{-1}(C)$ then $(\tilde S, \mathcal O_{\tilde S}(\tilde C ))$ is a K3 surface of genus $10$. This suggests that $\tilde S$ embeds in the $G_2$-variety $W \subset \mathbb P^{13}$ as a linear section, \cite{Mu}. Now a general curvilinear section of $W$ is not general as a genus $10$ curve. In the same way, if it is a triple cover of a genus $4$ curve, it seems not a general genus $4$ triple cover.  
{\tiny
     }

 \end{document}